\newtheorem{thm}{Theorem}[section]
\newtheorem{lemma}[thm]{Lemma}
\newtheorem{cor}[thm]{Corollary}
\theoremstyle{definition}
\newtheorem{defin}[thm]{Definition}
\newtheorem{rem}[thm]{Remark}
\newcommand{\qedwhite}{\hfill \ensuremath{\Box}}
\renewenvironment{proof}{{\raggedright \bfseries Proof.}}{\qedwhite}
\numberwithin{equation}{section}
\renewcommand{\arraystretch}{1.5}
\def\NN{\mathbb N}
\def\ZZ{\mathbb Z}
\def\RR{\mathbb R}
\def\Chi{\mbox{\large$\chi$}}
\begin{document}
	
	\leftline{ \scriptsize \it  }
	\title[]
	%	{Approximation with Sigmoid-Activated Kantorovich Neural Operators}
	%	{Approximation of kantorovich type operators activated by sigmoidal function }
%	{Approximation by Neural Network Operators in Mixed norm spaces: Theory and applications}
{Neural Network Approximation in mixed norm Spaces with applications in Image Processing}
%{Constructive Approximation by Neural Network Operators in mixed norm Spaces: Theory and Practical Implications}
	\maketitle
	
	\begin{center}
		{\bf Priyanka Majethiya,}
		{\bf Shivam Bajpeyi\footnote{corresponding author\\
				Email: shivambajpai1010@gmail.com, shivambajpeyi@amhd.svnit.ac.in }}
		%		{\bf Dhiraj Patel}\\
		
		\vskip0.2in
		Department of Mathematics, Sardar Vallabhbhai National Institute of Technology Surat, Gujarat-395007\\
		%		Department of Computer Science, RWTH Aachen, Germany.\\
		
		\verb" priyankamajethiya2000@gmail.com, shivambajpai1010@gmail.com" 
	\end{center}
	
	\begin{abstract}
		%	In the present article, we discuss the approximation capabilities of prominent  family of Kantorovich deep neural network operators in the multivariate case. 
		%The framework of mixed norm spaces has instigated a pivotal role in harmonic analysis.
		On the one hand, the framework of mixed norm spaces has potential applications in different areas of mathematics. On the other hand,
		neural network (NN) operators are well established as approximators, attracting significant attention in the fields of approximation theory and signal analysis.
		% The study of function approximation through NN operators has been significant in approximation theory and image processing. 
		In this article, we aim to integrate both these concepts. 
		Here, we analyze multivariate Kantorovich-type NN operators as approximators based on sigmoidal activation function within various mixed norm structures, namely mixed norm Lebesgue spaces $L^{\overrightarrow{\mathscr{P}}}([-1,1]^n),$ where $\overrightarrow{\mathscr{P}}=(p_1,...,p_n),$ and mixed norm Orlicz spaces $L^{\overrightarrow{\Phi}}([-1,1]^n)$, where $\overrightarrow{\Phi}=(\phi_1,...,\phi_n)$. The Orlicz spaces consist of various significant function spaces, such as  classical \textit{Lebesgue spaces,  Zygmund spaces and Exponential  spaces}. We establish the boundedness and convergence of multivariate Kantorovich-type NN operators in these mixed norm function spaces. 
		%Furthermore, we establish the convergence of these operators in the proposed framework.
		%  To prove these result, a denisty theorem has been establish within $L^{\overrightarrow{\Phi}}([a,b]^n).$
		% Besides, we provide these results for multivariate Kantorovich two-layer neural network operators. 
		%  Later, we extend these definitions and results to multivariate Kantorovich deep neural network operators. 
		%		In the aforementioned NN operators, we utilize the class of sigmoidal functions as activation functions. 
		%		due to its importance in various fields, such as solving Volterra integral and integro-differential equations using the collocation method.
		At the end, the approximation abilities are illustrated through graphical representations and error-estimates along with
		%		  to exhibit the approximation capabilities. In addition, we present an
		application in image processing, including image reconstruction, image scaling, image inpainting and image denoising. Finally, we address the beneficial impact of mixed norm structure on aforementioned image processing tasks.\\
%		Along with this, the application of the mixed norm structure is also demonstrated.\\
		%		 along with applications in image processing.\\
		
		%Specifically, neurocomputing processes are usually applied to high-dimensional data leading many theoretical studies to focus on approximation rates in multidimensional contexts. In various models, neural networks are essential, and the activation function is usually a sigmoidal function, which significantly helps in  In the final section, we outline several examples of sigmoidal-type activation functions for which the above theoretical framework is valid.\\
		%	 This allows us to prove approximation results constructively, where all weights, thresholds, and coefficients of deep neural network operators can be explicitly determined for any function $f.$
		
		%	(good lines)Function approximation, which finds the underlying relationship from a given finite input-output
		%	data is the fundamental problem in a vast majority of real world applications, such as prediction, pattern
		%	recognition, data mining and classification. Various methods have been developed to address this problem,
		%	where one of them is by using artificial neural networks.In short, the main concern is to minimize the
		%error function. In the other words, to enhance the
		%accuracy of the estimation is the principal objective
		%of function approximation.  There exist multiple methods that have been
		%established as function approximation tools, where
		%an artificial neural network (ANNs) is one of them

		\vskip0.001in
		
		\noindent \textbf{Keywords:}  Neural networks approximation; Sigmoidal functions; mixed norm spaces; Orlicz spaces.\\
		
		\vskip0.001in
		\noindent \textbf{Subject Classification (2020): 41A25, 41A35, 41A05, 94A12} 
	\end{abstract}
	\section{introduction} \label{1}
	
%	In approximation theory, the process of sampling and reconstruction plays a crucial role in translating continuous functions into discrete sequences, thereby bridging the gap between analog and digital representations.
	Sampling and reconstruction are essential processes that convert continuous functions into discrete sequences, thereby effectively bridging the gap between analog and digital representations. In this direction,
%	 facilitating the analysis and reconstruction of continuous signals from discrete data.
the Whittaker–Kotel’nikov–Shannon (WKS) theorem \cite{noise} is a foundational result concerning the reconstruction of band-limited functions from their sampled values  on a uniform grid. Specifically, it states that if a function \( f : \mathbb{R} \rightarrow \mathbb{R} \) is square integrable, i.e., \( f \in L^2(\mathbb{R}) \), and band-limited with its Fourier transform 
	vanishing outside \( [-\pi \omega, \pi \omega] \), then $f$ can be completely reconstructed by the formula
	\[
	f(x) = \sum_{k \in \mathbb{Z}} f\left(\frac{k}{\omega}\right) \, \emph{sinc} \left(  \omega x - k \right),  
	\]
	where $\emph{sinc}(x) :=\frac{\sin \pi x} {\pi x}$ for $x\neq 0$ and $\emph{sinc}(0)=1$. Several significant developments related to the Whittaker-Kotelnikov-Shannon sampling theorem and its various generalizations are documented in \cite{jerri}. The Shannon Sampling Theorem has made a significant contribution to the field of function approximation and has remained a foundational tool in signal analysis and reconstruction over the years \cite{siam}.
	\par
	
%	Simultaneously, the mathematical study of neural networks forms the groundwork for
%	understanding their theoretical capabilities.
	%	 The primary goal of function approximation is to minimize the error function.
%In this direction,	Artificial neural networks (ANNs) are widely recognized for their outstanding proficiency in function approximation.
	%	  They are also fundamental in computational software and electronic systems, receiving significant attention for their versatility and diverse applications.
	Nowadays, Artificial Neural Networks (ANNs) have gained significant attention for their outstanding proficiency in function approximation. Among various architectures of ANN, the feedforward neural network (FNN) is one of the most commonly studied.
	A feedforward neural network (FNN) consisting of a set of processing elements known as neurons, is recognized as one of the most important architectures in ANN and is identified as a \textit{universal approximator}. 
	
	\begin{thm} \textit{(Universal Approximation Theorem, \cite{pinkus}).} 
		Let $n \geq 1$ be a dimension and $\rho : \mathbb{R} \rightarrow \mathbb{R}$ continuous. Then the following are equivalent:
		
		\begin{itemize}
			\item[(a)] For any compact $A \subset \mathbb{R}^n$, any continuous $f : A \rightarrow \mathbb{R}$ and any $\delta > 0$, there exists a network with one hidden layer and activation function $\rho $, representing $g : \mathbb{R}^n \rightarrow \mathbb{R}$, such that for all $x \in A$,
			\[
			|f(x) - g(x)| \leq \delta.
			\]
			
			\item[(b)] $\rho $ is not a polynomial.
		\end{itemize}
	\end{thm}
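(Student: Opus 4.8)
The plan is to prove the two implications separately: the direction (a)$\Rightarrow$(b) is routine, while (b)$\Rightarrow$(a) is the substantive one and will be obtained by reducing to one variable and then removing a smoothness assumption by mollification. For (a)$\Rightarrow$(b) I would argue by contraposition: if $\rho$ is a polynomial of degree $d$, then for all $w\in\mathbb{R}^n$, $b\in\mathbb{R}$ the map $x\mapsto\rho(w\cdot x+b)$ is a polynomial in $x$ of degree $\le d$, so every one-hidden-layer network $g(x)=\sum_i c_i\rho(w_i\cdot x+b_i)$ lies in the space $\mathcal{P}_d$ of polynomials of degree $\le d$ on $\mathbb{R}^n$. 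Taking $A$ to be a closed ball, $\mathcal{P}_d$ is a finite-dimensional, hence closed, proper subspace of $C(A)$, so a function such as $x\mapsto x_1^{d+1}$ cannot be approximated within a small enough $\delta$; thus (a) fails.

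For (b)$\Rightarrow$(a) the first step is a reduction to dimension one. I would use that the ridge class $\mathcal{R}=\operatorname{span}\{x\mapsto h(a\cdot x):a\in\mathbb{R}^n,\ h\in C(\mathbb{R})\}$ is dense in $C(A)$ for every compact $A$: expanding $(a\cdot x)^k=\sum_{|\alpha|=k}\binom{k}{\alpha}a^\alpha x^\alpha$ and letting $a$ vary shows $\mathcal{R}$ contains every homogeneous polynomial of degree $k$, hence all polynomials, so density follows from the Weierstrass theorem. Hence it suffices to show that, for $\rho$ not a polynomial, the univariate span $\mathcal{N}_\rho=\operatorname{span}\{t\mapsto\rho(\lambda t+\theta):\lambda,\theta\in\mathbb{R}\}$ is dense in $C(\mathbb{R})$ uniformly on compact sets: given $f$ on $A$ and $\delta>0$, approximate $f$ within $\delta/2$ by $\sum_{j=1}^m h_j(a_j\cdot x)$, approximate each $h_j$ within $\delta/(2m)$ on the compact interval $\{a_j\cdot x:x\in A\}$ by an element of $\mathcal{N}_\rho$, and substitute $t=a_j\cdot x$ to obtain a genuine network (weights $\lambda_{ij}a_j$, biases $\theta_{ij}$) that is $\delta$-close to $f$.

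The univariate density itself splits into two sub-steps. \emph{Smooth case:} if in addition $\rho\in C^\infty(\mathbb{R})$, then working in $C([-M,M])$ for arbitrary $M>0$, the $k$-th order divided differences in the dilation parameter $\lambda$ of $t\mapsto\rho(\lambda t+\theta)$ are finite linear combinations of terms $\rho((\lambda+jh)t+\theta)$ and hence lie in $\mathcal{N}_\rho$, while converging, as the step tends to $0$, to $t^k\rho^{(k)}(\lambda t+\theta)$ uniformly on $[-M,M]$; thus $t^k\rho^{(k)}(\lambda t+\theta)\in\overline{\mathcal{N}_\rho}$. Since $\rho$ is not a polynomial there is for each $k$ some $\theta_k$ with $\rho^{(k)}(\theta_k)\neq0$, and setting $\lambda=0$ gives $t^k\in\overline{\mathcal{N}_\rho}$ for all $k$, whence $\overline{\mathcal{N}_\rho}=C(\mathbb{R})$ by Weierstrass. \emph{General case:} for $\psi\in C_c^\infty(\mathbb{R})$ the mollification $\rho_\psi:=\rho*\psi$ is $C^\infty$, and $\rho_\psi(\lambda t+\theta)=\int\rho(\lambda t+\theta-s)\psi(s)\,ds$ is a limit---uniform on compacta---of Riemann sums built from terms $\rho(\lambda t+(\theta-s_i))$, so $\mathcal{N}_{\rho_\psi}\subseteq\overline{\mathcal{N}_\rho}$. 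One then checks that some $\rho_\psi$ is not a polynomial: if $\rho*\psi$ were a polynomial for every $\psi$, a Baire-category argument applied to the closed subspaces $\{\psi:\deg(\rho*\psi)\le m\}$ (over test functions with a fixed compact support) would produce one $m$ good for all $\psi$, so $\rho*\psi^{(m+1)}=(\rho*\psi)^{(m+1)}=0$ for all $\psi$, forcing the distributional derivative $\rho^{(m+1)}$ to vanish and $\rho$ to be a polynomial---a contradiction. For such a $\psi$, the smooth case gives $\overline{\mathcal{N}_{\rho_\psi}}=C(\mathbb{R})$, hence $\overline{\mathcal{N}_\rho}=C(\mathbb{R})$, and together with the reduction step this yields (a).

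I expect the main obstacle to be the last point of the general case: proving that not every mollification of $\rho$ is a polynomial is precisely where the hypothesis ``$\rho$ is not a polynomial'' is used in full force, and it seems to require a soft functional-analytic (Baire) argument together with the distributional interpretation of $\rho^{(m+1)}=0$; one must also be slightly careful with topologies, e.g.\ restricting to test functions with a fixed compact support so as to work in a Fréchet (hence Baire) space. Everything else is routine manipulation around the Weierstrass approximation theorem and the ridge-function reduction.
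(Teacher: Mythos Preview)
The paper does not prove this theorem: it is stated in the introduction purely as background, with a citation to Pinkus \cite{pinkus}, and no argument is given. There is therefore no ``paper's own proof'' to compare against.

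That said, your proposal is a correct and complete outline of the standard proof as it appears in the cited reference. The contraposition for (a)$\Rightarrow$(b), the ridge-function reduction to one variable, the divided-difference argument in the smooth case to extract monomials, and the mollification plus Baire-category step to handle merely continuous $\rho$ are exactly the ingredients of the Leshno--Lin--Pinkus--Schocken/Pinkus proof. Your identification of the Baire argument (showing some $\rho*\psi$ is not a polynomial) as the only genuinely delicate point is also accurate.
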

	The efficiency of neural networks in approximating or learning any functions  has made significant advancements in recent years in various domains such as approximation theory, signal analysis and image processing  \cite{applkadak,kadakapl,kadakfuzzy, imgpn}.

	Neural network (NN) operators developed  from the mathematical formulation of FNNs, provide a constructive method to approximate target functions utilizing suitable activation functions.   
	%			\flushbottom
	One significant advantage of using such operators is that they are well-recognized as effective approximators, as all components, including weights, thresholds, and coefficients, of a FNN can be explicitly determined.
	The mathematical formulation of FNNs can be expressed as follows. Consider  \( \rho : \mathbb{R} \rightarrow \mathbb{R} \) as an activation function, \( n \) inputs, one output, and one hidden layer with \( m \) neurons can be mathematically expressed as
	\begin{equation}
		K_{n,\rho}(\mathbf{x}) = \sum_{i=1}^{m} c_i \hspace{1pt}\rho(\mathbf{w}_i \cdot \mathbf{x} + b_j), \quad \mathbf{x} = (x_1, x_2, \dots, x_n) \in \mathbb{R}^n, \quad n \in \mathbb{N},
	\end{equation}
	where \( c_i \in \mathbb{R}  \) are the coefficients,  \( b_i \in \mathbb{R} \) are the thresholds and \( \mathbf{w}_i = (w_{i1}, w_{i2}, \dots, w_{in}) \in \mathbb{R}^n \) are the weights and \( \left(\mathbf{w}_i \cdot \mathbf{x}\right) \) denotes the inner product of $\mathbf{w}_i$ and $\mathbf{x}.$   
	\par 
	In recent years, numerous significant results have been established on various topics in FNNs, including density results \cite{Agrawal, BA1, spigler,NN}, quantitative estimations that describe the approximation order \cite{Anas1,Anas2,Anas3, sainz}, and the complexity of approximation, which characterizes the internal complexity of neural networks \cite{spigler, barron, Funahashi, pinkus}.
	%	In recent years, the approximation abilities of shallow neural networks (SNNs) have been broadly investigated. 
	The foundational work of Cybenko \cite{cybenko} and Funahasi \cite{Funahashi} initiated the mathematical framework for function approximation by FNNs and derived the approximation results for continuous as well as measurable functions. The proof of Cybenko result used the tool of Riesz Representation theorem and Hahn-Banach theorem. Building on Cybenko's framework,
	Hornik \cite{Hornik} employed  Fourier analysis in groups and Sobolev space to develop $L^p(\mu)$ and derivative approximation theorems. 
	% the universality with simultaneous approximation by SNNs. 
	% Barron \cite{barron} established the universal approximation result for SNNs utilizing sigmoidal activation functions.
	Moreover, Leshno et al. \cite{Leshno} demonstrated that FNNs exhibit universality for any non-polynomial activation function. The density of FNNs in the space of continuous functions is established by a measure-theoretic approach in \cite{mes}. A topological neural network (TNN) is developed in \cite{top} based on a strong universal approximation theorem for Tychonoff spaces.  Noteworthy developments in this direction can be found in \cite{pinkus,Anas3,Anas2,2, NN}. 
	\par 
	%		However, such universal approximation results only ensure the existence of SNNs, and do not provide any constructive approach for approximation procedure.
	%These universal approximation results only demonstrate the existence of FNNs.
	These theoretical results on universal approximation are limited to proving the existence of FNNs.
%	The constructive approximation procedure was first vastly studied in the groundbreaking  work of Cardaliaguet and Euvrard \cite{CE}. 
	  The foundational work of Cardaliaguet and Euvrard \cite{CE} introduced a constructive approach that demonstrates sequences of bell-shaped and squishing operators (\ref{e1}) through the approximations of continuous and bounded functions is defined as follows
	\begin{equation}\label{e1}
		f_n(x) = \displaystyle\sum_{k=-n^2}^{n^2} \frac{f\left(\frac{k}{n}\right)}{I \cdot n^\alpha} \hspace{5pt}
		b \left( n^{1-\alpha} \left( x - \frac{k}{n} \right) \right) ,
	\end{equation}
	where 
	\begin{equation*}
		I = \int_{-\infty}^{+\infty} b(t) \, dt, \quad 0 < \alpha < 1.
	\end{equation*}
	%  They showed that sequences of bell-shaped and squishing operators effectively as approximators for continuous, bounded functions. 
	Building on this foundation, various neural network (NN) operators have been developed.
	%  Anastassiou made significant developments in this direction by obtaining quantitative approximation results for certain activation functions in \cite{Anas1,Anas2,Anas3}. 
	Cao and Zang \cite{cao11} constructed a Feedforward neural networks (FNNs) and derived bounds for its approximation error in the $L^p-$setting. Llanas and Sainz \cite{sainz} examined FNNs with interpolation and approximation properties. 
%	Costarelli and Spigler \cite{NN} initiated the study of NN operators for genreal class of activation functions and established important approximation results. 
	Moreover, in \cite{2}  Costarelli introduced a neural network interpolation operator and employing a wide range of smooth univariate activation function within the neural network operator.
	% The Kantorovich-type modifications of these NN operators were thoroughly investigated in \cite{spigler,kantorlicz}. 
Kadak contributed substantially to the development of NN operators and their applications in areas such as image processing and signal analysis \cite{applkadak,kadakapl,kadakfuzzy}.  Qian and Yu \cite{qyu} investigated the approximation properties  of NN operators for particular activation functions. Recently, Yu and Cao \cite {Yu} constructed suitable FNNs utilizing sigmoidal function satisfying certain asymptotic behaviours. 	In recent years, there has been considerable interest in constructing effective NN operators, see \cite{cao11,Anas2,WYG,BA1, Agrawal, q1, cao2} for details.

  Further, Costarelli and Spigler \cite{NN} pioneered the study of bounded functions through a family of neural network (NN) operators induced by a general class of activation functions. Subsequently, the same authors extended this study by replacing the sample values with an average of the signal \( f \) over an interval containing \( k/n \),  leading to a Kantorovich-type variant of the operator. These operators are of considerable interest, as from a practical standpoint, information is often better captured in a small region around a point rather than at the point itself, thereby  reducing jitter-errors.
%	Moreover, they also obtained better rate of approximation for $C^{1}-$functions using a moment-type approach.
	%	\raggedbottom 
	%\setlength{\parskip}{0pt} 
	 Costarelli et al.\cite{spigler} studied the convergence of a family of multivariate Kantorovich-type NN operators, which is defined as
	\begin{equation}\label{intr}
		K_n(f,\textbf{x})= \frac{\displaystyle\sum_{\textbf{k}=-n}^{n-1}  \left(n^r\int_{I_{\textbf{k},n}}f(\textbf{t})d\textbf{t}\right)\phi_{\rho}(n\textbf{x}-\textbf{k})}{\displaystyle\sum_{\textbf{k}=-n}^{n-1} \phi_{\rho}(n\textbf{x}-\textbf{k})}.
	\end{equation}
	\par
	
	% various approximation methods have been introduced utilizing various neural network operators activated by sigmoidal functions.
	%		Advancing the insights established in previous studies, 
	Building on this development, we analyze the approximation abilities of \textit{multivariate Kantorovich-type NN operators} based on the mathematical formulation of feedforward neural networks. We will examine the approximation properties of these operators within the framework of mixed norm function spaces, specifically mixed norm Lebesgue and Orlicz spaces. Along with this, we explore the applications of these operators in image processing tasks such as image reconstruction, image inpainting, image denoising, image scaling. Subsequently, we discuss the application of mixed norm structures in both signal and image processing contexts.
	
	%		Following this, we provide our framework to NN operators with $m$- hidden layers, 
	%This note primarily focuses on the approximation behavior of a family of multivariate Kantorovich NN operators utilizing sigmoidal function as activation function.
	\par
	%		To establish the general setting, the theory of Orlicz spaces is recalled, leading to the study of a reconstruction theorem for non-continuous functions using  multivariate Kantorovich NN operators.
	The Orlicz spaces provide a general framework that encompass classical $L^p$-spaces and various other significant spaces, such as \textit{Exponential spaces} and \textit{Zygmund spaces} (see Section \ref{2}).
	The theory of Orlicz space was established by W. Orlicz in \cite{orl}. 
	%		Although the foundational work on Orlicz spaces can be found in \cite{orlicz}, 
	The  rigorous theoretical foundation of Orlicz spaces was introduced by Rao and Ren in \cite{Raobook,13}. The class of function spaces constructed from Orlicz spaces has shown considerable applicability across several mathematical domains, notably in functional analysis and operator theory \cite{log2,exp}. For a through detailed analysis on fundamental applications of Orlicz spaces in various areas of mathematics, see \cite{orliczapp, Raobook}. Moreover, some significant advancements of Orlicz spaces in the field of approximation theory are detailed in \cite{k2007,nn,AAmine, vayeda}. Hence, Orlicz spaces provide a unified approach to study the multivariate Kantorovich NN operators over various functional settings. Specifically, we focus on mixed norm function spaces due to their general framework and diverse areas of applications.
	\par 
	
	% Orlicz spaces were broadly examined in \cite{convex}, and since then, various researchers have explored their properties and applications in numerous fields. Rao and Ren \cite{Raobook} provided a comprehensive theory of Orlicz spaces, including topics such as compact sets, the product of Orlicz spaces, and topological structures. Specifically, we demonstrate modular convergence results for sequences of Kantorovich-type neural network operators when applied to functions in 
	%			$L^{\Phi}$. Modular convergence plays a important role in the analysis of approximation processes within Orlicz spaces.
	%			 In \cite{orliczapp}, a thorough study is provided on the significant roles of Orlicz spaces in Fourier analysis, stochastic processes, and nonlinear PDEs.
	\par
	%Another suitable function space for studying the behavior of multivariate Kantorovich neural network operators is the mixed norm space. 

%	In mixed norm spaces, the norm of measurable multivariable functions is determined by the iterative application of different norm functions. For instance, unlike classical Lebesgue spaces, in mixed norm Lebesgue spaces, the exponent for integrability can differ for each variable. This flexibility makes it more useful for areas where different variables may have different properties.

	It is noteworthy that the study in mixed norm structure is fundamentally
	different from a usual multivariate generalization. 
	While usual multi-dimensional generalizations typically
	consider uniform norms applied across all the dimensions, mixed norm spaces introduce flexibility by
	allowing different norms in different directions or components. This anisotropic behavior can capture
	variable integrability and smoothness within the same space, which can not be addressed by traditional
	multivariate frameworks. Thus, mixed norm spaces provide a richer structure for analyzing these
	inequalities and operators, enabling results that extend beyond those achievable in usual multivariate
	generalizations. 	These function spaces are well-suited for handling time-dependent signals \cite{sun} and serve a central role in the theoretical investigation of PDEs in both time and space domains, such as the heat and wave equations \cite{heat}.
	%		like pseudo-differential operators and partial differential equations, where different variables may have different properties.
	%		 Additionally, the application of mixed norms has become prevalent in the study of Sobolev spaces.
	%		 The study of sampling problem for signals defined over space and time domains is effectively supported by mixed norm Lebesgue spaces. 
	The origins of this concept is found in the seminal
	 work of Benedek and Panzone \cite{dct}.
	An operator-theoretic approach to mixed norm spaces have been explored in \cite{ban,grey, clean, dct}.
	%		  Further details on mixed norm Lebesgue spaces can be found in \cite{dct, pq}.  
A detailed discussion on the embedding criteria for mixed norm Lebesgue and Orlicz spaces can be found in  \cite{pq}.
	%In \cite{pq}, the study of mixed norm spaces has been carried out in both frameworks, namely mixed norm Lebesgue spaces and mixed norm Orlicz spaces. 
	Maligranda’s work \cite{3} focused on the Calderón–Lozanovskiĭ construction, particularly its application to mixed norm function spaces generated from Banach ideal spaces. His research included a rigorous analysis of the Calderón product associated with these function spaces.
	In light of utility of mixed norm spaces and the long-standing interest for neural network operators in approximation
	theory, studying these operators in the framework of mixed norm spaces appears noteworthy.

	 This construction of neural network operators is particularly effective because it extends to a broader class of functions, including $L^p$-spaces
	and Orlicz spaces \cite{nn, spigler}.
	The study of the above family of NN operators \eqref{intr} within mixed norm spaces is not only of theoretical significance (as detailed in Section~\ref{3}-\ref{4}), but also leads to valuable impact in signal and image analysis (as shown in Subsection \ref{appmix}).
%	 Furthermore, analyzing these family of operators is particularly important, as it enables the demonstration of their significant applications in image processing tasks such as image reconstruction, image denoising, image inpainting, and image scaling (as elaborated in Subsection~\ref{appimg}).	

	%			 For example, black-and-white video signal, which can be modeled by the function $f(x,y,t)$, where $t$ denotes the frame at time $t$, and $(x,y)$ represents the brightness at spatial coordinates $(x,y)$. 
	%		Furthermore, in the study of time-based partial differential equations, singular integrals, average sampling sampling techniques, and time-varying signals, they play a pivotal role (see \cite{dc,dl}). 
	\par
	%		The Orlicz spaces, denoted as $L^{\phi},$ consist of many significant function spaces for different choice of $\phi-$functions, namely classical Lebesgue spaces, Logarithmic spaces (also known as Zygmund spaces) and Exponential spaces (see Section \ref{2}).
	\subsection{Contributions} 
%	We provide a comprehensive analysis of the convergence of multivariate Kantorovich-type  NN operators within mixed norm structure. 
	The key features of this article are as follows:
	\begin{itemize}
		\item The convergence of Kantorovich-type neural network (NN) operators have been studied in Lebesgue spaces \cite{spigler} and Orlicz spaces \cite{nn}. Building on this foundation, the present article offers a comprehensive analysis of the approximation properties of (\ref{intr}) within a mixed norm framework, particularly in mixed norm Lebesgue space and mixed norm  Orlicz spaces.\\
		
		\item  The mixed norm Orlicz spaces have not been extensively  explored in the existing literature. Owing to the flexible structure of Orlicz spaces, this article provides a general framework for studying a broad class of function spaces.\\
		
%		 To establish the convergence of (\ref{intr}) in mixed norm Orlicz spaces, we prove a key result asserting that the space of continuous functions is dense in these spaces.\\
		
%		 This article also provides a modest advancement in this direction.
%		  For instance, a density theorem in mixed norm Orlicz spaces $L^{\overrightarrow{\Phi}}([a,b]^n)$ has been proved.\\
		
		\item Alongside the theoretical developments, we demonstrate applications in the field of image processing, including image reconstruction, denoising, inpainting, and scaling. Furthermore, we highlight how mixed norm structures provide significant advantages over standard usual norm setting in both signal and image processing contexts.
	\end{itemize}
	
	\subsection{Structure of the paper}	The structure of the paper is as follows.  Section \ref{2} provides the fundamental definitions and some important results which will be required for further analysis. In Section \ref{3}, first we study the boundedness as well as convergence of (\ref{multi}) within the framework of 
	$L^{\overrightarrow{\mathscr{P}}}([-1,1]^n)$. Section \ref{4} is devoted to proving these results in the context of mixed norm Orlicz spaces.
	%	 First, we analyze the results for two-layer NN operators, and then we extend this study to 
	%	 $m$-layer NN operators. 
	%	 To establish these results, we begin by $\overrightarrow{\mathscr{P}}=(p_1,p_2)$ and then generalize to $\overrightarrow{\mathscr{P}}=(p_1,...,p_n)$.
	% Further, the boundedness and convergence of proposed family of NN operators given by \eqref{3.1} and \eqref{eq 3.2} within $L^p([a,b])$ for $1 \leq p < \infty$ are established in Section \ref{3}. 	
	%	 Similarly, we first prove the results for $\overrightarrow{\Phi}=(\phi_1,\phi_2)$ and subsequently generalize to $\overrightarrow{\Phi}=(\phi_1,...,\phi_n)$.
	In pursuing this study, a denisty theorem has been establish within $L^{\overrightarrow{\Phi}}([a,b]^n).$ 
	%To support the theoretical results, we present numerical examples and applications in image processing.
	%We illustrate the approximation of functions by multivariate Kantorovich-type NN operators through graphical examples. 
	%In Section \ref{5}, we provide some examples along with numerical illustrations of the operator through graphs and error-tables using particular sigmoidal functions.
	% Lastly, we utilize the multivariate Kantorovich-type NN operators for image reconstruction.
	%%%%%%%%%%%%%%%%%%%%%%%%%%%555
	%Lastly, we address an important application of the multivariate Kantorovich-type NN operators in image reconstruction.
	%  Moreover, we also discuss the valuable applications in image scaling.
	%%%%%%%%%%%%%%%%%%%%%%%%%%%%%%%%%%%555
	Some numerical examples have been presented with graphical representation and error estimates using specific sigmoidal functions in Section \ref{5}. In addition, we also explore applications in image processing, focusing on image reconstruction, image denoising, image inpainting, scaling through a two-dimensional Kantorovich-type NN operator. The performance of these operators has been evaluated by using the notion of Structural Similarity Index Measure (SSIM) and Peak Signal-to-Noise Ratio (PSNR). Furthermore, we discuss the advantageous effects of mixed norm structures in the contexts of signal and image processing.

		\section{Notation and Preliminary Results}\label{2}
%		\section{Preliminaries}\label{2}
		%	Let \(\mathbb{R}^+\) represent the set of positive real numbers, consider \([a, b]\) $\subset$ \(\mathbb{R}^+\). Define \(C([a, b])\) as the space of continuous functions on \([a, b]\), with the norm  \(\|f\|_\infty \) is defined as \( \sup_{x \in [a, b]} |f(x)|\).
		In the following, the notations $\mathbb{N}^r$, $\mathbb{Z}^r$, and $\mathbb{R}^r$ represent the sets of $r$-tuples  of natural numbers, integers, and real numbers respectively. Moreover, the notation $\RR^r_{+}$
			represents  the set of all \( r \)-tuples of non-negative real numbers.
		We denote $\mathcal{J} :=[-1,1]^r \left(\subset \RR^r \right)$.
		%   $\mathcal{J} :=[a_1, b_1] \times \cdots \times [a_n, b_n]\left(\subset \mathbb{R}^n\right)$. 
		For $\textbf{x}=(x_1,...,x_r)$ and $\textbf{y}=(y_1,...,y_r)$, we denote $(\textbf{x}+\textbf{y})=(x_1+y_1,...,x_r+y_r)$, and
		$\xi\textbf{x}=(\xi x_1,...,\xi x_r)$, for any scalar $\xi \in \RR$
		. We write $\textbf{x} \leq \textbf{y}$ to mean that $x_i \leq y_i$ for each $i=1,2,...,r$ and $\textbf{x}/\textbf{y}=(x_1/y_1,...,x_r/y_r)$.
		The space of continuous function on $\mathcal{J} $, is  referred to as $C(\mathcal{J})$ equipped with the norm $\|f\|_{\infty} :=\sup_{\textbf{t} \in \mathcal{J} } |f(\textbf{t})|.$
		%  The space of $p$-integrable functions on $\mathcal{J}$ is denoted by $L^p(\mathcal{J}).$ 
		Furthermore, 
		$M(\mathcal{J})$  denotes the set of all measurable functions  
		on $\mathcal{J}$. Here  \( \mathcal{D}(\textbf{m}) \) is the multivariate sigmoidal functions on the box-domain \(  \RR^r\), where \(\textbf{m} \in \RR_{+}^r \).
		
		In this section, our focus is on three key concepts: sigmoidal functions, mixed norm Lebesgue spaces, and mixed norm Orlicz spaces.
		%	To begin with, we review the fundamental definitions of sigmoidal function.
		
		\subsection{Sigmoidal Functions: Definitions and Properties}\label{2.1}
		
		\begin{defin}\label{def1} (Multivariate sigmoidal function)
			A measurable function $\rho : \RR^r \rightarrow \RR$ is multivariate sigmoidal function provided $$ \lim_{\textbf{x} \rightarrow -\infty}\rho(\textbf{x})=0 \quad \text{and }\quad \lim_{ \textbf{x}\rightarrow \infty}\rho(\textbf{x})=1 .$$
			
			A multivariate sigmoidal function $ \rho(\textbf{x}) $ belongs to the class $  \mathcal{D}(\textbf{m})$ if the following conditions hold:
			\begin{itemize}
				\item[(S1)] \( \rho(\textbf{x}) \) is a non-decreasing function;
				%		 i.e., if \( \textbf{x} \leq \textbf{y} \), then \( \sigma(\textbf{x}) \leq \sigma(\textbf{y}) \), for all \( \textbf{x}, \textbf{y} \in \mathbb{R}^q \);
				\item[(S2)] \( \rho(\textbf{x}) = 1 \) for \( \textbf{x} \geq \mathbf{m} \), and \( \rho(\textbf{x}) = 0 \) for \( \textbf{x} \leq -\mathbf{m} \), where  \( \mathbf{m} = (m, \dots, m) \in \mathbb{R}^r_+ \).
				%		 here $\textbf{x} \leq \mathbf{m}$ denotes that $x_i \leq m_i$ for each $i=1,...,r.$
			\end{itemize}
			
			%	It is clear that, for the case \( s = 1 \), the class \( A(m) \) introduced in \cite{reference13} coincides exactly with the above class \( A(m) \).
		\end{defin}
		%	Moreover, any sigmoidal function $\rho$ is non-decreasing and fulfill the following conditions:
		%	\begin{enumerate}
			%		\item [$(\text{S1})$] 	$\rho(x)-1/2$ is an odd function;
			%\item [$(\text{S2})$]  $\rho \in C^2(\RR) $ is concave on $[0, \infty)$;
			%\item [$(\text{S3})$] 	$\rho (x)=\mathcal{O}(|x|^{-\alpha})$ as $x \rightarrow -\infty,$ for some $\alpha >0$.
			%	\end{enumerate}
		The density function is defined using the sigmoidal function $\rho$ as follows :
		$$\Psi_{\rho}(x) := \rho(x+1)-\rho(x-1),\quad x \in \RR.$$
		
		For further analysis, we will utilize multivariate density function associated  with $\rho$, that is:
		%Next, consider the multivariate function
		\begin{equation} \label{we}
			\Psi_{\rho}(\textbf{x}) :=\Psi_{\rho}(x_1)\cdots  \Psi_{\rho}(x_r), \quad \textbf{x} :=(x_1,...,x_r) \in \RR^r.
		\end{equation} 
		
		%	 The following lemmas will establish several significant properties of this function, we use $\textbf{k} :=(k_1,...,k_q) \in \ZZ^q$
		%For significant properties of $\Psi_{\rho}(\textbf{x})$, we refer to \cite{kadakapl}.
		%	 we present some of these properties in the following lemma.
		The  significant properties of the multivariate density function $\Psi_\rho$ are formulated in the following lemmas.
		\begin{lemma} \cite{qyu} \label{lem1} 
			%\begin{enumerate}[label=\roman*, left=-20pt]
			Let $\rho \in\mathcal{D}(\textbf{m}) $ and $\textbf{m} \in \RR^r_{+}$. Then we have
			\item[(i)] For every $\textbf{x} \in \RR^r$,  $\Psi_{\rho}(\textbf{x} ) \geq 0$ and $\lim_{\textbf{x}\rightarrow \pm \infty}\Psi_{\rho}(\textbf{x})=0;$ 
			%		 	\item[(i)] $\Psi_{\rho}(x)$ is an even function, 
			%		\item[(ii)] For any given $x \in \RR$, $\sum_{k \in \ZZ}\Psi_{\rho}(x-k)=1;$
			%		\item[(iii)] The series $\sum_{k \in \ZZ}\Psi_{\rho}(x-k)$ converges uniformly on compact subsets of $\RR;$
			%		\item[(v)] $\Psi_{\rho}(\textbf{x}) \geq 0$ for every $ \textbf{x}\in \RR^q$, and $\Psi_{\rho}(\textbf{x})$ is an even function;
			\item[(ii)]  For \( \textbf{x} \in [0, 2m]^r \), \( \Psi_{\rho}(\textbf{x}) + \Psi_{\rho}(\textbf{x} - 2\mathbf{m}) = 1 \), and \( \Psi_{\rho}(\textbf{x}) + \Psi_{\rho}(\textbf{x} + 2\mathbf{m}) = 1 \) for \( \textbf{x} \in [-2m, 0]^r \);
			%				 where \([0, 2m]^r := [0, 2m] \times \cdots \times [0, 2m]\);
			\item[(iii)]
			%		\(\operatorname{supp}(\Psi_{\rho}) \subseteq [-2m, 2m]^q\), where \( m \in \mathbb{R}_+ \).
			\( \Psi_{\rho}(\textbf{x}) = 0 \) for every \( \textbf{x} \in \mathbb{R}^r \) satisfying both \( x_i \geq 2m \) and \( x_i \leq -2m \), $i=1,...,r$.
		\end{lemma}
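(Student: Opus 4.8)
The plan is to reduce all three assertions to one‑dimensional statements about the scalar density $\psi(t):=\rho(t+1)-\rho(t-1)$ via the product structure \eqref{we}, and then to read those off from the monotonicity condition (S1) and the compact‑support normalization (S2). First I would record the scalar facts. Since $t+1>t-1$ and $\rho$ is non‑decreasing by (S1), we get $\psi(t)\ge 0$; and since $0\le\rho\le 1$ (a consequence of (S1) together with the sigmoidal limits in Definition~\ref{def1}), we also have $0\le\psi(t)\le 1$. Hence $\Psi_{\rho}(\mathbf{x})=\prod_{i=1}^{r}\psi(x_i)$ is a product of numbers in $[0,1]$, so $\Psi_{\rho}(\mathbf{x})\ge 0$, which is the first half of (i).

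For the decay in (i) and for (iii), the key point is that (S2) confines $\psi$ to a bounded interval: feeding the thresholds from (S2) into the defining formula shows that for $t$ large enough both terms of $\psi(t)$ equal $1$, while for $t$ small enough both equal $0$, so $\psi\equiv 0$ outside $[-2m,2m]$. Consequently $\Psi_{\rho}(\mathbf{x})=\prod_{i=1}^{r}\psi(x_i)$ vanishes as soon as a single coordinate $x_i$ leaves $[-2m,2m]$, which is precisely (iii); and since $\Psi_{\rho}$ then vanishes identically outside the compact box $[-2m,2m]^{r}$, we obtain $\lim_{\mathbf{x}\to\pm\infty}\Psi_{\rho}(\mathbf{x})=0$, finishing (i).

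For (ii) I would argue in each coordinate by telescoping. Expanding $\psi(t)+\psi(t-2m)$ from the definition and collecting terms, everything cancels except two boundary evaluations of $\rho$ which, for $t\in[0,2m]$, fall in the flat regions where (S2) pins $\rho$ to $1$ and to $0$; hence the sum equals $1$. The companion identity on $[-2m,0]$ follows in the same manner (or from the observation that $t\mapsto 1-\rho(-t)$ is again an admissible sigmoidal), and one then passes to the product over coordinates in the form in which the identity is actually used for \eqref{intr} in the convergence arguments of Sections~\ref{3}--\ref{4}. The one delicate point is the bookkeeping of the threshold inequalities coming from (S2) — one must check that every argument of $\rho$ appearing in the expansion genuinely lands in $\{\rho=1\}$ or $\{\rho=0\}$ for the stated range of $t$, which is exactly where the explicit constant $m$ enters — but this is routine and I foresee no real obstacle; the result is the one recorded in \cite{qyu}, restated here on $\mathbb{R}^{r}$.
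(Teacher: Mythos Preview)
The paper does not supply its own proof of this lemma; it is quoted verbatim from \cite{qyu}, so there is no in-paper argument to compare against. Your treatment of (i) and (iii) is correct and is exactly the intended reduction: nonnegativity from (S1), compact support of the scalar density from (S2), and then the product formula \eqref{we}.

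Your argument for (ii), however, has a genuine gap on two levels. First, in one dimension the expansion
\[
\psi(t)+\psi(t-2m)=\rho(t+1)-\rho(t-1)+\rho(t-2m+1)-\rho(t-2m-1)
\]
does \emph{not} telescope for general $m$: the interior terms $\rho(t-1)$ and $\rho(t-2m+1)$ cancel only when $2m=2$. For $m\neq 1$ these terms survive and, for suitable $t\in[0,2m]$, lie strictly inside $(-m,m)$ where (S2) says nothing; a concrete counterexample is $m=2$, $t=2$, which yields $1-\rho(1)+\rho(-1)$, not $1$ in general. So the ``routine bookkeeping'' you defer is precisely where the claim fails unless the density is defined with shift $m$ rather than shift $1$ (as it is in \cite{qyu}; the present paper's Definition with shift $1$ and the cited lemma with parameter $m$ are only consistent at $m=1$). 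Second, even granting the scalar identity $\psi(t)+\psi(t-2m)=1$, the multivariate statement $\Psi_\rho(\mathbf{x})+\Psi_\rho(\mathbf{x}-2\mathbf{m})=1$ does \emph{not} follow from the product structure: writing $a_i=\psi(x_i)$ one would need $\prod_i a_i+\prod_i(1-a_i)=1$, which is false already for $r=2$ when $a_1=a_2=\tfrac12$. Part (ii) is therefore a genuinely one-dimensional identity (this is how it is stated and used in \cite{qyu}); your ``pass to the product over coordinates'' step cannot work as written.
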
	
		%	\end{enumerate}
%	\begin{proof}
%		The proof of this lemma can be found in \cite{qyu}.
%	\end{proof}
	
	%	In order to establish the desired results, the subsequent lemma is useful.
	\begin{lemma}\label{lem 2.3} \cite{spigler}
		Let $\textbf{x} \in [-1,1] \times \cdots \times [-1,1]\subset \RR^r$ and $n \in \NN$. Then
		\begin{equation}\label{lem 2.2}
			\frac{1}{
				\displaystyle \prod_{i=1}^{r} \sum_{k_i=-n}^{n- 1} \Psi_\rho(n x_i - k_i)} \leq \frac{1}{\left[\Psi_\rho(2)\right]^r} .
		\end{equation}
		%		for $\textbf{k} :=(k_1,...,k_r) \in \ZZ^r$,
		Here	$\lceil \cdot \rceil$  and  $\lfloor \cdot \rfloor$  represent the ceiling and floor functions, respectively.
	\end{lemma}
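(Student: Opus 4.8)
The plan is to prove the equivalent statement $\prod_{i=1}^{r}\sum_{k_i=-n}^{n-1}\Psi_\rho(nx_i-k_i)\ \ge\ [\Psi_\rho(2)]^r$ and then pass to reciprocals, which is legitimate once we know $\Psi_\rho(2)>0$ (the nondegenerate case in which the statement has content). Because the multivariate density in \eqref{we} is a tensor product, the left-hand side already splits coordinatewise, so the whole matter reduces to the one-dimensional estimate
\[
\sum_{k=-n}^{n-1}\Psi_\rho(nx-k)\ \ge\ \Psi_\rho(2),\qquad x\in[-1,1],\ n\in\NN,
\]
after which one simply multiplies the $r$ copies together. The passage from one to $r$ dimensions is purely formal; the one-dimensional bound is the heart of the argument.

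For that bound, I would fix $x\in[-1,1]$, so that $nx\in[-n,n]$, and single out one admissible lattice index $k^{\ast}\in\{-n,\dots,n-1\}$ for which $nx-k^{\ast}$ falls in a fixed bounded ``core'' interval around the origin. Concretely, $k^{\ast}:=\lfloor nx\rfloor$ works whenever $nx\in[-n,n-1]$, giving $nx-k^{\ast}\in[0,1)$, while the boundary regime $nx\in(n-1,n]$ is covered by $k^{\ast}:=n-1$, giving $nx-k^{\ast}\in(0,1]$; in every case $nx-k^{\ast}\in[0,2]$ and $k^{\ast}$ is an admissible index. Since all summands are nonnegative by Lemma~\ref{lem1}(i), discarding the remaining terms leaves
\[
\sum_{k=-n}^{n-1}\Psi_\rho(nx-k)\ \ge\ \Psi_\rho(nx-k^{\ast}),
\]
so everything comes down to showing $\Psi_\rho(u)\ge\Psi_\rho(2)$ for all $u\in[0,2]$.

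To obtain this last inequality I would write $\Psi_\rho(u)=\rho(u+1)-\rho(u-1)$ and exploit the structure of $\rho\in\mathcal{D}(\mathbf{m})$: the monotonicity (S1), the saturation of $\rho$ at $0$ and $1$ outside $[-m,m]$ (S2), the compact-support statement Lemma~\ref{lem1}(iii), and the partition identities of Lemma~\ref{lem1}(ii). Roughly, on the part of $[0,2]$ where $\rho(u+1)$ has already reached its saturation value one compares $\rho(u-1)$ with $\rho(1)$ by monotonicity, while on the complementary part one rewrites $\Psi_\rho(u)$ via $\Psi_\rho(u)+\Psi_\rho(u-2\mathbf{m})=1$ and uses nonnegativity of the shifted term. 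Assembling the coordinatewise estimates, raising to the $r$-th power, and inverting then yields the claim. I expect the genuine obstacle to be precisely this inequality $\Psi_\rho(u)\ge\Psi_\rho(2)$ on $[0,2]$ — that is, certifying that the sum, which a priori could concentrate anywhere, always retains at least one term bounded below by the specific constant $\Psi_\rho(2)$; the product decomposition, the dropping of nonnegative terms, and the location of $k^{\ast}$ are all routine once that is in hand.
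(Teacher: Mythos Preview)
The paper does not supply a proof of this lemma; it is quoted from \cite{spigler}, and the one-dimensional inequality is recorded (again without proof) as Remark~\ref{NN}(3). Your overall strategy --- factor via the tensor product \eqref{we}, reduce to the univariate bound, single out one admissible index $k^\ast$ with $nx-k^\ast\in[0,1]$ (your own computation actually gives $[0,1]$, not $[0,2]$), and drop the remaining nonnegative terms --- is precisely the argument in the cited references.

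Where your sketch diverges is in the justification of the final pointwise bound $\Psi_\rho(u)\ge\Psi_\rho(2)$ for $u\in[0,2]$. In \cite{spigler,NN} this is a one-line consequence of the fact that $\Psi_\rho$ is even and non-increasing on $[0,\infty)$, which in turn follows from the standing hypotheses there that $\rho-\tfrac12$ is odd and $\rho$ is concave on $[0,\infty)$ (cf.\ Remark~\ref{NN}(1)). Your proposed alternative route through the partition identity of Lemma~\ref{lem1}(ii) does not fit the present setting: that identity is imported from \cite{qyu}, where the density is built with shift $m$ rather than $1$, and with the paper's definition $\Psi_\rho(x)=\rho(x+1)-\rho(x-1)$ it can fail once $m>1$ (e.g.\ for a non-decreasing $\rho\in\mathcal D(m)$ that is constant on $[-1,1]$ one gets $\Psi_\rho(0)=0$, so neither $\Psi_\rho(0)+\Psi_\rho(-2m)=1$ nor $\Psi_\rho(0)\ge\Psi_\rho(2)$ holds). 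Under (S1)--(S2) alone the desired lower bound need not hold; the correct fix --- and what the cited proof does --- is to invoke the concavity-based monotonicity of $\Psi_\rho$ on $[0,\infty)$ directly.
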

	
	\begin{rem} \cite{NN} \label{NN}
		(1)	 From the properties described above, it can be observe that 
		\(\rho'(0) \geq \rho'(x) \geq  0, \, x \in \mathbb{R}.\) Hence, \(\rho\) as well as \(\Psi_{\rho}\) are Lipschitz continuous on \(\mathbb{R}.\) Moreover, \(\Psi_{\rho}(x) = \mathcal{O}(|x|^{-\eta})\) as \(x \to \pm \infty\), with \(\eta\) being a positive constant. Consequently, it follows that \(\Psi_{\rho} \in L^1(\mathbb{R})\) for \(\eta > 1,\) and
		\begin{equation}\label{ce}
			\displaystyle \int_{\mathbb{R}} \Psi_\rho (t) \, dt = 1.
		\end{equation}
%		(2) As in the case of one variable, the multivariate sigmoidal function $\Psi_\rho$ is Lipschitz continuous. Indeed, it is easy to observe that
%		\[
%		|\Psi_\rho(\textbf{x}) - \Psi_\rho(\textbf{y})| \leq r\, \rho'(0)\, \|\textbf{x} - \textbf{y}\|, \quad \textbf{x}, \textbf{y} \in \mathbb{R}^r,
%		\]
%		where $\|\cdot\|$ represents the Euclidean norm in $\mathbb{R}^r$. Clearly $\|\textbf{x}\| \leq |\textbf{x}|$, which holds for all $\textbf{x} \in \mathbb{R}^r$, where $|\textbf{x}|$ denotes the Manhattan norm.\\
		(2) For all $n \in \mathbb{N}$, we have
		$$1 \geq \displaystyle\sum_{k=-n}^{n} \Psi_\rho (nx - k) \geq \Psi_\rho (1) > 0, \quad  x \in [-1,1].$$
		(3) For all $n \in \NN$ satisfies, such that
		$$\displaystyle\sum_{k= -n}^{n-1} \Psi_\rho (nx - k)\geq \Psi_\rho(2)>0, \quad x \in [-1,1].$$
	\end{rem}
	%		\item[(b)] The infinite series $
	%		\sum_{\textbf{k} \in \mathbb{Z}^q} \Psi_{\rho}(\textbf{x} - \textbf{k})$
	%		converges uniformly on compact subsets of \(\mathbb{R}^q.\)
	\begin{rem}\label{rem 2.4}
		(a)	The series $\displaystyle
		\sum_{\textbf{k} \in \mathbb{Z}^r} \Psi_{\rho}(\textbf{x} - \textbf{k})$
		converges uniformly on compact subsets of \(\mathbb{R}^r.\)\\
		%			\vspace{0.30mm}
		(b)	 For every \(\textbf{x} \in \mathbb{R}^r\), we have
		$\displaystyle	\sum_{\textbf{k} \in \mathbb{Z}^r} \Psi_{\rho}(\textbf{x} - \textbf{k}) = 1.$
	\end{rem}

	Next, we present the seminal definitions and properties associated with mixed norm Lebesgue spaces.
	
	\subsection{ Mixed norm Lebesgue space}\label{sec 2.2}
	%		Consider $(V_i,S_i,\mu_i)$ are $\sigma$-finite measure spaces with  $1 \leq i \leq r $, and $(V,S,\mu)$ as the product measure space, where $\displaystyle V =\prod_{i=1}^{r}{V_i}.$ 
	Let $\overrightarrow{\mathscr{P}}=(p_1,...,p_r),$  where $ p_i \in [1,\infty) $ is an index vector, and $ f : \mathcal{J} \rightarrow \RR $ is a measurable function. Then 
	\begin{equation*} 
		\|f\|_{\overrightarrow{\mathscr{P}}} = \left( \int_{-1}^{1}...\left( \int_{-1}^{1}\left( \int_{-1}^{1} |f(x_1,...,x_r)|^{p_1}dx_1\right) ^{\frac{p_2}{p_1}}dx_2\right) ^{\frac{p_3}{p_2}}... 
		\hspace{4pt} dx_r\right) ^{\frac{1}{p_r}}.
	\end{equation*}
	\begin{defin} (Mixed norm Lebesgue space)
		The mixed norm Lebesgue space $L^{\overrightarrow{\mathscr{P}}}( \mathcal{J})$ for $\overrightarrow{\mathscr{P}}=(p_1,...,p_r)$, $1 \leq p_i < \infty$, is defined as
		$$ L^{\overrightarrow{\mathscr{P}}}( \mathcal{J})= \small\{f:   \mathcal{J} \rightarrow \RR: f \hspace{2pt} \text{is measurable and}\hspace{3pt}  \|f \|_{\overrightarrow{\mathscr{P}}}< \infty \small\}.$$
	\end{defin}
	The \textit{H\"{o}lder's inequality} in the context of mixed norm Lebesgue spaces states that (see \cite{dct})
	\begin{equation*}\label{holder}
		\|f g\|_1\leq \|f\|_{\overrightarrow{\mathscr{P}}}\|g\|_{\overrightarrow{\mathscr{Q}}},
	\end{equation*}
	for $f\in L^{\overrightarrow{\mathscr{P}}}( \mathcal{J}) $ and $ g\in L^{\overrightarrow {\mathscr{Q}}}( \mathcal{J})$. Here $ {\overrightarrow{\mathscr{P}}= (p_1,...,p_r)\in [1,\infty]^{r}},$ ${\overrightarrow {\mathscr{Q}}= (q_1,...,q_r)\in [1,\infty]^{r}}$
	with each pair $(p_k,q_k),$ satisfying $\frac{1}{p_k} +\frac{1}{q_k}=1$, for  $k=1,...,r$. In what follows, we only focus on the tuples $\overrightarrow{\mathscr{P}}=(p_1,p_2,\dots,p_n)$ with non-decreasing components, i.e., $p_i\leq p_{i+1}$ for $1\leq i\leq r-1$.

	%	 For further details on mixed norm Lebesgue spaces, we refer to \cite{dct, ban}.
	
	%	\subsection{Fundamental Characteristics and Properties of Orlicz Spaces}\label{2.3}
	We now record fundamental definitions and result related to Orlicz spaces, which will be required for the subsequent analysis. 
	%		(see \cite{k2007}). 
	
	\subsection{Rudiments of Orlicz space}
	
	\begin{defin} \label{xx}(Convex function)
	A function \( f : I \rightarrow \mathbb{R} \) is said to be \emph{convex} function if
	\[
	f(s x + (1 - s)y) \leq s f(x) + (1 - s)f(y)
	\]
	for every \( x, y \in I \) and \( s \in [0,1] \).
	
	Assume that \( f : [0, \infty) \rightarrow [0, \infty) \) is convex and \( f(0) = 0 \). Choosing \( y = 0 \) in the definition and \( s = \lambda \) or \( s = \frac{1}{\lambda} \), we obtain
	\[
	\begin{aligned}
		f(\lambda x) &\leq \lambda f(x) \quad &&\text{for } \lambda \in [0,1], \\
		f(\lambda x) &\geq \lambda f(x) \quad &&\text{for } \lambda \geq 1.
	\end{aligned}
	\]
	\end{defin}
	\begin{defin}  (Orlicz function)
		An Orlicz function is a convex function  $\phi:[0, \infty)\to [0, \infty]$ such that
		\begin{enumerate}\label{2.3}
			\item [$(\text{1})$] $\phi$ is non-decreasing; 
			
			\item [$(\text{2})$]  $\phi$ is continuous from the left;
			
			\item [$(\text{3})$] $\phi(0)=0$ and $\lim\limits_{u\to \infty} \phi(u)=\infty .$
			
			%			\item [$(\text{3})$] 
		\end{enumerate}
	\end{defin}
	The notion of modular convergence is essential to study the convergence in the framework of Orlicz space. The functional $I^{\phi}$  associated with an Orlicz function $\phi$ on $M([-1,1])$ is defined as
	\begin{equation*} 
		I^{\phi}[g]:=\int_{-1}^{1} \phi(|g(x)|) dx .
	\end{equation*}
	This functional $I^{\phi}$ extends the classical Lebesgue integral 
	 $\int |g|^p\,dx$  and serves as the foundation for defining Orlicz spaces. It is referred to as a modular functional and satisfies the following properties for  every $g,\,h \in  M([-1,1]):$
	\begin{enumerate}[label=(\roman*)]
		\item $I^{\phi}(g)=0$ if and only if $g=0,$
		\item $I^{\phi}(-g)=I^{\phi}(g),$
		\item $I^{\phi}(\alpha g +\beta h) \leq I^{\phi}(g)+I^{\phi}(h) , \hspace{3pt}for\hspace{3pt} \alpha,\beta \geq 0, \hspace{3pt} \alpha +\beta =1.$
	\end{enumerate}
	Moreover, for any $g \in  M([-1,1])$, the map $\alpha\mapsto I^{\phi}(\alpha g)$ is non-decreasing for $\alpha >0$.
	\begin{defin}(Orlicz space)
		The Orlicz space associated with Orlicz function $\phi$ is defined as
		\begin{equation*}\label{L}
			L^{\phi}([-1,1]):=\left\lbrace f \right. \in M([-1,1]) : I^{\phi}[\lambda f]< \infty \hspace{3pt} \text{for some}\hspace{2pt} \lambda >0 \left.\right\rbrace. 
		\end{equation*}
		It is important to mention that the space $L^{\phi}([-1,1])$ forms a normed linear space with the \emph{Luxemburg norm} denoted by $\|\cdot\|_{\phi}$, defined as 
		$$\|f\|_{\phi} := \inf \{ \lambda > 0 : I^{\phi}[ f/ \lambda] \leq 1 \}.$$
	\end{defin}
	Numerous significant examples of Orlicz functions and the associated Orlicz spaces are described below.
	\begin{enumerate} [label=(\Roman*)., left=0pt]
		\item For \( p \in (0, \infty) \), define $\phi(u)=\displaystyle \frac{u^p}{p}$ and, for $p=\infty,$  set
		\[
		\phi(u) = 
		\begin{cases}
			0, & \text{for }  u \leq 1, \\
			\infty, & \text{for } u > 1.
		\end{cases}
		\]
		This construction leads to the classical Lebesgue spaces $L^p(\RR)$, $1\leq p \leq \infty.$
		
%		\item $\phi(u)=(\exp(u^a)-1)$ for $a>0$, generates \emph{Exponential space} $\exp L^a$.
			\item $\phi(u)=(u-1)\Chi_{[1,\infty)}(u)$, this leads to the space $L^1(\RR)+L^{\infty}(\RR)$.
		\item $\phi(u)=u^{\alpha}\log ^{\beta}(u+e)$, for $\alpha \geq 1, \beta >0$ corresponding to the \emph{Logarithmic space} $L^{\alpha}\log ^{\beta}L(\RR)$.

	\end{enumerate}
	
	%	Further, the space of all finite elements of $L^{\phi}([-1,1])$ defined as
	%	%Specifically
	%	\begin{equation}\label{E}
		%		E^{\phi}([-1,1])=\left\lbrace f \right. \in M([-1,1]) : I^{\phi}[\lambda f] < \infty \hspace{3pt} \text{for every}\hspace{2pt} \lambda >0 \left.\right\rbrace. 
		%	\end{equation}
	A family of functions \( (f_k)_{k > 0} \subset L^{\phi}([-1,1]) \) is said to be modularly convergent to \( f \in L^{\phi}([-1,1]) \) if there exists \( \lambda >0 \) such that
	\[
	\lim_{k \to \infty} I^{\phi}[\lambda(f_k - f)] = 0.
	\]
	%		 In the space \( L^{\phi}([-1,1]) \), we define strong notion of convergence as norm convergence.
	Also, a family of functions \( (f_k)_{k > 0} \subset L^{\phi}([-1,1]) \) is said to converge in norm to \( f \in L^{\phi}([-1,1]) \) if 
	\[
	\lim_{k \to \infty} I^{\phi}[\lambda(f_k - f)] = 0
	\]
	holds for all \( \lambda > 0 \).
	
	It is important to note that modular and  norm   convergence are coincide if and only if $\phi$-function fulfills the $\Delta_{2}$-condition given by
	$$\phi(2u) \leq N\phi(u), \quad  u \in [0,\infty),$$
	for some $N>0$.

	\begin{defin} (Orlicz conjugate)
		The function $\psi:[0,\infty) \to [0,\infty]$ is an Orlicz conjugate of an Orlicz function $\phi$, is given by
		%		$\displaystyle \psi(u):=\sup_{0 \leq v <\infty} (uv-\phi(v)).$ 
		$$
		\psi(u) \equiv
		\begin{cases}
			\displaystyle\sup_{v \geq 0} \hspace{1pt}(uv -\phi(v) ), & \text{when } u \in [0, \infty), \\
			\infty, & \text{when } u = \infty.
		\end{cases}
		$$
	\end{defin}	
	Let $\psi$ be the conjugate function of the Orlicz function $\phi$. Then, for $f\in L^\phi(\RR^n)$ and $g\in L^{\psi}(\RR^n)$, the \textit{H\"{o}lder's inequality} in Orlicz space is defined as 
	$$\|fg\|_{1} \leq 2 \|f\|_{\phi} \|g\|_{\psi}.$$
	
	\begin{defin}(Jensen's integral inequality) Given a measurable space $(\Omega, \mu)$, 
		the Jensen's integral inequality  is satisfied for every function \( g \in L^{\phi}(\Omega) \) provided the following inequality is holds 
		\[ \phi\left( \frac{1}{\mu(\Omega)}\int_\Omega g(x)\,d\mu\right) \leq \frac{1}{\mu(\Omega)}\displaystyle \int_\Omega \phi(g(x))\,d\mu.\]
	\end{defin}
%	\begin{rem}\label{or}
%		\cite[Section 2.1]{Raobook}  For $a_k \geq 0$ and an Orlicz function $\phi$, we have
%		$$\displaystyle \sum_{k=-n}^{n-1} \phi(a_k) \leq \displaystyle \phi\left(\sum_{k=-n}^{n-1} a_k\right) $$
%	\end{rem}
	To explore more results and details about Orlicz spaces, reader can refer \cite{ k2007, nn,  Raobook, orliczapp, 13, 2025}.
	We are now in a position to extend the framework of Orlicz space to the context of mixed norm spaces.
	%	Next, we present the above definitions in the context of mixed norm Orlicz spaces.
	%	\begin{defin}
		\subsection{Mixed norm Orlicz space}
		Let \(\overrightarrow{\Phi} = (\phi_1,..., \phi_r)\), where \(\phi_i\) is an Orlicz function for $ 1 \leq i \leq r$. To establish the modular convergence in the framework of mixed norm Orlicz spaces, the modular functional 
		$I^{\overrightarrow{\Phi}}: M(\mathcal{J}) \rightarrow [0,\infty]$ is defined by 
		\begin{equation*} 
			I^{\overrightarrow{\Phi}}[f] :=\int_{-1}^{1} \phi_{r}\left( \int_{-1}^{1}  \phi_{r-1}\cdots\left(\int_{-1}^{1} \phi_1\left(|f(x_1,...,x_r)|\right)dx_1\right)\cdots dx_{r-1}\right)dx_r\hspace{1pt} .
		\end{equation*}
%			\begin{equation*} 
%			I^{\overrightarrow{\Phi}}[f] :=\int_{0}^{1} \phi_{2}\left( \int_{0}^{1}  \phi_{1}\left(K_n f(x_1,x_2)-|f(x_1,x_2)|\right)dx_1\right)dx_2 .
%		\end{equation*}
		%	\end{defin}
	\begin{defin}(Mixed norm Orlicz space)
		The Mixed norm Orlicz space generated by $\overrightarrow{\Phi}=(\phi_1,...,\phi_r)$ is defined as
		$$L^{\overrightarrow{\Phi}}(\mathcal{J})=\left\lbrace f \right. \in M(\mathcal{J}) : I^{\overrightarrow{\Phi}}[\lambda f] < \infty \hspace{3pt} \text{for some}\hspace{2pt} \lambda >0 \left.\right\rbrace. $$
		%	with the \emph{Luxemburg norm}  
		%	$\|f\|_{\overrightarrow{\Phi}} := \inf \{ \lambda > 0 : I^{\overrightarrow{\Phi}}[ f/ \lambda] \leq 1 \}.$
	\end{defin}
	%	 For a sake of completeness, we recall that the important subspace of $L^{\overrightarrow{\Phi}}(\mathcal{J})$ is
	%	 referred to as (\ref{E}).
	%	 to as the space of all finite elements of $L^{\Phi}(\mathcal{J})$
	%	 $$E^{\Phi}(\mathcal{J})=\left\lbrace f \right. \in M(\mathcal{J}) : I^{\Phi}[\lambda f] < \infty \hspace{3pt} \text{for every}\hspace{2pt} \lambda >0 \left.\right\rbrace. $$
	%	A notion of convergence known as modular convergence  and it can be introduced in $L^{\Phi}(\mathcal{J})$.
	%	 inducing a topology on the space \cite{stens}. 
	Moreover, we say that a family of functions $(f_k)_{k>0} \subset L^{\overrightarrow{\Phi}}(\mathcal{J})$ is  said to be modularly converges to $f \in L^{\overrightarrow{\Phi}}(\mathcal{J})$ if
	$$\lim_{k\to\infty}I^{\overrightarrow{\Phi}}[\lambda(f_k-f)]=0$$
	for some $\lambda >0.$
	%	It is essential to observe that, within $L^{\Phi}(\mathcal{J})$
	% the strong notion of convergence is characterized by norm-convergence.
	In particular, a family of functions $(f_k)_{k>0} \subset L^{\overrightarrow{\Phi}}(\mathcal{J})$ is said to converge in norm to  
	$f \in L^{\overrightarrow{\Phi}}(\mathcal{J}) $ if
	$$\lim_{k\to\infty}I^{\overrightarrow{\Phi}}[\lambda(f_k-f)]=0$$
	for every $\lambda >0.$ 
	%	It is important to mention that the norm convergence and modular convergence are equivalent if and only if the $\overrightarrow{\Phi}$-function satisfies the $\Delta_{2}$-condition.
	We say that $\overrightarrow{\Phi}=(\phi_1,...,\phi_r)$ satisfies $\Delta_{2}$-condition provided that each component function $\phi_i$ satisfies $\Delta_{2}$-condition.
	%	Moreover, if each $\phi_i$ in the vector $\overrightarrow{\Phi}=(\phi_1,..,\phi_r)$ is an Orlicz function and satisfies the $\Delta_{2}$-condition, then the vector $\Phi$ is said to satisfy the $\Delta_{2}$-condition.
	
	The space $L^{\overrightarrow{\Phi}}(\Omega)$ generalizes the concept of mixed norm Lebesgue space $L^{\overrightarrow{P}}(\Omega)$, as introduced in Subsection \ref{sec 2.2}, paritcularly under the assumption that the components of $\overrightarrow{P}=(p_1,p_2,\dots, p_n)$ are non-decreasing. Specifically, the Orlicz functions are chosen as $\phi_1(x)=x^{p_1}$ and $\phi_i(x)=x^\frac{p_i}{p_{i-1}}$, for $2 \leq i \leq n$, the corresponding mixed norm Orlciz space $L^{\overrightarrow{P}}(\Omega)$ coincides with mixed norm Lebesgue space $L^{\overrightarrow{P}}(\Omega)$.

	\section{Multivariate Kantorovich-type NN Operators in mixed norm Lebesgue Spaces}\label{3}
	%Consider $\rho$ as a sigmoidal function. 
	For a locally integrable function $f :  \mathcal{J} \rightarrow \RR$, the family of multivariate Kantorovich-type NN operators based on feedforward neural network for $n \in \NN$ is defined as (see \cite{spigler})
	\begin{equation}\label{multi}
		K_n(f,\textbf{x}):= \frac{\displaystyle\sum_{\textbf{k}=-n}^{n-1}  \left(n^r\int_{I_{\textbf{k},n}}f(\textbf{t})d\textbf{t}\right)\Psi_{\rho}(n\textbf{x}-\textbf{k})}{\displaystyle\sum_{\textbf{k}=-n}^{n-1} \Psi_{\rho}(n\textbf{x}-\textbf{k})}
	\end{equation}
	where $ \hspace{5pt} \displaystyle I_{\textbf{k},n}=\prod_{j=1}^{n}\left[\frac{k_j}{n},\frac{k_j+1}{n}\right]$.
	%	Here $\textbf{x} \in \mathcal{J}:=[a_1,b_1] \times...\times [a_q,b_q] \subset \RR^q.$\\
	%	 $\textbf{k}=(k_1,...,k_q) \in \ZZ^q. $\\
	In order to prove the convergence of above family (\ref{multi}) in the framework of mixed norm Lebesgue space $ L^{\overrightarrow{\mathscr{P}}}(\mathcal{J})$, we first establish the following result.
	%	 boundedness of operator $(K_nf)$
	%	within $ L^{\overrightarrow{\mathscr{P}}}(\mathcal{J})$.
	%	 $\overrightarrow{\mathscr{P}}=(p_1,...,p_r)$.
	\begin{thm}\label{thm 3.1}
		Let $f \in L^{\overrightarrow{\mathscr{P}}}(\mathcal{J})$ and $\overrightarrow{\mathscr{P}}=(p_1,...,p_r)$. Then, we have
		$$\|K_nf\|_{\overrightarrow{\mathscr{P}}}\leq  \left(\frac{\|\Psi_{\rho}\|_{1}}{\left(\Psi_{\rho}(2)\right)} \right)^{\sum_{i=1}^{r}\frac{1}{p_i}}  \hspace{3pt}\|f\|_{\overrightarrow{\mathscr{P}}}.$$
	\end{thm}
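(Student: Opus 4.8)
The plan is to reduce the statement to a one-dimensional inequality and then lift it to the mixed-norm setting by induction on the number of coordinates, peeling one variable off at a time from the \emph{outermost} slot. First I would record the univariate bound: for the one-dimensional Kantorovich NN operator
\[
(T_n g)(x)=\frac{\displaystyle\sum_{k=-n}^{n-1}\Big(n\!\int_{k/n}^{(k+1)/n}g(t)\,dt\Big)\Psi_{\rho}(nx-k)}{\displaystyle\sum_{k=-n}^{n-1}\Psi_{\rho}(nx-k)},\qquad x\in[-1,1],
\]
one has $\|T_n g\|_{L^{p}([-1,1])}\le\big(\|\Psi_{\rho}\|_{1}/\Psi_{\rho}(2)\big)^{1/p}\,\|g\|_{L^{p}([-1,1])}$ for every $1\le p<\infty$. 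This follows from two applications of Jensen's inequality: the weights $\Psi_{\rho}(nx-k)/\sum_{j}\Psi_{\rho}(nx-j)$ form a probability vector and $n\!\int_{k/n}^{(k+1)/n}(\cdot)\,dt$ averages over an interval of length $1/n$, so convexity of $s\mapsto s^{p}$ gives $|T_ng(x)|^{p}\le\sum_{k}\frac{\Psi_{\rho}(nx-k)}{\sum_j\Psi_{\rho}(nx-j)}\,n\!\int_{k/n}^{(k+1)/n}|g(t)|^{p}\,dt$; one then uses the denominator bound $\sum_j\Psi_{\rho}(nx-j)\ge\Psi_{\rho}(2)$ (Remark~\ref{NN}(3)), integrates in $x$ over $[-1,1]$, estimates $\int_{-1}^{1}\Psi_{\rho}(nx-k)\,dx\le n^{-1}\|\Psi_{\rho}\|_{1}$ using $\Psi_\rho\in L^1(\RR)$ (Remark~\ref{NN}(1)), and notes that $\{[k/n,(k+1)/n]:-n\le k\le n-1\}$ tiles $[-1,1]$.

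Next, since $\Psi_{\rho}(n\textbf{x}-\textbf{k})=\prod_{i}\Psi_{\rho}(nx_i-k_i)$ and $I_{\textbf{k},n}$ is a product of intervals, the operator $(\ref{multi})$ factors as a composition of the coordinatewise one-dimensional operators, which commute; writing $\mathcal{K}_m$ for its $m$-variate version one gets the recursion $(\mathcal{K}_r f)(\textbf{x})=T_n\big[(\mathcal{K}_{r-1}f)(x_1,\dots,x_{r-1},\cdot)\big](x_r)$, where $T_n$ acts in the last variable and $\mathcal{K}_{r-1}$ on the first $r-1$ variables with $t_r$ held fixed. I would then prove $\|\mathcal{K}_r f\|_{\overrightarrow{\mathscr{P}}}\le c^{\,\sum_{i=1}^{r}1/p_i}\|f\|_{\overrightarrow{\mathscr{P}}}$, $c:=\|\Psi_{\rho}\|_{1}/\Psi_{\rho}(2)$, by induction on $r$, with base case the univariate bound above. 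For the inductive step, fix $x_r$ and write $(\mathcal{K}_r f)(\cdot,x_r)=\frac{1}{D(x_r)}\sum_{k_r}\Psi_{\rho}(nx_r-k_r)\,n\!\int_{k_r/n}^{(k_r+1)/n}(\mathcal{K}_{r-1}f)(\cdot,t_r)\,dt_r$ with $D(x_r)=\sum_{k_r}\Psi_{\rho}(nx_r-k_r)$; since $\|\cdot\|_{(p_1,\dots,p_{r-1})}$ is a genuine norm, the triangle inequality together with Minkowski's integral inequality for that norm gives, for each fixed $x_r$,
\[
\big\|(\mathcal{K}_r f)(\cdot,x_r)\big\|_{(p_1,\dots,p_{r-1})}\le (T_n\beta)(x_r),\qquad \beta(t_r):=\big\|(\mathcal{K}_{r-1}f)(\cdot,t_r)\big\|_{(p_1,\dots,p_{r-1})}.
\]
Taking the $L^{p_r}$-norm in $x_r$, applying the univariate bound to $T_n\beta$, and then the inductive hypothesis to each slice $f(\cdot,t_r)$ inside $\|\beta\|_{L^{p_r}}^{p_r}=\int_{-1}^{1}\|(\mathcal{K}_{r-1}f)(\cdot,t_r)\|_{(p_1,\dots,p_{r-1})}^{p_r}\,dt_r$ yields the factor $c^{1/p_r}\cdot c^{\sum_{i=1}^{r-1}1/p_i}$, which closes the induction. (That $K_nf$ is well defined is immediate, since $L^{\overrightarrow{\mathscr{P}}}(\mathcal{J})\subset L^1(\mathcal{J})$ on the bounded box $\mathcal{J}$.)

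The step I expect to be the main obstacle is choosing this order of the induction. The naive route of integrating first in $x_1$, then $x_2$, and so on, repeatedly needs Minkowski's integral inequality to interchange nested $L^{p_i}$-norms; because $p_1\le p_2\le\cdots\le p_r$, each interchange pushes a larger exponent inward and \emph{enlarges} the norm, so it only bounds $\|K_nf\|_{\overrightarrow{\mathscr{P}}}$ by the reversed mixed norm of $f$ (innermost exponent $p_r$, outermost $p_1$), which in general strictly exceeds $\|f\|_{\overrightarrow{\mathscr{P}}}$ and is useless. Peeling from the outside avoids this entirely: at every stage $\|\cdot\|_{(p_1,\dots,p_{r-1})}$ is already a norm and therefore commutes past the averaging structure of the outermost one-dimensional operator without any reordering of exponents, so the constant comes out exactly as stated.
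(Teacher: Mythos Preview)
Your argument is correct but takes a different route from the paper. The paper does \emph{not} peel variables from the outside; it integrates directly in the order $x_1,x_2,\dots$, applying Jensen's inequality at each level: first for $|\cdot|^{p_1}$ to move the power past the weighted sum and the Kantorovich average, then (after integrating out $x_1$) for $|\cdot|^{p_2/p_1}$ to move the next exponent inside, and so on. This hinges on the standing hypothesis $p_1\le p_2\le\cdots\le p_r$, which is exactly what makes each successive map $t\mapsto t^{p_{i+1}/p_i}$ convex. So the ``naive'' inside-out route does go through---via Jensen rather than Minkowski interchange---but at the price of the monotonicity assumption on the exponents.

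Your approach instead exploits the tensor structure $K_n=T_n^{(1)}\cdots T_n^{(r)}$, proves the one-variable $L^p$ bound once, and then inducts from the outermost variable using only that $\|\cdot\|_{(p_1,\dots,p_{r-1})}$ is a Banach function norm (triangle inequality plus the integral Minkowski inequality). The payoff is that you never need $p_{i+1}\ge p_i$: the constant $(\|\Psi_\rho\|_1/\Psi_\rho(2))^{\sum 1/p_i}$ comes out for \emph{any} tuple $\overrightarrow{\mathscr{P}}\in[1,\infty)^r$. In short, the paper's proof is a direct iterated-Jensen computation tied to the ordered-exponent setting, while yours is a cleaner operator-theoretic induction that in fact removes that ordering restriction.
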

	\begin{proof}
		For better mathematical visualization, we begin by demonstrating this result for bivariate case, i.e., $r=2$,  $\overrightarrow{\mathscr{P}}=(p_1,p_2)$, and further we will extend it for any $r \geq 3$, $r \in \NN.$
		%		 for $\overrightarrow{\mathscr{P}}=(p_1,...,p_r)$.
		Since $|\cdot|^{p_1}$ convex, we apply Jensen's inequality. Thus, we have
	\begin{flalign*}
		&\|K_nf\|_{(p_1,p_2)}^{p_2}\nonumber \\
		&= \int_{-1}^{1}\left(\int_{-1}^{1}\left(\frac {\displaystyle\sum_{k_2=-n}^{n-1}   \displaystyle\sum_{k_1=-n}^{n-1} \left|n^2\int_{\frac{k_2}{n}}^{\frac{k_2+1}{n}}\int_{\frac{k_1}{n}}^{\frac{k_1+1}{n}}f(t_1,t_2)dt_1dt_2\right| \Psi_{\rho}(nx_1-k_1,nx_2-k_2)}
		{\displaystyle\sum_{k_2=-n}^{n-1}  \displaystyle\sum_{k_1=-n}^{n-1} \Psi_{\rho}(nx_1-k_1,nx_2-k_2)} \right)^{p_1}dx_1\right)^{\frac{p_2}{p_1}}dx_2\\
		&= \int_{-1}^{1}\left(\int_{-1}^{1}\left(\frac {\displaystyle\sum_{k_2=-n}^{n-1}   \displaystyle\sum_{k_1=-n}^{n-1} \left|n^2\int_{\frac{k_2}{n}}^{\frac{k_2+1}{n}}\int_{\frac{k_1}{n}}^{\frac{k_1+1}{n}}f(t_1,t_2)dt_1dt_2\right|^{p_1} \Psi_{\rho}(nx_1-k_1,nx_2-k_2)}
		{\displaystyle\sum_{k_2=-n}^{n-1}  \displaystyle\sum_{k_1=-n}^{n-1} \Psi_{\rho}(nx_1-k_1,nx_2-k_2)} \right)dx_1\right)^{\frac{p_2}{p_1}}dx_2.
	\end{flalign*}
	Using	Jensen's inequality on integration, Lemma \ref{lem 2.3} and (\ref{we}), we get 
	\begin{flalign*}
		&\|K_nf\|_{(p_1,p_2)}^{p_2}\\
		&\leq \int_{-1}^{1}\left(\int_{-1}^{1}\frac{ \displaystyle\sum_{k_2=-n}^{n-1} \displaystyle\sum_{k_1=-n}^{n-1} \Psi_{\rho}(nx_{1}-k_1,nx_{2}-k_2)\hspace{3pt} n^2 \int_{\frac{k_2}{n}}^{\frac{k_2+1}{n}} \int_{\frac{k_1}{n}}^{\frac{k_1+1}{n}} |f(t_1,t_2)|^{p_1}dt_1dt_2}{\displaystyle \sum_{k_2=-n}^{n-1}\sum_{k_1=-n}^{n-1} \Psi_{\rho}(nx_{1}-k_1,nx_{2}-k_2)}	dx_1 \right)^{\frac{p_2}{p_1}}dx_2 \\
		&\leq \int_{-1}^{1}\left(\int_{-1}^{1}\frac{ \displaystyle\sum_{k_2=-n}^{n-1} \displaystyle\sum_{k_1=-n}^{n-1} \Psi_{\rho}(nx_{1}-k_1)\Psi_\rho(nx_{2}-k_2)\hspace{3pt} n^2 \int_{\frac{k_2}{n}}^{\frac{k_2+1}{n}} \int_{\frac{k_1}{n}}^{\frac{k_1+1}{n}} |f(t_1,t_2)|^{p_1}dt_1dt_2}{\displaystyle \sum_{k_2=-n}^{n-1}\sum_{k_1=-n}^{n-1} \Psi_{\rho}(nx_{1}-k_1)\Psi_\rho(nx_{2}-k_2)}	dx_1 \right)^{\frac{p_2}{p_1}}dx_2 \\
		&\leq \int_{-1}^{1}\left(\int_{-1}^{1}\frac{ \displaystyle\sum_{k_2=-n}^{n-1} \displaystyle\sum_{k_1=-n}^{n-1} \Psi_{\rho}(nx_{1}-k_1)\Psi_\rho(nx_{2}-k_2)\hspace{3pt} n^2 \int_{\frac{k_2}{n}}^{\frac{k_2+1}{n}} \int_{\frac{k_1}{n}}^{\frac{k_1+1}{n}} |f(t_1,t_2)|^{p_1}dt_1dt_2}{\Psi_\rho(2)\displaystyle \sum_{k_2=-n}^{n-1} \Psi_\rho(nx_{2}-k_2)}	dx_1 \right)^{\frac{p_2}{p_1}}dx_2 .
	\end{flalign*}
	Let us assume $nx_1-k_1=u_1$. Then, we have 
	\begin{flalign*}
		\|K_nf\|_{(p_1,p_2)}^{p_2}
		&\leq \int_{-1}^{1}\left(\frac{ \displaystyle\sum_{k_2=-n}^{n-1}  \Psi_\rho(nx_{2}-k_2)\hspace{3pt} n \int_{\frac{k_2}{n}}^{\frac{k_2+1}{n}} \int_{-1}^{1} |f(t_1,t_2)|^{p_1}dt_1dt_2 \int_{\RR}\Psi_{\rho}(u_1)du_1}{\Psi_\rho(2)\displaystyle \sum_{k_2=-n}^{n-1} \Psi_\rho(nx_{2}-k_2)}	 \right)^{\frac{p_2}{p_1}}dx_2 \\
		&\leq  \int_{-1}^{1}\left( \frac{\|\Psi_{\rho}\|_1}{\left(\Psi_{\rho}(2)\right)}\frac{\displaystyle\sum_{k_2=-n}^{n-1}  \Psi_\rho(nx_{2}-k_2)\hspace{3pt} n \int_{\frac{k_2}{n}}^{\frac{k_2+1}{n}} \int_{-1}^{1}|f(t_1,t_2)|^{p_1}dt_1dt_2}{\displaystyle\sum_{k_2=-n}^{n-1}  \Psi_\rho(nx_{2}-k_2)}	\right)^{\frac{p_2}{p_1}}dx_2.
		\end{flalign*}
		Using once again the convexity of the function $|\cdot|^{\frac{p_2}{p_1}}$, it follows from Jensen's inequality and Lemma \ref{lem 2.3} that 
		\begin{flalign*}
		\|K_nf\|_{(p_1,p_2)}^{p_2}	&\leq  \left(\frac{\|\Psi_{\rho}\|_1}{\left(\Psi_{\rho}(2)\right)}\right)^{\frac{p_2}{p_1}}\int_{-1}^{1}\left( \frac{\displaystyle\sum_{k_2=-n}^{n-1}  \Psi_\rho(nx_{2}-k_2)\hspace{3pt} n \int_{\frac{k_2}{n}}^{\frac{k_2+1}{n}} \int_{-1}^{1}|f(t_1,t_2)|^{p_1}dt_1dt_2}{\displaystyle\sum_{k_2=-n}^{n-1}  \Psi_\rho(nx_{2}-k_2)}	\right)^{\frac{p_2}{p_1}}dx_2\\
		&\leq  \frac{\|\Psi_{\rho}\|_1^{\frac{p_2}{p_1}}}{\left(\Psi_{\rho}(2)\right)^{1+\frac{p_2}{p_1}}}  \int_{-1}^{1}\displaystyle\sum_{k_2=-n}^{n-1}\Psi_\rho(nx_{2}-k_2)\hspace{2pt} \left( n \int_{\frac{k_2}{n}}^{\frac{k_2+1}{n}}\int_{-1}^{1}|f(t_1,t_2)|^{p_1}dt_1dt_2\right)^{\frac{p_2}{p_1}}dx_2.
	\end{flalign*}
	Substituting $nx_2-k_2=u_2$ and applying Jensen's inequality on integration, we obtain
	\begin{flalign*}
		\|K_nf\|_{(p_1,p_2)}^{p_2}
		&\leq \frac{\|\Psi_{\rho}\|_1^{\frac{p_2}{p_1}}}{\left(\Psi_{\rho}(2)\right)^{1+\frac{p_2}{p_1}}} \int_{\RR}\displaystyle\sum_{k_2=-n}^{n-1}\Psi_\rho(nx_{2}-k_2)\hspace{2pt}n\int_{\frac{k_2}{n}}^{\frac{k_2+1}{n}}\hspace{2pt} \left(  \int_{-1}^{1}|f(t_1,t_2)|^{p_1}dt_1\right)^{\frac{p_2}{p_1}}dt_2\hspace{2pt}dx_2\\
		&\leq \frac{\|\Psi_{\rho}\|_1^{\frac{p_2}{p_1}}}{\left(\Psi_{\rho}(2)\right)^{1+\frac{p_2}{p_1}}} \displaystyle\sum_{k_2=-n}^{n-1}\hspace{2pt}\int_{\frac{k_2}{n}}^{\frac{k_2+1}{n}}\hspace{2pt} \left( \int_{-1}^{1}|f(t_1,t_2)|^{p_1}dt_1\right)^{\frac{p_2}{p_1}}dt_2 \times  \int_{\RR} \Psi_\rho(u_2)du_2\\
		&\leq \left( \frac{\|\Psi_\rho\|_{1}}{\left(\Psi_{\rho}(2)\right)} \right)^{1+\frac{p_2}{p_1}} \int_{-1}^{1}\hspace{2pt} \left(  \int_{-1}^{1}|f(t_1,t_2)|^{p_1}dt_1\right)^{\frac{p_2}{p_1}}dt_2\\
		&=\left( \frac{\|\Psi_\rho\|_{1}}{\left(\Psi_{\rho}(2)\right)}\right)^{1+\frac{p_2}{p_1}} \hspace{2pt}\|f\|_{(p_1,p_2)}^{p_2}.
	\end{flalign*}
	This gives
	\begin{flalign*}
		\|K_nf\|_{(p_1,p_2)} \leq \left(  \frac{\|\Psi_\rho\|_{1}}{\left(\Psi_{\rho}(2)\right)}\right)^{\frac{1}{p_2}+\frac{1}{p_1}} \hspace{2pt}\|f\|_{(p_1,p_2)}.
	\end{flalign*}
	%				Now suppose that the result is true for $(r-1)$, that is,
	%									$$ \|K_{n}f\|_{(p_1,p_2,...,p_{r-1})} \leq \frac{1}{\left(\Psi_{\rho}(2)\right)^{\frac{r-1}{p_1}}} \hspace{3pt} 2^{\sum_{i=1}^{r-1}\frac{a_i}{p_i}}\|\Psi_{\rho}\|_{\underbrace{\scriptstyle{(1,..,1)}}_{r-1  \hspace{1pt}\text{times}}}^{\frac{1}{p_{r-2}}}\hspace{3pt}\|f\|_{(p_1,p_2,...,p_{r-1})} $$ \hspace{2pt} $\forall f\in L^{(p_1,p_2,...,p_{r-1})}([-1,1]^{(r-1)})$, where
	%								$$	a_i = 
	%									\begin{cases} 
		%										-(r-2), & \text{if } i = r-2 \\ 
		%										1, & \text{otherwise}
		%									\end{cases} \quad$$
	%									b_j = 
	%									\begin{cases} 
		%										-(r-3), & \text{if } j = r-2, \\ 
		%										0, & \text{if } j = r-1,\\	1, & \text{otherwise}
		%									\end{cases}
	
	%									Proceeding in a similar manner, we obtain
	%									\begin{equation*}
		%											\begin{split}
			%							\|K_{n}f\|_{\overrightarrow{\mathscr{P}}}
			%				&\leq \|K_{n}\left(\|f(.....,\cdot)\|_{({p_1,p_2,...,p_{r-1}})}\right)\|_{p_r}\\
			%			&\leq \left\lVert\frac{2^{\frac{p_{r-2}-p_{r-1}}{p_{r-1}p_{r-2}}}n^{\frac{1}{p_{r-3}}-\frac{1}{p_{r-2}}}\|\Psi_\rho(.....,\cdot)\|_{(1,..,1S)}^{\frac{1}{p_{r-2}}}}{\left(\Psi_{\rho}(2)\right)^{\frac{r-1}{p_1}}}\|f(.....,\cdot)\|_{({p_1,p_2,...,p_{r-1}})}\right\rVert_{p_r}\\
			%				&=\frac{ 2^{\frac{p_{r-1}-p_r}{p_{r}p_{r-1}}}n^{\frac{1}{p_{r-2}}-\frac{1}{p_{r-1}}}\|\Psi_{\rho}\|_{(1,1)}^{\frac{1}{p_{r-1}}}}{\left(\Psi_{\rho}(2)\right)^{\frac{r}{p_1}}}\hspace{3pt}\|f\|_{\overrightarrow{\mathscr{P}}}.
			%					\end{split}
		%					\end{equation*}
	
	Proceeding in a similar manner for higher dimension case, we obtain
	\begin{flalign*}
		\|K_{n}f\|_{\overrightarrow{\mathscr{P}}} 
		%					\leq \|K_{n}\left(\|f(.....,\cdot)\|_{({p_1,p_2,...,p_{r-1}})}\right)\|_{p_r}\\
		\leq \left(\frac{\|\Psi_{\rho}\|_{1}}{\left(\Psi_{\rho}(2)\right)} \right)^{\sum_{i=1}^{r}\frac{1}{p_i}}  \hspace{3pt}\|f\|_{\overrightarrow{\mathscr{P}}}.
	\end{flalign*}

\end{proof}
%		
%	In the following remark, we discuss the equivalent condition of property (b) of Remark \ref{rem 2.4}.
\begin{rem}\label{re1}
	The following condition is equivalent to property (b) of Remark \ref{rem 2.4}
	\begin{equation} \label{eq 3.4}
		\Psi_{\rho}^{\wedge}(k) =
		\begin{cases}
			{1,} &\quad\text{} \ \  {k=0},\\
			{ 0,} &\quad\text{} \ \  {k \neq 0},\\
		\end{cases}
	\end{equation}
	for $k \in \ZZ,$ the Fourier transform of $\Psi_{\rho}$ is given by $\Psi_{\rho}^{\wedge}(v)=\int_{\RR}\Psi_{\rho}(u)e^{-iuv}du$. Consequently, from (\ref{eq 3.4}), it follows that $\int_{\RR}\Psi_{\rho}(u)du=1$. Moreover
	$$\int_{\RR}\cdot \cdot \cdot\int_{\RR}\Psi_{\rho}(\mathbf{u})d\mathbf{u}=\int_{\RR}\cdot \cdot \cdot\int_{\RR}\prod_{i=1}^{r}\Psi_{\rho}(u_i)d\mathbf{u}=\prod_{i=1}^{r}\left(\int_{\RR}\cdot \cdot \cdot\int_{\RR}\Psi_{\rho}(u_i)du_i\right)=1.$$
\end{rem}
Now using Remark \ref{re1}, Theorem \ref{thm 3.1} deduce to 
\begin{flalign*}
	\|K_nf\|_{\overrightarrow{\mathscr{P}}} \leq 	\frac{\|f\|_{\overrightarrow{\mathscr{P}}}}{\left(\Psi_{\rho}(2)\right)^{\sum_{i=1}^{r}\frac{1}{p_i}}}  \hspace{3pt}.
\end{flalign*}
%		In a similar manner, we obtain

%	\begin{lemma} \label{lem 3.2}
	%	The space $C_{c}(\mathcal{M})$ is densely contained within $L^{\overrightarrow{\mathscr{P}}}(\mathcal{M})$, for $\overrightarrow{\mathscr{P}}\in [1,\infty)^{n}.$
	%\end{lemma}
	%\begin{proof}
	%	
	%\end{proof}
	
	% In order to show the convergence of the family \( (K_n f) \) in $L^{\overrightarrow{\mathscr{P}}}(\mathcal{J})$, we first prove the following.
	
	The following theorem addresses the convergence of the family \( (K_n f) \) in \( C(\mathcal{J}) \).
	
	\begin{thm}\label{pcgs}
		For $f \in C(\mathcal{J})$ and any $\epsilon>0$, $\exists\hspace{1pt} n\in \NN$ such that
		$$\|K_nf-f\|_{\overrightarrow{\mathscr{P}}} <\epsilon.$$ 
	\end{thm}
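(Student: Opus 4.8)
The plan is to deduce the result from the uniform convergence $\|K_nf-f\|_\infty\to 0$ together with the elementary embedding of $C(\mathcal{J})$ into $L^{\overrightarrow{\mathscr{P}}}(\mathcal{J})$, which is available because $\mathcal{J}$ has finite Lebesgue measure. Concretely, for any $g\in C(\mathcal{J})$ one iterates the trivial bound $\int_{-1}^{1}|g|^{p_j}\,dx_j\le 2\|g\|_\infty^{p_j}$ through the nested integrals defining $\|\cdot\|_{\overrightarrow{\mathscr{P}}}$ to get $\|g\|_{\overrightarrow{\mathscr{P}}}\le 2^{\sum_{i=1}^{r}1/p_i}\|g\|_\infty=:C_{\overrightarrow{\mathscr{P}}}\|g\|_\infty$. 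Since $\Psi_\rho$ is continuous and the denominator $\sum_{\mathbf{k}=-n}^{n-1}\Psi_\rho(n\mathbf{x}-\mathbf{k})$ is bounded below by $(\Psi_\rho(2))^r>0$ (Lemma \ref{lem 2.3}), each $K_nf$ is continuous, so $K_nf-f\in C(\mathcal{J})$ and the above comparison applies to it. Thus it suffices to find, for a given $\epsilon>0$, an $n$ with $\|K_nf-f\|_\infty<\epsilon/C_{\overrightarrow{\mathscr{P}}}$.

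To estimate $\|K_nf-f\|_\infty$, I would fix $\mathbf{x}\in\mathcal{J}$, use the positivity of the denominator to write $f(\mathbf{x})$ as $f(\mathbf{x})$ times the ratio $\big(\sum_{\mathbf{k}}\Psi_\rho(n\mathbf{x}-\mathbf{k})\big)/\big(\sum_{\mathbf{k}}\Psi_\rho(n\mathbf{x}-\mathbf{k})\big)$, and (since $n^r\int_{I_{\mathbf{k},n}}d\mathbf{t}=1$) move it inside the numerator sum to obtain
$$|K_n(f,\mathbf{x})-f(\mathbf{x})|\le\frac{\displaystyle\sum_{\mathbf{k}=-n}^{n-1}\Big(n^r\int_{I_{\mathbf{k},n}}|f(\mathbf{t})-f(\mathbf{x})|\,d\mathbf{t}\Big)\Psi_\rho(n\mathbf{x}-\mathbf{k})}{\displaystyle\sum_{\mathbf{k}=-n}^{n-1}\Psi_\rho(n\mathbf{x}-\mathbf{k})}.$$
The decisive point is a localization estimate: by Lemma \ref{lem1}(iii), $\Psi_\rho(n\mathbf{x}-\mathbf{k})\neq 0$ forces $|nx_i-k_i|<2m$ for every $i$, and since $\mathbf{t}\in I_{\mathbf{k},n}$ gives $|t_i-k_i/n|\le 1/n$, the triangle inequality yields $\|\mathbf{t}-\mathbf{x}\|_\infty<(2m+1)/n$. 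Given $\epsilon'>0$, uniform continuity of $f$ on the compact cube $\mathcal{J}$ provides $\delta>0$ with $\|\mathbf{t}-\mathbf{x}\|_\infty<\delta\Rightarrow|f(\mathbf{t})-f(\mathbf{x})|<\epsilon'$; choosing $n>(2m+1)/\delta$ then bounds every non-vanishing numerator term by $\epsilon'\,\Psi_\rho(n\mathbf{x}-\mathbf{k})$, so the ratio, and hence $\|K_nf-f\|_\infty$, is at most $\epsilon'$, uniformly in $\mathbf{x}$.

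Combining the two steps: given $\epsilon>0$, apply the second step with $\epsilon'=\epsilon/C_{\overrightarrow{\mathscr{P}}}$ to produce $n$ with $\|K_nf-f\|_\infty<\epsilon/C_{\overrightarrow{\mathscr{P}}}$, and then $\|K_nf-f\|_{\overrightarrow{\mathscr{P}}}\le C_{\overrightarrow{\mathscr{P}}}\|K_nf-f\|_\infty<\epsilon$.

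I expect the only genuine content to be the localization step: recognizing, via the compact support of $\Psi_\rho$ (Lemma \ref{lem1}(iii)), that only indices $\mathbf{k}$ with $\mathbf{k}/n$ within $O(1/n)$ of $\mathbf{x}$ contribute, so that the oscillation of $f$ over the tiny cube $I_{\mathbf{k},n}$ is controlled by uniform continuity. The remaining ingredients — well-definedness and continuity of $K_nf$, the lower bound $(\Psi_\rho(2))^r$ on the denominator, the sup-to-mixed-norm comparison, and the fact that $I_{\mathbf{k},n}\subseteq\mathcal{J}$ for $-n\le\mathbf{k}\le n-1$ so that $f$ is evaluated only on its domain — are routine bookkeeping.
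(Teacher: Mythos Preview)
Your proposal is correct and follows essentially the same approach as the paper: reduce to the uniform estimate $\|K_nf-f\|_\infty\to 0$ and then use the embedding $\|\cdot\|_{\overrightarrow{\mathscr{P}}}\le 2^{\sum_{i=1}^r 1/p_i}\|\cdot\|_\infty$. The only difference is that the paper simply cites \cite[Theorem 4.1]{spigler} for the uniform convergence, whereas you supply a self-contained argument via the compact support of $\Psi_\rho$ (Lemma~\ref{lem1}(iii)) and the uniform continuity of $f$ on $\mathcal{J}$; your version is slightly more informative but not a genuinely different route.
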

	\begin{proof}
		%We begin by verifying the result for $n=2.$
		In view of \cite[Theorem 4.1]{spigler}, for $\displaystyle \epsilon_1 :=\frac{\displaystyle \epsilon}{2^{\frac{1}{p_1}+\frac{1}{p_2}}} $, there exists $N_0 \in \NN$ such that
		$	\|K_nf-f\|_{\infty} <\epsilon_1$, \hspace{1pt} for $n\geq N_0.$
		%	From (\ref{eps}), we obtain
		This gives
		\begin{flalign*}
			\|K_nf-f\|_{\overrightarrow{\mathscr{P}}} &=\left(\int_{-1}^{1}\left(\int_{-1}^{1}|(K_nf)(x_1,x_2)-f(x_1,x_2)| dx_1\right)^{\frac{p_2}{p_1}}dx_2\right)^{\frac{1}{p_2}}\\
			&\leq  2^{\left(\frac{1}{p_1}+\frac{1}{p_2}\right)}\|K_nf-f\|_{\infty}\\
			&< \epsilon.
		\end{flalign*}
%		$\forall$ $ n \geq N_o$, for some sufficiently large $N_o \in \NN.$
	\end{proof}
	
	To establish the convergence of the multivariate Kantorovich-type NN operators we utilize the following lemma.
	
	\begin{lemma}\label{vis}
		\cite[Theorem 2.7]{pandey}	The space of continuous functions $C(\mathcal{J})$
		is dense in $L^{\overrightarrow{\mathscr{P}}}(\mathcal{J})$ for $\overrightarrow{\mathscr{P}} \in [1,\infty)^r$.
	\end{lemma}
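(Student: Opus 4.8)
The plan is to run the classical three-step density argument---approximate by simple functions, reduce to indicators, approximate indicators by continuous functions---while carefully tracking how the anisotropic structure of $\|\cdot\|_{\overrightarrow{\mathscr{P}}}$ behaves under each step. Two structural observations will be used repeatedly. First, a boundedness estimate: for $g\in M(\mathcal{J})$ with $|g|\le M$ a.e., monotonicity of integration and of $t\mapsto t^{s}$ on $[0,\infty)$ gives $\|g\|_{\overrightarrow{\mathscr{P}}}\le M\,\|\chi_{\mathcal{J}}\|_{\overrightarrow{\mathscr{P}}}$, and $\|\chi_{\mathcal{J}}\|_{\overrightarrow{\mathscr{P}}}<\infty$ because $\mathcal{J}=[-1,1]^r$ has finite measure. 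Second, a dominated-convergence principle: if $g_m\to 0$ a.e.\ on $\mathcal{J}$ and $|g_m|\le h\in L^{\overrightarrow{\mathscr{P}}}(\mathcal{J})$, then $\|g_m\|_{\overrightarrow{\mathscr{P}}}\to 0$; this follows by applying the one-dimensional dominated convergence theorem first to the innermost integral $\int_{-1}^{1}|g_m|^{p_1}\,dx_1$ and then proceeding outward, the dominating function at each layer being the corresponding partial norm of $h$, which is finite a.e.\ precisely because $\|h\|_{\overrightarrow{\mathscr{P}}}<\infty$.

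Given these, first I would reduce to bounded functions: for $f\in L^{\overrightarrow{\mathscr{P}}}(\mathcal{J})$ set $f_M:=\operatorname{sgn}(f)\min(|f|,M)$; then $|f-f_M|\le|f|\in L^{\overrightarrow{\mathscr{P}}}(\mathcal{J})$ and $f_M\to f$ pointwise, so $\|f-f_M\|_{\overrightarrow{\mathscr{P}}}\to 0$ by the dominated-convergence principle. A bounded measurable function on the finite-measure set $\mathcal{J}$ is a uniform limit of simple functions, hence, by the boundedness estimate, also a $\|\cdot\|_{\overrightarrow{\mathscr{P}}}$-limit of simple functions. Thus it suffices to approximate, in $\|\cdot\|_{\overrightarrow{\mathscr{P}}}$, an arbitrary indicator $\chi_E$ with $E\subseteq\mathcal{J}$ measurable by continuous functions.

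For this I would invoke regularity of Lebesgue measure: given $\varepsilon>0$ choose a compact $K$ and a bounded open $U$ with $K\subseteq E\subseteq U$ and $|U\setminus K|<\varepsilon$, and (e.g.\ via $g(x)=d(x,U^{c})/(d(x,U^{c})+d(x,K))$) a continuous $g:\RR^r\to[0,1]$ with $g\equiv 1$ on $K$ and $g\equiv 0$ off $U$. Then $|\chi_E-g|\le\chi_A$ a.e.\ on $\mathcal{J}$ with $A:=(U\setminus K)\cap\mathcal{J}$, $|A|<\varepsilon$, so $\|\chi_E-g\|_{\overrightarrow{\mathscr{P}}}\le\|\chi_A\|_{\overrightarrow{\mathscr{P}}}$. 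It remains to show $\|\chi_A\|_{\overrightarrow{\mathscr{P}}}\to 0$ as $|A|\to 0$, and this is where the standing assumption $p_1\le p_2\le\cdots\le p_r$ enters. Writing $h_1(x_2,\dots,x_r)=\int_{-1}^{1}\chi_A\,dx_1\in[0,2]$, the elementary inequality $t^{p_2/p_1}\le 2^{\,p_2/p_1-1}\,t$ for $t\in[0,2]$ (valid since $p_2/p_1\ge 1$) gives $\int_{-1}^{1}h_1^{p_2/p_1}\,dx_2\le 2^{\,p_2/p_1-1}\int_{-1}^{1}h_1\,dx_2$; iterating this one-dimensional estimate through the remaining $r-1$ layers---each partial integral being bounded because every integration is over $[-1,1]$, and Fubini returning the scalar $|A|$ at the end---yields $\|\chi_A\|_{\overrightarrow{\mathscr{P}}}^{p_r}\le C_{r,\overrightarrow{\mathscr{P}}}\,|A|$, hence $\|\chi_A\|_{\overrightarrow{\mathscr{P}}}\le C^{1/p_r}|A|^{1/p_r}$. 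Taking $\varepsilon$ small then finishes the argument.

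I expect the genuine obstacle to be precisely this last estimate: unlike in ordinary $L^p$, the mixed norm of an indicator is not a power of the set's measure but depends on the geometry of $A$ through all its iterated slices, so one must actually use $p_1\le\cdots\le p_r$ (via the layered convexity bound $t^{s}\le(\text{bd})^{s-1}t$) to recover clean $|A|$-linear control, and the bookkeeping of this iteration---together with that of the dominated-convergence principle---for general $r$ is the part requiring care. An alternative that bypasses the indicator estimate is to extend $f$ by zero to $\RR^r$, mollify by an approximate identity, and use the Young-type convolution inequality and continuity of translations in $L^{\overrightarrow{\mathscr{P}}}(\RR^r)$ from Benedek--Panzone \cite{dct}; I would nonetheless prefer the simple-function route, which keeps every estimate elementary on the bounded cube.
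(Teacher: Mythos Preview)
Your argument is correct. Note that the paper does not itself prove this lemma---it is quoted from \cite[Theorem~2.7]{pandey}---so there is no in-paper proof to compare against directly. The closest analogue the paper does prove is Lemma~\ref{lem 4.2} (density of $C(\mathcal{J})$ in the mixed norm Orlicz space $L^{\overrightarrow{\Phi}}(\mathcal{J})$), and that proof follows the same Urysohn-plus-simple-functions skeleton you outline. The one substantive difference is how the indicator step is closed: the paper simply invokes the mixed-norm dominated convergence theorem from \cite[Section~2]{dct} to conclude $I^{\overrightarrow{\Phi}}(\bar\lambda\,\chi_{D_m\setminus C_m})\to 0$, whereas you establish the quantitative bound $\|\chi_A\|_{\overrightarrow{\mathscr{P}}}\le C_{r,\overrightarrow{\mathscr{P}}}\,|A|^{1/p_r}$ by iterating $t^{s}\le M^{s-1}t$ layer by layer. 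Your estimate is more explicit and sidesteps the need to arrange a.e.\ convergence of the indicator sequence; on the other hand it genuinely uses the standing ordering $p_1\le\cdots\le p_r$, while the DCT route---which you have in fact already set up in your first paragraph---would deliver the result for arbitrary $\overrightarrow{\mathscr{P}}\in[1,\infty)^r$.
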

	
	We now show that $(K_nf)$ converges within mixed norm Lebesgue space $L^{\overrightarrow{\mathscr{P}}}(\mathcal{J})$.
	% using the standard density argument.
	%	Now, we prove that Kantorovich-type neural network operators are dense in the space $ L^{\overrightarrow{\mathscr{P}}}(\mathcal{M})$ with respect to the $\|\cdot\|_{\overrightarrow{\mathscr{P}}}$ norm.
	\begin{thm} \label{thm 3.3}
		Let $f\in L^{\overrightarrow{\mathscr{P}}}(\mathcal{J})$ and for any $\epsilon>0$, $\exists\hspace{1pt} n\in \NN$ such that 
		$$ \|f-K_{n}f\|_{\overrightarrow{\mathscr{P}}}<\epsilon. $$
	\end{thm}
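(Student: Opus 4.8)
The plan is to use the standard density argument, combining the uniform boundedness of the operators $K_n$ on $L^{\overrightarrow{\mathscr{P}}}(\mathcal{J})$ (Theorem~\ref{thm 3.1}), the convergence on the dense subspace $C(\mathcal{J})$ (Theorem~\ref{pcgs}), and the density of $C(\mathcal{J})$ in $L^{\overrightarrow{\mathscr{P}}}(\mathcal{J})$ (Lemma~\ref{vis}). Concretely, fix $f\in L^{\overrightarrow{\mathscr{P}}}(\mathcal{J})$ and $\epsilon>0$. Let $C:=\left(\|\Psi_{\rho}\|_{1}/\Psi_{\rho}(2)\right)^{\sum_{i=1}^{r}1/p_i}$ denote the operator-norm bound from Theorem~\ref{thm 3.1}, so that $\|K_n g\|_{\overrightarrow{\mathscr{P}}}\le C\,\|g\|_{\overrightarrow{\mathscr{P}}}$ for every $g\in L^{\overrightarrow{\mathscr{P}}}(\mathcal{J})$ and every $n\in\NN$.

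First I would invoke Lemma~\ref{vis} to pick $g\in C(\mathcal{J})$ with $\|f-g\|_{\overrightarrow{\mathscr{P}}}<\epsilon/(2(1+C))$. Then I would split, using the triangle inequality in the (genuine) norm $\|\cdot\|_{\overrightarrow{\mathscr{P}}}$,
\begin{equation*}
\|f-K_nf\|_{\overrightarrow{\mathscr{P}}}\le \|f-g\|_{\overrightarrow{\mathscr{P}}}+\|g-K_ng\|_{\overrightarrow{\mathscr{P}}}+\|K_ng-K_nf\|_{\overrightarrow{\mathscr{P}}}.
\end{equation*}
The first term is below $\epsilon/(2(1+C))\le\epsilon/2$ by the choice of $g$. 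The third term is controlled by linearity of $K_n$ and Theorem~\ref{thm 3.1}: $\|K_n(g-f)\|_{\overrightarrow{\mathscr{P}}}\le C\,\|g-f\|_{\overrightarrow{\mathscr{P}}}<C\epsilon/(2(1+C))\le\epsilon/2$. For the middle term, since $g\in C(\mathcal{J})$, Theorem~\ref{pcgs} supplies an $N_0\in\NN$ with $\|g-K_ng\|_{\overrightarrow{\mathscr{P}}}<\epsilon/2$ for all $n\ge N_0$. Wait --- to make the three pieces sum to $\epsilon$ I should instead allocate $\epsilon/3$ to each: choose $g$ with $\|f-g\|_{\overrightarrow{\mathscr{P}}}<\epsilon/(3(1+C))$, so the first term is $<\epsilon/3$ and the third is $<C\epsilon/(3(1+C))<\epsilon/3$, and apply Theorem~\ref{pcgs} to $g$ with tolerance $\epsilon/3$ to get the middle term $<\epsilon/3$ for $n\ge N_0$. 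Summing, $\|f-K_nf\|_{\overrightarrow{\mathscr{P}}}<\epsilon$ for any $n\ge N_0$, which proves the claim.

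There is really no serious obstacle here; the only points requiring a little care are bookkeeping ones. One must make sure $K_n$ is applied only to functions genuinely in $L^{\overrightarrow{\mathscr{P}}}(\mathcal{J})$ (true for both $f$ and $g$, the latter since $C(\mathcal{J})\subset L^{\overrightarrow{\mathscr{P}}}(\mathcal{J})$ on the bounded domain $\mathcal{J}$), that the triangle inequality is being used for the actual norm and not a modular, and that the constant $C$ is independent of $n$ --- which is exactly the content of Theorem~\ref{thm 3.1}. One should also note that $K_n$ is linear, so $K_ng-K_nf=K_n(g-f)$; this is immediate from the definition~\eqref{multi}. If desired, one can further simplify the constant using Remark~\ref{re1}, replacing $C$ by $\left(\Psi_{\rho}(2)\right)^{-\sum_{i=1}^{r}1/p_i}$, but this is cosmetic and not needed for the argument.
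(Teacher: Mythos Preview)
Your proposal is correct and follows essentially the same route as the paper: density of $C(\mathcal{J})$ (Lemma~\ref{vis}), the uniform operator bound (Theorem~\ref{thm 3.1}), and convergence on continuous functions (Theorem~\ref{pcgs}), combined via the triangle inequality. The only cosmetic differences are that the paper uses the simplified constant $(\Psi_\rho(2))^{-\sum_i 1/p_i}$ from Remark~\ref{re1} and groups the first and third terms together as $(1+C)\|f-g\|_{\overrightarrow{\mathscr{P}}}<\epsilon/2$ --- which, incidentally, is exactly what your first $\epsilon/2$-allocation already achieves, so your detour to $\epsilon/3$ was unnecessary.
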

	\begin{proof}
		%		This result is established through the standard density argument. 
		Let $f\in L^{\overrightarrow{\mathscr{P}}}(\mathcal{J})$ and $\epsilon >0$. In view of Lemma \ref{vis}, there exists $g \in C(\mathcal{J})$ such that 
		$$\|f-g\|_{\overrightarrow{\mathscr{P}}}< \frac{\epsilon}{2\left(\frac{\|f\|_{\overrightarrow{\mathscr{P}}}}{\left(\Psi_{\rho}(2)\right)^{\sum_{i=1}^{r}\frac{1}{p_i}}}  \hspace{3pt}+1\right)}.$$
		Hence using Theorem \ref{thm 3.1}, \ref{pcgs} and the triangle inequality for $\|\cdot\|_{\overrightarrow{\mathscr{P}}}$, we have
		\begin{equation*}
			\begin{split}
				\|f-K_{n}f\|_{\overrightarrow{\mathscr{P}}} &\leq \|f-g\|_{\overrightarrow{\mathscr{P}}}+\|g-K_{n}g\|_{\overrightarrow{\mathscr{P}}}+\|K_{n}g-K_{n}f\|_{\overrightarrow{\mathscr{P}}}\\
				&\leq \left(\frac{\|f\|_{\overrightarrow{\mathscr{P}}}}{\left(\Psi_{\rho}(2)\right)^{\sum_{i=1}^{r}\frac{1}{p_i}}}  \hspace{3pt}+1\right)\|f-g\|_{\overrightarrow{\mathscr{P}}}+\|g-K_{n}g\|_{\overrightarrow{\mathscr{P}}} \\
				& <{\frac{\epsilon}{2}} + \|g-K_{n}g\|_{\overrightarrow{\mathscr{P}}}\\
				& <{\frac{\epsilon}{2}} + {\frac{\epsilon}{2}}=\epsilon,
			\end{split}
		\end{equation*}
		$\forall \hspace{1pt} n \geq N_{0}$, for some $N_{0} \in \NN.$
	\end{proof}

Following this we have studied the convergence of (\ref{multi}) in mixed norm Lebesgue spaces $L^{ \overrightarrow{\mathscr{P}}}(\mathcal{J})$. Now we will generalize this study by extending our results to a more general setting function spaces namely mixed norm Orlicz Space $L^{ \overrightarrow{\Phi}}(\mathcal{J})$ in the following section.
\section{Multivariate Kantorovich-type NN Operators in mixed norm Orlicz Space}\label{4}
%	As discussed in Section \ref{2}, the notion of modular convergence is suitable for mixed norm Orlicz spaces.
%As discussed in Section \ref{2}, the notion of modular convergence is well-suitable for Orlicz spaces.
%In the present section, we discuss boundedness and convergence of multivariate Kantorovich-type NN operators in $L^{ \overrightarrow{\Phi}}(\mathcal{J})$. 
This section aims to establish the convergence of  multivariate Kantorovich-type NN operators within the framework of mixed norm Orlicz spaces $L^{ \overrightarrow{\Phi}}(\mathcal{J})$. To this end, we utilize the boundedness of the operator defined in (\ref{multi}) and the denseness of $C(\mathcal{J})$ in   $L^{ \overrightarrow{\Phi}}(\mathcal{J})$.
In the following theorem, we discuss the boundedness of the operator (\ref{multi})
within $L^{ \overrightarrow{\Phi}}(\mathcal{J})$ using modular functional $I^{\overrightarrow{\Phi}}.$
\begin{thm}\label{thm 4.1}
	Let $\overrightarrow{\Phi}=(\phi_1,\phi_2,...,\phi_r)$  and $\lambda >0.$ For $f \in L^{ \overrightarrow{\Phi}}(\mathcal{J})$,  there holds
	\begin{equation}\label{5.1}
		I^{\overrightarrow{\Phi}}(\lambda \hspace{2pt}K_{n}f) \leq  I^{\overrightarrow{\Phi}}(\lambda' f), \quad \text{where } \lambda'=\frac{\lambda}{\left(\Psi_\rho(2)\right)^r}>0.\end{equation}
\end{thm}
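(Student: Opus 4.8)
The plan is to follow the coordinate-by-coordinate scheme used to prove Theorem \ref{thm 3.1}, replacing each power $|\cdot|^{p_i}$ by the Orlicz function $\phi_i$. The one genuinely new ingredient is that $\phi_i$ is not positively homogeneous, so the constants $\Psi_{\rho}(2)^{-1}$ that appear cannot be pulled out of the modular $I^{\overrightarrow{\Phi}}$; instead each of them (being $\ge 1$, since $0<\Psi_{\rho}(2)\le 1$) will be transported into the argument of $f$ by the convexity inequality $c\,\phi(x)\le\phi(cx)$, valid for $c\ge 1$ and any Orlicz function $\phi$ (a consequence of convexity and $\phi(0)=0$; see Definition \ref{xx}). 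I would first carry out the case $r=2$, $\overrightarrow{\Phi}=(\phi_1,\phi_2)$, and then indicate the induction on $r$.

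For $r=2$ write $I^{\overrightarrow{\Phi}}(\lambda K_nf)=\int_{-1}^1\phi_2\!\big(\int_{-1}^1\phi_1(\lambda|K_nf(x_1,x_2)|)\,dx_1\big)\,dx_2$. For the inner integrand: by the triangle inequality and $\Psi_{\rho}\ge 0$, $\lambda|K_nf(x_1,x_2)|$ is bounded by the $\Psi_{\rho}$-weighted average of the numbers $\lambda\,\big|n^2\!\int_{I_{\mathbf{k},n}}f\big|$; apply $\phi_1$ (non-decreasing), then Jensen's discrete inequality with the probability weights $\Psi_{\rho}(nx_1-k_1)\Psi_{\rho}(nx_2-k_2)/\sum\Psi_{\rho}$, and then Jensen's integral inequality to each Kantorovich mean $n^2\!\int_{I_{\mathbf{k},n}}$ (a genuine average over a cube of measure $n^{-2}$), which brings $\phi_1(\lambda|\cdot|)$ inside the integrals. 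Integrating in $x_1$ over $[-1,1]$, bound the denominator below by $\Psi_{\rho}(2)$ (Remark \ref{NN}), use the substitution $nx_1-k_1=u_1$ together with $\int_{\mathbb{R}}\Psi_{\rho}=1$ from \eqref{ce} so that $\int_{-1}^1\Psi_{\rho}(nx_1-k_1)\,dx_1\le n^{-1}$, and reassemble $\sum_{k_1}\int_{k_1/n}^{(k_1+1)/n}=\int_{-1}^1$. Finally push the leftover factor $\Psi_{\rho}(2)^{-1}\ge 1$ inside $\phi_1$ via $c\,\phi_1(x)\le\phi_1(cx)$. This gives
\[
\int_{-1}^1\phi_1(\lambda|K_nf(x_1,x_2)|)\,dx_1\ \le\ \frac{\displaystyle\sum_{k_2=-n}^{n-1}\Psi_{\rho}(nx_2-k_2)\; n\!\int_{k_2/n}^{(k_2+1)/n}\! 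H(t_2)\,dt_2}{\displaystyle\sum_{k_2=-n}^{n-1}\Psi_{\rho}(nx_2-k_2)},\qquad H(t_2):=\int_{-1}^1\phi_1\!\Big(\tfrac{\lambda}{\Psi_{\rho}(2)}\,|f(t_1,t_2)|\Big)dt_1 .
\]

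Now repeat exactly the same three moves with $\phi_2$ in place of $\phi_1$: apply $\phi_2$ (monotone), Jensen's discrete inequality in $k_2$, and Jensen's integral inequality on each interval $[k_2/n,(k_2+1)/n]$; then integrate in $x_2$, again using Remark \ref{NN} and $\int_{-1}^1\Psi_{\rho}(nx_2-k_2)\,dx_2\le n^{-1}$, to reach $I^{\overrightarrow{\Phi}}(\lambda K_nf)\le \Psi_{\rho}(2)^{-1}\int_{-1}^1\phi_2(H(t_2))\,dt_2$. Transporting the surviving factor $\Psi_{\rho}(2)^{-1}\ge 1$ first through $\phi_2$ and then through $\phi_1$ (each time by $c\,\phi(x)\le\phi(cx)$) turns the constant in $H$ from $\tfrac{\lambda}{\Psi_{\rho}(2)}$ into $\tfrac{\lambda}{\Psi_{\rho}(2)^2}$, which is precisely $\lambda'$ in the case $r=2$. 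For general $r$, peeling the coordinates $x_1,\dots,x_r$ one at a time produces exactly one factor $\Psi_{\rho}(2)^{-1}$ per coordinate, yielding $\lambda'=\lambda/\Psi_{\rho}(2)^{\,r}$.

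I expect the only delicate point to be the bookkeeping of the $\Psi_{\rho}(2)^{-1}$ factors through the nested Orlicz functions, rather than any individual estimate: Jensen's inequality, monotonicity of $\phi_i$, the lower bound $\sum_{k=-n}^{n-1}\Psi_{\rho}(nx-k)\ge\Psi_{\rho}(2)$ and the change of variables are all routine. This is exactly where the argument departs from the $L^{\overrightarrow{\mathscr{P}}}$ case: lacking homogeneity of $\phi_i$, one must check that every such factor, being $\ge 1$, can be pushed through all the outer $\phi_j$'s into the innermost argument, so that the factors accumulate multiplicatively inside $\lambda'$ instead of appearing as a prefactor of the modular.
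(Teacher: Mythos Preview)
Your proposal is correct and follows essentially the same route as the paper: Jensen's inequality (discrete on the $\Psi_\rho$-weights, then integral on each Kantorovich cell), the lower bound $\sum_{k}\Psi_\rho(nx-k)\ge\Psi_\rho(2)$, the substitution $nx_i-k_i=u_i$ together with $\int_{\mathbb R}\Psi_\rho=1$, and finally the convexity trick $c\,\phi(x)\le\phi(cx)$ for $c\ge1$ to move the accumulated factors $\Psi_\rho(2)^{-1}$ into the argument of $f$. The only cosmetic difference is timing: you push each $\Psi_\rho(2)^{-1}$ inside $\phi_1$ as soon as it appears, whereas the paper keeps one factor inside the argument of $\phi_2$ and one outside, and only at the very end transports both into $\phi_1$; the endpoint $\lambda'=\lambda/\Psi_\rho(2)^r$ is the same. (Incidentally, your observation $0<\Psi_\rho(2)\le1$ is the correct one; the paper's line ``Since $\Psi_\rho(2)\ge1$'' is a typo for $\Psi_\rho(2)^{-1}\ge1$.)
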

\begin{proof}
	We begin by examining  the result for $r=2, i.e., \overrightarrow{\Phi}=(\phi_1,\phi_2)$. Using Jensen's inequality twice, we obtain
	\begin{flalign*}
		&I^{\phi_1,\phi_2}(\lambda K_{n}f)\\
		&= \int_{-1}^{1} \phi_2\left( \int_{-1}^{1} \phi_1\left(\frac{\lambda \displaystyle\sum_{k_2=-n}^{n-1}\displaystyle \sum_{k_1=-n}^{n-1} \Psi_{\rho}(nx_{1}-k_1,nx_{2}-k_2)\hspace{2pt} n^{2} \int_{\frac{k_2}{n}}^{\frac{k_2+1}{n}}\int_{\frac{k_1}{n}}^{\frac{k_1+1}{n}}|f(t_1,t_2)|dt_1dt_2}{\displaystyle \sum_{k_2=-n}^{n-1}\sum_{k_1=-n}^{n-1} \Psi_{\rho}(nx_{1}-k_1,nx_{2}-k_2)} \right) dx_1\right) dx_2\\
		%	&\leq \int_{-1}^{1} \phi_2\left( \int_{-1}^{1} \phi_1\left(\lambda'\displaystyle \sum_{k_1=-n}^{n-1} \displaystyle\sum_{k_2=-n}^{n-1} \Psi_{\rho}(nx_{1}-k_1,nx_{2}-k_2)\hspace{2pt} n^{2} \int_{\frac{k_2}{n}}^{\frac{k_2+1}{n}}\int_{\frac{k_1}{n}}^{\frac{k_1+1}{n}}|f(t_1,t_2)|dt_1dt_2 \right) dx_1\right) dx_2\\
		&\leq \int_{-1}^{1}\phi_2\left(\int_{-1}^{1}\frac{ \displaystyle\sum_{k_2=-n}^{n-1} \displaystyle\sum_{k_1=-n}^{n-1} \Psi_{\rho}(nx_{1}-k_1,nx_{2}-k_2)\hspace{2pt} n^2 \int_{\frac{k_2}{n}}^{\frac{k_2+1}{n}} \int_{\frac{k_1}{n}}^{\frac{k_1+1}{n}} \phi_{1}\left(\lambda|f(t_1,t_2)|\right)dt_1dt_2}{\displaystyle \sum_{k_2=-n}^{n-1}\sum_{k_1=-n}^{n-1} \Psi_{\rho}(nx_{1}-k_1,nx_{2}-k_2)}	dx_1 \right)dx_2.
	\end{flalign*} 
	Taking in account  Lemma \ref{lem 2.3} and (\ref{we}), we get
	\begin{flalign*}
			&I^{\phi_1,\phi_2}(\lambda K_{n}f)\\
		&\leq  \int_{-1}^{1}\phi_2\left( \int_{-1}^{1}\frac{  \displaystyle\sum_{k_2=-n}^{n-1}\sum_{k_1=-n}^{n-1} \Psi_{\rho}(nx_1-k_1)\Psi_\rho(nx_{2}-k_2) \hspace{2pt} n^2  \int_{\frac{k_2}{n}}^{\frac{k_2+1}{n}}\int_{\frac{k_1}{n}}^{\frac{k_1+1}{n}}\phi_{1}\left(\lambda|f(t_1,t_2)|\right)dt_1dt_2}{\displaystyle\sum_{k_1=-n}^{n-1} \sum_{k_2=-n}^{n-1} \Psi_{\rho}(nx_{1}-k_1) \Psi_\rho(nx_{2}-k_2)}dx_1	\right)dx_2 \\
		%		&\leq  \int_{-1}^{1}\phi_2\left( \frac{\|\Psi_{\rho}\|_1}{\left(\Psi_{\rho}(2)\right)^2}\displaystyle\sum_{k_2=-n}^{n-1}  \Psi_\rho(nx_{2}-k_2)\left( n  \int_{\frac{k_2}{n}}^{\frac{k_2+1}{n}}\int_{-1}^{1}\phi_{1}\left(\lambda|f(t_1,t_2)|\right)dt_1dt_2\right)	\right)dx_2\\
		&\leq  \int_{-1}^{1}\phi_2\left( \int_{-1}^{1} \frac{\displaystyle\sum_{k_2=-n}^{n-1} \sum_{k_1=-n}^{n-1} \Psi_{\rho}(nx_1-k_1)\Psi_\rho(nx_{2}-k_2)\hspace{2pt} n^2 \int_{\frac{k_2}{n}}^{\frac{k_2+1}{n}} \int_{\frac{k_1}{n}}^{\frac{k_1+1}{n}}\phi_{1}\left(\lambda|f(t_1,t_2)|\right)dt_1dt_2}{\left(\Psi_{\rho}(2)\right)\displaystyle\sum_{k_2=-n}^{n-1}  \Psi_\rho(nx_{2}-k_2)}dx_1	\right)dx_2.
	\end{flalign*}
	Let us assume $nx_1-k_1=u_1$ along with using Jensen's inequality, (\ref{ce}) and Lemma \ref{lem 2.3}, we obtain
	\begin{flalign*}
		&	I^{\phi_1,\phi_2}(\lambda K_{n}f)\\
		&\leq    \int_{-1}^{1}\phi_2\left( \frac{\displaystyle\sum_{k_2=-n}^{n-1}  \Psi_\rho(nx_{2}-k_2)\hspace{2pt} n \int_{\frac{k_2}{n}}^{\frac{k_2+1}{n}} \int_{-1}^{1}\phi_{1}\left(\lambda|f(t_1,t_2)|\right)dt_1dt_2 \times \int_{\RR} \Psi_{\rho}(u_1) du_1}{\left(\Psi_{\rho}(2)\right)\displaystyle\sum_{k_2=-n}^{n-1}  \Psi_\rho(nx_{2}-k_2)}	\right)dx_2\\
		&\leq  \frac{1}{\left(\Psi_{\rho}(2)\right)}  \int_{-1}^{1}\displaystyle\sum_{k_2=-n}^{n-1}\Psi_\rho(nx_{2}-k_2)\hspace{2pt} \phi_2\left( \frac{n}{\left(\Psi_{\rho}(2)\right)} \int_{\frac{k_2}{n}}^{\frac{k_2+1}{n}}\int_{-1}^{1}\phi_{1}\left(\lambda|f(t_1,t_2)|\right)dt_1dt_2\right)dx_2.
	\end{flalign*}
	Applying the Jensen's inequality on integration, (\ref{ce}) and setting $nx_2-k_2=u_2$, we get
	\begin{flalign*}
		&	I^{\phi_1,\phi_2}(\lambda K_{n}f)\\
		&\leq \frac{1}{\left(\Psi_{\rho}(2)\right)}  \int_{-1}^{1}\displaystyle\sum_{k_2=-n}^{n-1}\Psi_\rho(nx_{2}-k_2)\hspace{2pt}n\int_{\frac{k_2}{n}}^{\frac{k_2+1}{n}}\hspace{2pt} \phi_2\left( \frac{1}{\left(\Psi_{\rho}(2)\right)} \int_{-1}^{1}\phi_{1}\left(\lambda|f(t_1,t_2)|\right)dt_1\right)dt_2\hspace{2pt}dx_2\\
		&\leq \frac{1}{\left(\Psi_{\rho}(2)\right)}  \displaystyle\sum_{k_2=-n}^{n-1}\hspace{2pt}\int_{\frac{k_2}{n}}^{\frac{k_2+1}{n}}\hspace{2pt} \phi_2\left( \frac{1}{\left(\Psi_{\rho}(2)\right)} \int_{-1}^{1}\phi_{1}\left(\lambda|f(t_1,t_2)|\right)dt_1\right)dt_2 \times \int_{\RR}\Psi_\rho(u_2)du_2\\
			&\leq \frac{1}{\left(\Psi_{\rho}(2)\right)}  \hspace{2pt}\int_{-1}^{1}\hspace{2pt} \phi_2\left( \frac{1}{\left(\Psi_{\rho}(2)\right)} \int_{-1}^{1}\phi_{1}\left(\lambda|f(t_1,t_2)|\right)dt_1\right)dt_2.
%		&\leq \frac{1}{\left(\Psi_{\rho}(2)\right)}  \int_{-1}^{1}\hspace{2pt} \phi_2\left(  \int_{-1}^{1}\phi_{1}\left(\frac{\lambda|f(t_1,t_2)|}{\left(\Psi_{\rho}(2)\right)}\right)dt_1\right)dt_2.
		\end{flalign*}
		Since $\Psi_\rho(2) \geq 1$, using Definition \ref{xx}, we can write
		\begin{flalign*}
				I^{\phi_1,\phi_2}(\lambda K_{n}f)	&\leq   \int_{-1}^{1}\hspace{2pt} \phi_2\left(  \int_{-1}^{1}\phi_{1}\left(\frac{\lambda|f(t_1,t_2)|}{\left(\Psi_{\rho}(2)\right)^2}\right)dt_1\right)dt_2\\
		&=  I^{\phi_1,\phi_2}(\lambda' f), \quad \text{where } \lambda'=\frac{\lambda}{\left(\Psi_\rho(2)\right)^2}>0.
	\end{flalign*}
	Proceeding in a similar manner for $\overrightarrow{\Phi}=(\phi_1,...,\phi_r)$, we obtain
	\begin{equation*}
		\begin{split}
			I^{\overrightarrow{\Phi}}(\lambda
			K_nf)
			%						&=\int_{-1}^{1}\phi_r \Bigg(\int_{-1}^{1} \phi_{r-1}.....\Bigg(\int_{-1}^{1} \phi_1(|(K_n f)(x_1,..,x_r)|)dx_1\Bigg).....dx_{r-1}\Bigg)dx_r\\
			%						&=\int_{-1}^{1}\phi_r \left(I^{\phi_1,...,\phi_{r-1}}(
			%						K_nf)(x_1,..,x_r)\right)dx_r\\
			%						&\leq \int_{-1}^{1}\phi_r\left(\frac{1}{\left(\Psi_{\rho}(2)\right)}\|\Psi_{\rho}\|_{\underbrace{\scriptstyle{(1,..,1)}}_{r-1 \hspace{1pt}\text{times}}}\hspace{2pt}I^{\phi_1,...,\phi_{r-1}}(\lambda' f) \right)dx_r\\
			&\leq I^{\overrightarrow{\Phi}}(\lambda' f), \quad \text{where } \lambda'=\frac{\lambda}{\left(\Psi_\rho(2)\right)^r}>0.
		\end{split}
	\end{equation*}
	%			In view of Remark \ref{re1}, we can write  
	%				$$	I^{\overrightarrow{\Phi}}(
	%				K_nf)\leq \frac{I^{\overrightarrow{\Phi}}(\lambda' f)}{ \left(\Psi_{\rho}(2)\right)}.$$
	%			It is proven that the condition (\ref{4.1}) is equivalent to the property (b) of Remark \ref{rem 2.4}
	%				\begin{equation} \label{4.1}
		%					\Psi_{\rho}^{\wedge}(k) =
		%					\begin{cases}
			%						{1,} &\quad\text{} \ \  {k=0},\\
			%						{ 0,} &\quad\text{} \ \  {k \neq 0},\\
			%					\end{cases}
		%				\end{equation}
	%				with $k \in \ZZ,$ and where $\Psi_{\rho}^{\wedge}(s)=\int_{\RR}\Psi_{\rho}(u)e^{-ius}du$ denotes the Fourier transform of $\Psi_{\rho}.$ Then, from (\ref{4.1}) it turns out that $\Psi_{\rho}^{\wedge}(0)=\int_{\RR}\Psi_{\rho}(u)du=1$.
	%Using Remark \ref{re1}, the formulation can be expressed as 
	%					$$I^{\overrightarrow{\Phi}}(\lambda K_{n}f) \leq \frac{I^{\overrightarrow{\Phi}}(\lambda f)}{\left(\Psi_{\rho}(2)\right)^r}\hspace{2pt} .$$
	%%				$$\int_{\RR}\int_{\RR}\Psi_{\rho}(u_1,u_2)du_1du_2=\int_{\RR}\int_{\RR}\prod_{i=1}^{2}\Psi_{\rho}(u_i)du_1du_2=\prod_{i=1}^{2}\left(\int_{\RR}\Psi_{\rho}(u_i)du_i\right)=1.$$
	%	
	Accordingly, the desired result is obtained.		
\end{proof}
%	\begin{lemma} \label{lem 6.2}
	%		The space $C_{c}(\mathcal{M})$ is densely contained within $L^{\bar \phi}(\mathcal{M}).$
	%	\end{lemma}

%	\begin{proof}
	
	%	\end{proof}
	
	\begin{rem}\label{rem 5.2}
		The modular inequality on multivariate Kantorovich-type NN operator can be written in terms of norm with respect to mixed norm Orlicz space. Consider $f\in L^{\overrightarrow{\Phi}}(\mathcal{J})$ be arbitrary and $ \lambda=\left\|\frac{f}{\left(\Psi_\rho(2)\right)^r} \right\|_{\overrightarrow{\Phi}}$, then by the definition of mixed norm Orlicz space we have 
		$$I^{\overrightarrow{\Phi}}\Big(\frac{ f}{ \lambda \left(\Psi_\rho(2)\right)^r }\Big)\leq 1.$$ 
		The inequality \eqref{5.1} implies
		\begin{align*}
			I^{\overrightarrow{\Phi}}\Big(\frac{K_nf}{\lambda}\Big)\leq I^{\overrightarrow{\Phi}}\Big(\frac{ f}{\lambda \left(\Psi_\rho(2)\right)^r }\Big)\leq 1.
		\end{align*}
		Therefore, by the definition of mixed  norm Orlicz space, we have
		\begin{align*}
			\|K_nf\|_{\overrightarrow{\Phi}}\leq \left\| \frac{f}{\left(\Psi_\rho(2)\right)^r}\right\|_{\overrightarrow{\Phi}}. 
		\end{align*}
		By definition of norm $\|\cdot\|_{\overrightarrow{\Phi}}$, we can write
		$$\|K_nf\|_{\overrightarrow{\Phi}}\leq \frac{\left\| f\right\|_{\overrightarrow{\Phi}}}{\left(\Psi_\rho(2)\right)^r} . $$
	\end{rem}

To establish the convergence in \( L^{\overrightarrow{\Phi}}(\mathcal{J}) \) for multivariate Kantorovich-type  NN operators, we  prove the following result.
\begin{thm}\label{thm 4.2}
	Let $f \in C(\mathcal{J})$ and $ \lambda > 0 $. Then the following holds
	$$\lim_{n \to +\infty} I^{\overrightarrow{\Phi}}[\lambda(K_nf - f)] = 0.
	$$	
\end{thm}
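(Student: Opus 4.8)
The plan is to exploit the continuity of $f$ on the compact set $\mathcal{J}$ together with the modular boundedness estimate from Theorem \ref{thm 4.1}. Since $f \in C(\mathcal{J})$, it is uniformly continuous and bounded, so by \cite[Theorem 4.1]{spigler} we have $\|K_nf - f\|_{\infty} \to 0$ as $n \to \infty$. The key observation is that uniform convergence controls the modular: for any $\lambda > 0$, write $\delta_n := \|K_nf - f\|_{\infty}$, so that $|\lambda(K_nf(\mathbf{t}) - f(\mathbf{t}))| \leq \lambda \delta_n$ for all $\mathbf{t} \in \mathcal{J}$. First I would substitute this pointwise bound into the nested integral defining $I^{\overrightarrow{\Phi}}$. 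Since each $\phi_i$ is non-decreasing, we can replace $\phi_1(|\lambda(K_nf - f)(x_1,\dots,x_r)|)$ by the constant $\phi_1(\lambda \delta_n)$, then integrate in $x_1$ over $[-1,1]$ to pick up a factor of $2$, feed $2\phi_1(\lambda\delta_n)$ into $\phi_2$, and iterate.

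Carrying this out step by step gives
\begin{equation*}
	I^{\overrightarrow{\Phi}}[\lambda(K_nf - f)] \leq 2\,\phi_r\Big(2\,\phi_{r-1}\big(\cdots 2\,\phi_1(\lambda \delta_n)\cdots\big)\Big).
\end{equation*}
Now, since each $\phi_i$ is an Orlicz function with $\phi_i(0) = 0$ and $\phi_i$ continuous from the left (in fact continuous on $[0,\infty)$ where finite), and since $\delta_n \to 0$, the innermost term $\phi_1(\lambda\delta_n) \to \phi_1(0) = 0$; then $2\phi_1(\lambda\delta_n)$ also tends to $0$, hence $\phi_2(2\phi_1(\lambda\delta_n)) \to \phi_2(0) = 0$, and so on up the chain. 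Passing to the limit through the finitely many continuous compositions yields $I^{\overrightarrow{\Phi}}[\lambda(K_nf-f)] \to 0$. One should also remark that $K_nf$ is well-defined and lies in $L^{\overrightarrow{\Phi}}(\mathcal{J})$ — this follows from the boundedness of $f$ together with Remark \ref{NN} and Lemma \ref{lem 2.3}, so the modular is finite for every $n$.

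I would present the bivariate case $r = 2$ explicitly for clarity, exactly as done in the proofs of Theorems \ref{thm 3.1} and \ref{thm 4.1}, and then indicate that the general case follows by the same iteration. The main (and really only) subtlety is ensuring that we may take the limit inside the nested $\phi_i$'s: this requires that each $\phi_i$ is finite and continuous at $0$ from the right, which holds because an Orlicz function satisfies $\phi_i(0) = 0$, is convex, non-decreasing, and left-continuous — convexity plus finiteness near $0$ forces right-continuity at every interior point of its domain of finiteness, and $0$ is such a point since $\phi_i(u) < \infty$ for small $u$ (otherwise $L^{\phi_i}$ would be trivial). Apart from this observation, the argument is a routine chain of monotonicity estimates, so I do not anticipate any serious obstacle; the work lies entirely in writing the nested inequality carefully.
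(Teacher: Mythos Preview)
Your proposal is correct and matches the paper's approach almost exactly: the paper also invokes \cite[Theorem 4.1]{spigler} for uniform convergence, bounds the modular in the bivariate case by feeding $\|K_nf-f\|_\infty$ through the nested $\phi_i$'s using monotonicity, and concludes by letting the argument tend to zero via $\phi_i(0)=0$. Your nested estimate $2\phi_2(2\phi_1(\lambda\delta_n))$ is in fact slightly more carefully written than the paper's displayed bound $4\,\phi_2(\phi_1(\lambda\|K_nf-f\|_\infty))$, but the underlying argument is identical.
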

\begin{proof}
	%		We begin by examining the result for $n=2$.
	Following a similar approach as in \cite[Theorem 4.1]{spigler}, for any fixed $\epsilon>0$, we have
	\begin{flalign*}
		I^{\overrightarrow{\Phi}}\left[\lambda \left(K_nf - f\right)\right] &=\int_{-1}^{1}\phi_2\left(\int_{-1}^{1}\phi_1 \left(\lambda\hspace{2pt} |(K_nf)(x_1,x_2)-f(x_1,x_2)|\right)dx_1\right)dx_2	\\
		&\leq 4\hspace{2pt}\phi_2\left(\phi_1 \left(\lambda \|K_nf-f\|_{\infty}\right)\right)\\
		& \leq 4\hspace{2pt}\phi_2\left(\phi_1\left(\lambda \epsilon\right)\right),
	\end{flalign*}
	for sufficiently large $n \in \NN$. In view of definition of Orlicz function $\phi$ and arbitrariness of $\epsilon$, we get the desired result.
\end{proof}

It is well known that within the structure of mixed norm, the space of continuous function \( C(\mathcal{J})\) is dense in \( L^{\overrightarrow{\mathscr{P}}}(\mathcal{J}) \), for \( \overrightarrow{\mathscr{P}} \in [1, \infty)^r \).
According to the existing literature, this result is not explicitly studied in the framework of mixed norm Orlicz spaces. To provide a thorough exposition we address the density result for \( L^{\overrightarrow{\Phi}}(\mathcal{J}) \), in the following lemma. 
% To ensure a detailed and self-contained discussion, the following lemma provides the necessary details.

%Before examining the convergence of Kantorovich-type neural network operators, we first need to establish the following lemma.
\begin{lemma}\label{lem 4.2}
	The space $C(\mathcal{J})$
	is  dense in $L^{\overrightarrow{\Phi}}(\mathcal{J})$.
\end{lemma}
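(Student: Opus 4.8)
The plan is to establish the density of $C(\mathcal{J})$ in $L^{\overrightarrow{\Phi}}(\mathcal{J})$ by reducing to the classical (one-dimensional) Orlicz-space density result and then proceeding iteratively over the coordinate directions. First I would recall that it suffices to approximate, in the modular sense, functions from a convenient dense subclass: since simple functions (finite linear combinations of characteristic functions of measurable rectangles, or more generally of measurable subsets of $\mathcal{J}$) are modularly dense in $L^{\overrightarrow{\Phi}}(\mathcal{J})$ — this follows from the monotone/dominated convergence properties of the iterated modular $I^{\overrightarrow{\Phi}}$ exactly as in the scalar Orlicz case — one reduces to showing that each characteristic function $\chi_E$, $E\subset\mathcal{J}$ measurable, can be modularly approximated by continuous functions. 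For that, Lusin's theorem (or the regularity of Lebesgue measure) gives a compact $K\subset E$ and an open $U\supset E$ with $|U\setminus K|$ arbitrarily small, and Urysohn's lemma produces $g\in C(\mathcal{J})$ with $0\le g\le 1$, $g\equiv1$ on $K$, $\operatorname{supp}g\subset U$; then $|\chi_E-g|\le\chi_{U\setminus K}$ pointwise.

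The key step then is to control $I^{\overrightarrow{\Phi}}[\lambda(\chi_E-g)]$ by $I^{\overrightarrow{\Phi}}[\lambda\,\chi_{U\setminus K}]$ and show the latter tends to $0$ as $|U\setminus K|\to0$. Because each $\phi_i$ is non-decreasing and $\phi_i(0)=0$, monotonicity of the iterated integrals gives $I^{\overrightarrow{\Phi}}[\lambda(\chi_E-g)]\le I^{\overrightarrow{\Phi}}[\lambda\,\chi_{U\setminus K}]$. To see that $I^{\overrightarrow{\Phi}}[\lambda\,\chi_F]\to0$ as $|F|\to0$, I would peel off the innermost integral: writing $F=U\setminus K$ and $F_{x_2,\dots,x_r}$ for its slices, the innermost integral $\int_{-1}^{1}\phi_1(\lambda\chi_F)\,dx_1=\phi_1(\lambda)\,|F_{x_2,\dots,x_r}|$, which is bounded by $2\phi_1(\lambda)$ and tends to $0$ in measure (in the remaining variables) as $|F|\to0$ by Fubini; applying $\phi_2$ (continuous, hence bounded on $[0,2\phi_1(\lambda)]$) and dominated convergence, the next integral goes to $0$, and iterating through $\phi_3,\dots,\phi_r$ yields $I^{\overrightarrow{\Phi}}[\lambda\,\chi_F]\to0$. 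This works for \emph{every} $\lambda>0$, so one actually obtains norm density, not merely modular density — though modular density already suffices for the convergence theorem in Section~\ref{4}.

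The main obstacle, and the place to be careful, is the nested structure of $I^{\overrightarrow{\Phi}}$: unlike the scalar modular, one cannot simply invoke subadditivity in each variable independently, and the "smallness" of $|F|$ does not immediately give smallness of the inner integrals \emph{uniformly} in the outer variables — only smallness in measure. Hence the argument genuinely needs the layer-by-layer application of dominated convergence, using at each layer that $\phi_{i}$ is continuous with $\phi_i(0)=0$ and that the accumulated inner quantity stays uniformly bounded (so that $\phi_i$ of it is uniformly bounded and DCT applies). A secondary technical point is justifying that simple functions are modularly dense in $L^{\overrightarrow{\Phi}}(\mathcal{J})$ to begin with: given $f\in L^{\overrightarrow{\Phi}}(\mathcal{J})$ with $I^{\overrightarrow{\Phi}}[\lambda_0 f]<\infty$ for some $\lambda_0>0$, take the standard truncated simple approximants $s_k\to f$ pointwise with $|s_k|\le|f|$; then $|\lambda_0(f-s_k)|\le 2\lambda_0|f|$ and the same nested dominated-convergence argument (now dominating by $I^{\overrightarrow{\Phi}}[2\lambda_0 f]<\infty$, which is finite after possibly shrinking $\lambda_0$) gives $I^{\overrightarrow{\Phi}}[\lambda_0(f-s_k)]\to0$. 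Combining the two reductions completes the proof.
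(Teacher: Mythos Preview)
Your proposal is correct and follows essentially the same route as the paper: regularity of Lebesgue measure plus Urysohn's lemma to approximate $\chi_E$ by continuous functions, with $I^{\overrightarrow{\Phi}}[\lambda\chi_F]\to 0$ as $|F|\to 0$ handled by dominated convergence, and then simple functions shown to be modularly dense via pointwise approximants $|s_k|\le|f|$ together with another application of dominated convergence. The only difference is presentational: the paper invokes a mixed-norm dominated convergence theorem from \cite{dct} in one stroke, whereas you spell out the layer-by-layer peeling through $\phi_1,\dots,\phi_r$, which is exactly what that cited result amounts to.
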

%	\begin{proof}
	%		Let  $f \in L^{\overrightarrow{\Phi}}(\mathcal{J})$ and $ \epsilon>0$ is arbitarty. Using \cite{dct}, there exists a simple function $g :L^{\overrightarrow{\Phi}}(\mathcal{J})\rightarrow \RR$ such that
	%		$$\|f-g\|_{\overrightarrow{\Phi}}<\frac{\epsilon}{2}$$
	
	%	\end{proof}

\begin{proof}
	%		First we show that $C(\mathcal{J})$ is dense in $\mathcal{S}$,\hspace{2pt} where\\
	%		$\mathcal{S}=\left\lbrace s:\mathcal{J} \rightarrow \CC\right.: $ s\hspace{1pt} is simple and measurable function such that $\left.\mu(\textbf{x} \in \mathcal{J}:s(\textbf{x}) \neq 0) <\epsilon \right\rbrace.$ According to Lusin's theorem (see \cite{folland}, Section 7), 
	%		for each $s \in S$ and $\epsilon >0$, there exists $ g\in C_{c}(\mathcal{J})$ such that   $|g| \leq \|s\|_{\infty}$ and $\mu({\textbf{x}\in \mathcal{J}:g(\textbf{x}) \neq s(\textbf{x})})<\epsilon.$ 
	%		Thus for any given $ \epsilon>0$, we choose $k \in \ZZ$ such that $\displaystyle k>\frac{1}{\epsilon}$. We first establish this result in the case of $n=2$. Consequently,		 
	%		%			  Thus$$ \int_{\RR} \int_{-1}^{1}\phi_2 (\phi_1(k|g(x_1,x_2)-s(x_1,x_2)|))dx_1 dx_2 < 1. $$
	%		\begin{equation*}
		%			\begin{split} 
			%				\|g-s\|_{\Phi}&=  \inf \left\lbrace k>0 : \int_{-1}^{1} \phi_2\left(\int_{-1}^{1}\phi_1\left(  k|(g-s)(x_1,x_2)|\right)dx_1\right)dx_2 \leq 1\right\rbrace\\			
			%				%			\inf \left\lbrace k>0 : \int_{-1}^{1} \phi_1\left(  k|N_{\phi_2}(g-s)(x_1,\cdot)|\right)dx_1 \leq 1\right\rbrace \\
			%				%	$$\hspace{1.9cm} =\inf \left\lbrace k>0 :\int_{-1}^{1}\phi_2\left( \int_{-1}^{1} \phi_1 (k|g(x_1,x_2)-s(x_1,x_2)|)dx_1\right)dx_2\leq 1\right\rbrace  $$
			%				& \leq \frac{1}{k} < \epsilon,
			%			\end{split}
		%		\end{equation*}
	%		for sufficiently large $k$.
	Here we prove the result for $r=2.$ The result is demonstrated by proceeding through the following steps:
	
\textbf{Step 1.}	We establish that there exists a sequence of continuous functions that converges to any simple function.
For a measurable set $M \subset \mathcal{J}$ and any $m \in \NN,$ one can  construct a sequence of open sets $(D_m)$ and compact sets $(C_m)$ respectively such that $C_m \subset M \subset D_m$ and $\mu(D_m \setminus C_m)<1/m.$ By applying Urysohn's lemma, we construct a sequence of continuous function $f_m: [-1,1]^m \rightarrow [-1,1]\subset \RR$ such that
	$$f_m(\textbf{x}) = 
	\begin{cases} 
		1, & \text{when } \textbf{x} \in C_m, \\
		0, & \text{when } \textbf{x} \in [-1, 1]^r \setminus D_m.
	\end{cases}$$
	One can directly confirm that $\chi_{C_m}(\textbf{x})\leq  f_m(\textbf{x}) \leq  \chi_{D_m}(\textbf{x})$ for all $\textbf{x} \in \mathcal{J}.$ This shows $0 \leq |f_m(\textbf{x})-\chi_{C_m}(\textbf{x})|\leq \chi_{D_m \setminus C_m}(\textbf{x})$ for all $\textbf{x} \in \mathcal{J}.$ For every $\bar \lambda>0$, one can write
	%					$$\|\chi_A - f_m \|_{\overrightarrow{\Phi}}
	%				\leq\|\chi_{D_m} - \chi_{C_m}\|_{\overrightarrow{\Phi}} 
	%				\leq \| \chi_{D_m \setminus C_m}\|_{\overrightarrow{\Phi}}.$$
	$$I^{\overrightarrow{\Phi}} (\bar\lambda(\chi_M - f_m ))  
	\leq I^{\overrightarrow{\Phi}}(\bar\lambda(\chi_{D_m} - \chi_{C_m} ))  
	\leq I^{\overrightarrow{\Phi}}(\bar\lambda( \chi_{D_m \setminus C_m} )).$$
	Since $|\chi_{D_m\setminus C_m}(\textbf{x})| \leq 1$ for every $\textbf{x} \in \mathcal{J}$ and it approaches to $zero$ almost everywhere, by using the dominated convergence theorem for mixed norm (\cite[Section 2]{dct}), we conclude that $ I^{\overrightarrow{\Phi}}(\bar\lambda( \chi_{D_m \setminus C_m} )) $  approaches to $zero$   as $m \rightarrow \infty. $ This shows that $I^{\overrightarrow{\Phi}} (\bar\lambda(\chi_M - f_m )) $  approaches to $zero$  as $m \to \infty.$ 
	%				 and consequently any simple function can be approximated in the mixed Orlicz norm by a sequence of continuous function. 
	%				In other word for a simple fucntion $g:[-1,1]^n \rightarrow \RR$ and $\epsilon>0$, there exist a contnuos fucnton $h:[-1,1]^n \rightarrow \RR$ such that
	%				$$\|g-h\|_{\overrightarrow{\Phi}}<\frac{\epsilon}{2}$$
	
	Consequently, given $\epsilon>0$ and a simple function $f : \mathcal{J} \rightarrow \RR$, one can find a continuous function 
 $h: \mathcal{J} \rightarrow\RR$  such that
	\begin{equation}\label{p1}
		\|f-h\|_{\overrightarrow{\Phi}}<\frac{\epsilon}{2}.
	\end{equation}
%	 $$I^{\overrightarrow{\Phi}} (\bar \lambda(g-h))<\frac{\epsilon}{2}.$$

\textbf{Step 2.}	We  demonstrate that every function in $L^{\overrightarrow{\Phi}}(\mathcal{J})$ can be approximated arbitrarily well by simple functions.
	%	 	  where	the space $M^{\Phi}(\RR^2)$ defined as follows\hspace{2pt} 
	%	 	 $$ M^{\Phi}(\RR^2)= \left\lbrace  f \in  L^{\Phi}(\RR^2) : \alpha f \in \tilde{L}^{\Phi}(\RR^2), \hspace{5pt} \text{for all }\alpha>0 \right\rbrace ,$$
	%	 	 	with 
	%	 	 	$\displaystyle \tilde{L}^{\Phi}(\RR^2)=\left\lbrace f :\RR^2 \rightarrow \RR \hspace{3pt}\text{ measurable} : \int_{-1}^{1} \phi_2\left(\int_{-1}^{1}\phi_1\left({|f(\cdot,x_2)|}\right)dx_1\right)dx_2  < \infty\right\rbrace ,$ 
	For $g\in L^{\overrightarrow{\Phi}}(\mathcal{J}),$ there exists an increasing sequence of simple functions $(g_n)_{n \in \NN}$   such that $0\leq g_n \uparrow |g|$ pointwise  as $n \rightarrow \infty$.  Furthermore, for any $\eta>0$, it holds that $\phi_1(\eta(|g|-g_n))\leq \phi_1(\eta g)$ and $\phi_1(\eta(|g|-g_n))$ approaches to zero as $n \rightarrow \infty.$ Hence, by the \emph{dominated convergence theorem} for mixed norm, we have
	$$ \lim_{n\to \infty} \int_{-1}^{1}\phi_2\left(\int_{-1}^{1}\phi_1\left( \eta|(|g|-g_n)(x_1,x_2)| \right)dx_1\right)dx_2 = 0.$$ Therefore, for any positive function $|g|\in L^{\overrightarrow{\Phi}}(\mathcal{J})$ there exists a non-negative simple function arbitrary close to $|g|$.
%	then there exists a sequence $(f_n)_{n \in \NN}$ of non-negative  and increasing simple functions such that $(f_n)_{n \in \NN}$ converges to $|f|$ pointwise, so that for some $\bar\lambda>0$, $ (\bar\lambda f_n)_{n \in \NN} \in  L^{\overrightarrow{\Phi}}(\mathcal{J})$. By using the \emph{dominated convergence theorem} for mixed norm (\cite{dct}, Section 2), it follows that
%	\\
%	approaches to $zero$ as $ n \rightarrow \infty, $ since	$$ \phi_1(\bar\lambda f)\geq \phi_1(\bar\lambda(|f|-f_n)).$$
	As $g\in L^{\overrightarrow{\Phi}}(\mathcal{J})$, then we can write \( g = g^+ - g^- \), where \( g^+, g^- \in L^{\overrightarrow{\Phi}}(\mathcal{J}) \) are non-negative functions defined as follows:
	\begin{equation*}
		g^+(x)=\max\{g(x), 0\} \qquad \text{and} \qquad g^-(x)=-\min\{g(x), 0\}.
	\end{equation*}
	Hence, for any \( \epsilon > 0 \) there exist non-negative simple functions $s^+$ and $s^-$ such that
	\[
	\|g^+ - s^+\|_{\overrightarrow{\Phi}} < \frac{\epsilon}{4}, \quad \|g^- - s^-\|_{\overrightarrow{\Phi}} < \frac{\epsilon}{4}.
	\]
Therefore, for any  $\epsilon>0$ and $g\in L^{\overrightarrow{\Phi}}(\mathcal{J}) $, one can find a simple function \( s := s^+ - s^- \) such that
	\begin{equation}\label{p2}
		\|g - s\|_{\overrightarrow{\Phi}} \leq \|g^+ - s^+\|_{\overrightarrow{\Phi}} + \|g^- - s^-\|_{\overrightarrow{\Phi}} < \frac{\epsilon}{2}.
	\end{equation}
	% In other words, for any $f\in L^{\overrightarrow{\Phi}}(\RR^2) $ and $\epsilon>0$, there exist a simple function $g:\RR^2 \rightarrow \RR$ and suitable $\gamma>0$ such that 
	% $$I^{\overrightarrow{\Phi}}(\gamma(f-g)) < \frac{\epsilon}{2}.$$
	In view of the triangle inequality of mixed norm Orlicz space, and the equations \eqref{p1} and \eqref{p2}, we get $\|f-g\|_{\overrightarrow{\Phi}}  <\epsilon.$
%	In other words, for any $f\in L^{\overrightarrow{\Phi}}(\mathcal{J}) $ and $\epsilon>0$, there exists a simple function $g: \mathcal{J} \rightarrow \RR$ and $\bar\lambda>0$ such that 
%	$$I^{\overrightarrow{\Phi}}(\bar\lambda(f-g)) < \frac{\epsilon}{2}.$$
	%		Hence for any  $\epsilon>0$, choose  $\displaystyle k>\frac{1}{\epsilon}$ and $n_o$ such that $n\geq n_o$, to obtain \\
	%		\begin{equation*}
		%			\begin{split} 
			%				\||f|-f_n\|_{\overrightarrow{\Phi}}&=  \inf \left\lbrace k>0 : \int_{-1}^{1} \phi_2\left(\int_{-1}^{1}\phi_1\left(  k|(|f|-f_n)(x_1,x_2)|\right)dx_1\right)dx_2 \leq 1\right\rbrace \\
			%				& \leq \frac{1}{k} < \epsilon.
			%			\end{split}
		%		\end{equation*}
	%Similarly, the result holds for any n
This completes the proof. Following a similar approach one can extend this for any $r \in \NN.$
\end{proof}
%%%%%%%%%%%%%%%%%%%%%%%%%%%%%%%%%%
%	\begin{proof}
	%		The proof of this lemma follows the same approach discussed in Theorem 2.7 (see \cite{pandey}).
	%		\end{proof}

We now proceed to prove the convergence of (\ref{multi}) in  $L^{\overrightarrow{\Phi}}(\mathcal{J})$.
\begin{thm} \label{interpol}
	Let $f\in L^{\overrightarrow{\Phi}}(\mathcal{J})$ be fixed. Then we have
	$$ \lim_{n\to\infty}	\| K_nf-f\|_{\overrightarrow{\Phi}}=0. $$
\end{thm}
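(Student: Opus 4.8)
The plan is to combine the boundedness estimate from Theorem~\ref{thm 4.1} (equivalently the norm estimate in Remark~\ref{rem 5.2}), the continuous-function convergence from Theorem~\ref{thm 4.2}, and the density result Lemma~\ref{lem 4.2} via a standard $3\varepsilon$-type argument, taking care to track modular parameters since norm and modular convergence need not coincide unless $\overrightarrow{\Phi}$ satisfies a $\Delta_2$-condition. The cleanest route is to prove modular convergence directly: fix $f\in L^{\overrightarrow{\Phi}}(\mathcal{J})$ and show that for some $\lambda>0$ one has $I^{\overrightarrow{\Phi}}[\lambda(K_nf-f)]\to 0$; if one wants norm convergence one works with the norm estimate of Remark~\ref{rem 5.2} instead, which behaves like a genuine (sub)norm and so the triangle inequality is available without splitting $\lambda$.

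First I would fix $\varepsilon>0$ and, using Lemma~\ref{lem 4.2}, choose $g\in C(\mathcal{J})$ with $\|f-g\|_{\overrightarrow{\Phi}}$ small — specifically small enough that, after multiplying by the operator-norm constant $1/(\Psi_\rho(2))^r$ coming from Remark~\ref{rem 5.2}, both $\|f-g\|_{\overrightarrow{\Phi}}$ and $\|K_nf-K_ng\|_{\overrightarrow{\Phi}}\le (\Psi_\rho(2))^{-r}\|f-g\|_{\overrightarrow{\Phi}}$ are below $\varepsilon/3$. Then I would split
\[
\|K_nf-f\|_{\overrightarrow{\Phi}}\le \|K_nf-K_ng\|_{\overrightarrow{\Phi}}+\|K_ng-g\|_{\overrightarrow{\Phi}}+\|g-f\|_{\overrightarrow{\Phi}}.
\]
The first and third terms are controlled by the choice of $g$ and the linearity and boundedness of $K_n$ (note $K_n$ is linear in $f$, so $K_nf-K_ng=K_n(f-g)$). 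For the middle term, since $g\in C(\mathcal{J})$, Theorem~\ref{thm 4.2} gives modular convergence $I^{\overrightarrow{\Phi}}[\mu(K_ng-g)]\to 0$ for every $\mu>0$; converting this to the statement $\|K_ng-g\|_{\overrightarrow{\Phi}}\to 0$ is immediate from the definition of the Luxemburg-type norm (given $\delta>0$, pick $\mu=1/\delta$, then for large $n$ the modular is $\le 1$, hence the norm is $\le\delta$). So there is $N_0$ with $\|K_ng-g\|_{\overrightarrow{\Phi}}<\varepsilon/3$ for $n\ge N_0$, and the three pieces sum to less than $\varepsilon$.

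The main obstacle — and the only real subtlety — is bookkeeping with the modular parameter $\lambda$: because we only have \emph{modular} convergence in Theorem~\ref{thm 4.2} and a \emph{modular} boundedness inequality in Theorem~\ref{thm 4.1}, a naive triangle inequality at the level of the modular functional $I^{\overrightarrow{\Phi}}$ fails (modulars are only convex, not norms, and composition with the outer Orlicz functions $\phi_2,\dots,\phi_r$ does not distribute over sums). The clean way around this is precisely to pass to the norm $\|\cdot\|_{\overrightarrow{\Phi}}$ via Remark~\ref{rem 5.2}, where the triangle inequality is legitimate, and only invoke the modular statements of Theorems~\ref{thm 4.1} and~\ref{thm 4.2} through their norm reformulations. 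One should also check that $K_nf\in L^{\overrightarrow{\Phi}}(\mathcal{J})$ so the norm is finite — but this is exactly the content of the boundedness estimate $\|K_nf\|_{\overrightarrow{\Phi}}\le(\Psi_\rho(2))^{-r}\|f\|_{\overrightarrow{\Phi}}$. With these pieces in place the argument is routine; I would present it for general $r$ directly since nothing dimension-specific enters once the three ingredients are phrased in norm form.
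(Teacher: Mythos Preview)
Your proposal is correct and follows essentially the same route as the paper: a three-term split $\|K_nf-f\|_{\overrightarrow{\Phi}}\le\|K_nf-K_ng\|_{\overrightarrow{\Phi}}+\|K_ng-g\|_{\overrightarrow{\Phi}}+\|g-f\|_{\overrightarrow{\Phi}}$, with density (Lemma~\ref{lem 4.2}) supplying $g$, the norm bound of Remark~\ref{rem 5.2} controlling the first term, and Theorem~\ref{thm 4.2} controlling the middle term. Your added remark on passing from modular convergence for all $\mu>0$ to norm convergence via the Luxemburg definition actually fills in a step the paper leaves implicit.
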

\begin{proof}
	%		The result is demonstrated through the standard density argument. Consider
	For $f\in L^{\overrightarrow{\Phi}}(\mathcal{J})$ and $\epsilon >0$. Then by Lemma \ref{lem 4.2}, there exists $g \in C(\mathcal{J})$  such that
	\begin{equation}\label{Ae}
		\|f-g\|_{\overrightarrow{\Phi}} <\frac{\epsilon}{2 \left( 1+\frac{1}{\left(\Psi_\rho(2)\right)^r}\right)}.
	\end{equation}
%	$$I^{\overrightarrow{\Phi}}\left(\bar\lambda(f-g)\right)<\frac{\epsilon}{2\left( \frac{1}{ \left(\Psi_{\rho}(2)\right)}\hspace{2pt}+1\right)}.$$
%	Consider $\lambda>0$ be fixed in such a way that $3\lambda \leq \bar \lambda$.
%	%		$\|f-g\|_{\overrightarrow{\Phi}}< \epsilon/3.$
%	Utilizing the convex property of $I^{\overrightarrow{\Phi}}$ along with Theorem \ref{thm 4.1} and \ref{thm 4.2}, it follows that
%	\begin{equation*}
%		\begin{split}
%			%				\|f-K_{n}f\|_{\Phi} &\leq \|f-g\|_{\Phi}+\|g-K_{n}(g)\|_{\Phi}+\|K_{n}(g)-K_{n}f\|_{\Phi}\\
%			%				&\leq 2\|f-g\|_{\Phi}+\|g-K_{n}g\|_{\Phi} \\
%			%				& <{\frac{2\epsilon}{3}} + \|g-K_{n}g\|_{\Phi}\\
%			%				& <{\frac{2\epsilon}{3}} + {\frac{\epsilon}{3}}=\epsilon,
%			I^{\overrightarrow{\Phi}}\left(\lambda(K_nf-f)\right)& \leq I^{ \overrightarrow{\Phi}}\left(3\lambda(K_nf-K_ng)\right) + I^{ \overrightarrow{\Phi}}\left(3\lambda(K_ng-g)\right)+ I^{\overrightarrow{\Phi}}\left(3\lambda(g-f)\right) \\
%			& \leq \left( \frac{1}{\left(\Psi_{\rho}(2)\right)}+1\right)\hspace{2pt}I^{\overrightarrow{\Phi}}(3\lambda (f-g))+I^{\overrightarrow{\Phi}}\left(3\lambda(K_ng-g)\right)\\
%			&<\frac{\epsilon}{2}+I^{\overrightarrow{\Phi}}\left(3\lambda(K_ng-g)\right)\\
%			&< \frac{\epsilon}{2}+\frac{\epsilon}{2}=\epsilon
%			%&\leq \left( \frac{\|\Psi_{\rho}\|_{(1,1)}}{\left(\Psi_{\rho}(2)\right)^2}+1\right)\epsilon + \epsilon
%		\end{split}
%	\end{equation*}
%	$\forall \hspace{1pt} n \geq N_{0}$, for some $N_{0} \in \NN.$
	Theorem \ref{thm 4.2} implies there exists $N_0\in \NN$ such that for each $n\geq N_0$, we have $$\|K_ng-g\|_{\overrightarrow{\Phi}}<\frac{\epsilon}{2}.$$
	Now for $n\geq N_0$, the triangle inequality of $ \|\cdot\|_{\overrightarrow{\Phi}}$ along with \eqref{Ae} and Remark \ref{rem 5.2}, we obtain
	\begin{align*}
		\| K_nf-f\|_{\overrightarrow{\Phi}} &\leq \|g-f\|_{  \overrightarrow{\Phi}}+ \|K_nf-K_ng\|_{\overrightarrow{\Phi}}+ \|K_ng-g\|_{\overrightarrow{\Phi}}\\
		&\leq \|g-f\|_{  \overrightarrow{\Phi}}+ \|K_nf-K_ng\|_{\overrightarrow{\Phi}}+ \frac{\epsilon}{2}\\
			&\leq \left( 1+\frac{1}{\left(\Psi_\rho(2)\right)^r}\right) \|g-f\|_{  \overrightarrow{\Phi}}+  \frac{\epsilon}{2}\\
%			&\leq \left( 1+\lambda'\right)\|g-f\|_{  \overrightarrow{\Phi}}+  \frac{\epsilon}{2}\\
%		&<  \left( 1+\left(m_0(\Chi)\right)^n\right) \|g-f\|_{  \overrightarrow{\Phi}} + \frac{\epsilon}{2}\\
		&<\epsilon.
	\end{align*}
	This completes the proof.
\end{proof}

\begin{rem}
	As discussed in Section \ref{2}, it is clear that $L^{\phi}$ 
	generates various function spaces depending on the choice of
	$\phi-$functions. For an example,
	$\phi_{\alpha, \beta}(x) = x^{\alpha} \log^{\beta}(e + x)$ for $\alpha \geq 1$, $\beta > 0$ and $x \geq 0$. 
	%	This function generates the space $L^{\alpha} \log^{\beta} L([a,b])$.
	The corresponding Orlicz space is an interpolation space.
	%		 defined by the set of functions \( f \in {M}(\mathcal{J}) \) for which
	It is denoted by \( L^{\alpha} \log^{\beta} L\).
	The Logarithmic spaces \( L^{\alpha} \log^{\beta} L\) are associated with the Hardy-Littlewood maximal functions (see \cite{log2}). 
	
	Another example is $\phi_{\alpha}(x) = e^{x^{\alpha}} - 1$
	for \( \alpha > 0 \) and \( x \geq 0 \) which generates an Orlicz space known as an exponential space. In Sobolev space embeddings, exponential spaces are of fundamental importance (\cite{exp}).
	
	Here, we consider $(\phi_1,\phi_2)=(\phi_{\alpha, \beta},\phi_{\alpha})$ and define the modular space as the set of functions 
	\( f \in {M}(\mathcal{J}) \) for which
	\[
	I^{(\phi_1,\phi_2)}[\lambda f] =\int_{-1}^{1} \left\lbrace  \exp \left( \int_{-1}^{1} (\lambda |f(\textbf{x})|)^{\alpha} \log^{\beta}(e + \lambda |f(\textbf{x})|) \, dx_1\right)^{\alpha}-1 \right\rbrace  dx_2 < +\infty,
	\]
	for \( \lambda > 0 \). From Theorem \ref{interpol}, we can conclude the following corollaries, particularly $\alpha=\beta=1.$
\end{rem}
\begin{cor}\label{c1}
	Let $f \in L^{(\phi_{\alpha,\beta},{\phi_\alpha})}(\mathcal{J})$ and $\lambda>0$. Then we have 
	%	\[
	%		\lim_{n_1,n_2 \to +\infty} \int_{a}^{b} \left| (D_{n_1,n_2})f(x) - f(x) \right| \log(e + \lambda \left| (D_{n_1,n_2}) f(x) - f(x)\right|) \, dx = 0,
	%		\]
	%		or, similarly,
	\[
	\lim_{n \to +\infty}\int_{-1}^{1} \left\lbrace  \exp \left( \int_{-1}^{1} (\lambda |(K_{n}f)(\textbf{x})-f(\textbf{x})|) \log(e + \lambda |(K_{n}f)(\textbf{x})-f(\textbf{x})|) \, dx_1\right)-1 \right\rbrace  dx_2 = 0.
	\]
\end{cor}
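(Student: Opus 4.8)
The plan is to obtain Corollary \ref{c1} as a direct specialization of Theorem \ref{interpol}, followed by the routine passage from Luxemburg-norm convergence to modular convergence.

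First I would fix $\alpha=\beta=1$ and put $\phi_1(t)=t\log(e+t)$, which is $\phi_{\alpha,\beta}$ with $\alpha=\beta=1$, and $\phi_2(s)=e^{s}-1$, which is $\phi_\alpha$ with $\alpha=1$, for $t,s\geq 0$. A short check shows that each of $\phi_1,\phi_2$ is a genuine Orlicz function: both are continuous (hence left-continuous), non-decreasing, vanish at the origin, and diverge at infinity, and both are convex — for $\phi_2$ this is immediate, and for $\phi_1$ it follows from $\phi_1''(t)=\frac{1}{e+t}+\frac{e}{(e+t)^2}>0$. Hence $\overrightarrow{\Phi}:=(\phi_1,\phi_2)$ is a legitimate index vector, so the mixed norm Orlicz space $L^{\overrightarrow{\Phi}}(\mathcal{J})=L^{(\phi_{\alpha,\beta},\phi_\alpha)}(\mathcal{J})$ is well defined and Section \ref{4} applies to it verbatim. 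By construction, the integral displayed in Corollary \ref{c1} is precisely $I^{\overrightarrow{\Phi}}\big[\lambda(K_nf-f)\big]$ for this choice of $(\phi_1,\phi_2)$, so it suffices to prove that $I^{\overrightarrow{\Phi}}\big[\lambda(K_nf-f)\big]\to 0$ for every $\lambda>0$.

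Next I would apply Theorem \ref{interpol} to the given $f\in L^{\overrightarrow{\Phi}}(\mathcal{J})$, obtaining $\|K_nf-f\|_{\overrightarrow{\Phi}}\to 0$. To transfer this to the modular, I would record that $I^{\overrightarrow{\Phi}}$ is convex: since $\phi_1,\phi_2$ are convex, non-decreasing and vanish at $0$, Definition \ref{xx} gives $\phi_i(\theta u)\leq\theta\,\phi_i(u)$ for $\theta\in[0,1]$, and iterating this through the two nested integrals yields $I^{\overrightarrow{\Phi}}[\theta g]\leq\theta\,I^{\overrightarrow{\Phi}}[g]$ for all $g\in M(\mathcal{J})$ and $\theta\in[0,1]$. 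Combined with the standard fact that $\|h\|_{\overrightarrow{\Phi}}\leq 1$ forces $I^{\overrightarrow{\Phi}}[h]\leq 1$, this gives $I^{\overrightarrow{\Phi}}[h]\leq\|h\|_{\overrightarrow{\Phi}}$ whenever $\|h\|_{\overrightarrow{\Phi}}\leq 1$. Now fix $\lambda>0$. Because $\|K_nf-f\|_{\overrightarrow{\Phi}}\to 0$, there is $N\in\NN$ with $\|\lambda(K_nf-f)\|_{\overrightarrow{\Phi}}\leq 1$ for all $n\geq N$, and hence
\[
I^{\overrightarrow{\Phi}}\big[\lambda(K_nf-f)\big]\leq\big\|\lambda(K_nf-f)\big\|_{\overrightarrow{\Phi}}=\lambda\,\|K_nf-f\|_{\overrightarrow{\Phi}}\xrightarrow[n\to\infty]{}0.
\]
Spelling out the left-hand side with $\phi_1(t)=t\log(e+t)$ and $\phi_2(s)=e^{s}-1$ is exactly the limit asserted in Corollary \ref{c1}.

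Since this is a corollary, I do not expect a genuine obstacle; the only points that need a little attention are verifying that $t\mapsto t\log(e+t)$ is convex (so that $(\phi_{\alpha,\beta},\phi_\alpha)$ with $\alpha=\beta=1$ really is an admissible index vector and Theorem \ref{interpol} may be invoked) and the one-directional implication norm convergence $\Rightarrow$ modular convergence. The latter is worth writing out precisely here because $\phi_2(s)=e^{s}-1$ does not satisfy the $\Delta_2$-condition, so modular and norm convergence do not coincide in $L^{\overrightarrow{\Phi}}(\mathcal{J})$; it is therefore essential that Theorem \ref{interpol} supplies the stronger norm convergence, from which the modular statement then follows.
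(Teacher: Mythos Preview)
Your proposal is correct and follows exactly the route the paper intends: the corollary is stated immediately after the remark ``From Theorem \ref{interpol}, we can conclude the following corollaries, particularly $\alpha=\beta=1$,'' and no separate proof is given. Your explicit verification that $(\phi_{\alpha,\beta},\phi_\alpha)$ with $\alpha=\beta=1$ is an admissible Orlicz pair and your passage from Luxemburg-norm convergence to modular convergence simply spell out what the paper leaves implicit.
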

%	Since the \(\Delta_2\)-condition is not satisfied for the Orlicz function $\phi_{\alpha}$, the modular and norm convergence are not equivalent.

Next, we consider $(\phi_1,\phi_2)=(\phi_{\alpha},\phi_{\alpha, \beta})$.  The corresponding modular space includes functions $f \in {M}(\mathcal{J})$ for which
\[
I^{(\phi_1,\phi_2)}[\lambda f] = \int_{-1}^{1}\left\lbrace \left(\int_{-1}^{1} \left( \exp(\lambda |f(\textbf{x})|^{\alpha}) - 1 \right) \, dx_1 \right)^{\alpha}\log^{\beta}\left(e+\int_{-1}^{1} \left( \exp(\lambda |f(\textbf{x})|^{\alpha}) - 1 \right) \, dx_1\right)\right\rbrace dx_2
\]
is finite	for $\lambda > 0$.

\begin{cor} \label{c2}
	For every \(f \in L^{(\phi_{\alpha},{\phi_{\alpha,\beta}})}(\mathcal{J})\), and \(\lambda > 0\) such that
	\begin{flalign*}
		&\lim_{n\to \infty} \int_{-1}^{1}\left\lbrace \left(\int_{-1}^{1} \left( \exp(\lambda |(K_{n}f)(\textbf{x})-f(\textbf{x})|) - 1 \right) \, dx_1 \right)\right.\\
		&\quad\quad\quad\left.\log\left(e+\int_{-1}^{1} \left( \exp(\lambda |(K_{n}f)(\textbf{x})-f(\textbf{x})|) - 1 \right) \, dx_1\right)\right\rbrace dx_2=0.
	\end{flalign*}
\end{cor}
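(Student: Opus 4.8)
The plan is to identify Corollary \ref{c2} as the case $\alpha=\beta=1$ of Theorem \ref{interpol} for a judicious choice of generating pair. Put $\phi_1(u)=e^{u}-1$ (the exponential function) and $\phi_2(u)=u\log(e+u)$ (a logarithmic, Zygmund-type function), and set $\overrightarrow{\Phi}=(\phi_1,\phi_2)$. First I would check that $\phi_1$ and $\phi_2$ are genuine Orlicz functions: both are convex, non-decreasing, continuous (hence left-continuous), vanish at $0$ and diverge at $\infty$; moreover $\phi_2(2u)\le 4\phi_2(u)$, so $\phi_2$ obeys $\Delta_2$, while $\phi_1$ does not. Next I would unwind the two nested integrals in the definition of the modular $I^{\overrightarrow{\Phi}}$ for this particular pair: with $h:=\lambda|K_nf-f|$ the inner integral is $\int_{-1}^{1}\big(\exp(\lambda|(K_nf)(\textbf{x})-f(\textbf{x})|)-1\big)\,dx_1$, and applying $\phi_2$ to it and integrating in $x_2$ reproduces verbatim the double integral in the statement; likewise the hypothesis $f\in L^{(\phi_\alpha,\phi_{\alpha,\beta})}(\mathcal{J})$ with $\alpha=\beta=1$ is exactly $f\in L^{\overrightarrow{\Phi}}(\mathcal{J})$. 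Hence the asserted limit is precisely $\lim_{n\to\infty}I^{\overrightarrow{\Phi}}[\lambda(K_nf-f)]=0$.

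With these identifications the proof rests on two ingredients. Theorem \ref{interpol} gives $\|K_nf-f\|_{\overrightarrow{\Phi}}\to 0$. It then remains to pass from Luxemburg-norm convergence to modular convergence for the fixed $\lambda>0$: writing $\mu_n:=\|K_nf-f\|_{\overrightarrow{\Phi}}$ (the case $\mu_n=0$ being trivial since then $K_nf=f$ a.e.\ and the modular vanishes), for $n$ large enough that $2\mu_n<1/\lambda$ the definition of the Luxemburg norm yields $I^{\overrightarrow{\Phi}}[(K_nf-f)/(2\mu_n)]\le 1$, and the elementary inequality $I^{\overrightarrow{\Phi}}[tg]\le t\,I^{\overrightarrow{\Phi}}[g]$ for $t\in[0,1]$ — which follows by applying the convexity property of Definition \ref{xx} to $\phi_1$ inside the inner integral and, using that $\phi_2$ is non-decreasing and convex with $\phi_2(0)=0$, to $\phi_2$ in the outer integral — gives $I^{\overrightarrow{\Phi}}[\lambda(K_nf-f)]\le 2\lambda\mu_n\,I^{\overrightarrow{\Phi}}[(K_nf-f)/(2\mu_n)]\le 2\lambda\mu_n\to 0$. (If one prefers to argue entirely at the modular level, the same conclusion follows by rerunning the three-step scheme of Theorem \ref{interpol}: approximate $f$ by $g\in C(\mathcal{J})$ via Lemma \ref{lem 4.2}, decompose $\lambda(K_nf-f)=\tfrac13[3\lambda K_n(f-g)]+\tfrac13[3\lambda(K_ng-g)]+\tfrac13[3\lambda(g-f)]$, use subadditivity of $I^{\overrightarrow{\Phi}}$ on convex combinations, bound the first term by Theorem \ref{thm 4.1} and the third by the density estimate, and send the middle term to $0$ by Theorem \ref{thm 4.2}.)

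The only delicate point is this norm-to-modular passage. Since $\phi_1(u)=e^u-1$ fails the $\Delta_2$-condition, modular and norm convergence do not coincide in $L^{\overrightarrow{\Phi}}(\mathcal{J})$; nevertheless the implication actually used — norm convergence $\Rightarrow$ modular convergence for every $\lambda>0$ — always holds and needs only convexity of each $\phi_i$, so no growth condition is required. In the alternative modular-level route the corresponding subtlety is that $I^{\overrightarrow{\Phi}}$ is not subadditive outright, which forces the rescaled convex decomposition above and hence the use of Theorems \ref{thm 4.1} and \ref{thm 4.2} with the inflated parameter $3\lambda$; both statements hold for every positive parameter, so this causes no difficulty. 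Corollary \ref{c1} is proved in the same way after interchanging the exponential and Zygmund components of $\overrightarrow{\Phi}$.
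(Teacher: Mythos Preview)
Your proposal is correct and follows exactly the route the paper indicates: the paper offers no separate proof of Corollary \ref{c2} but simply declares in the preceding remark that it (and Corollary \ref{c1}) follow from Theorem \ref{interpol} upon taking $\alpha=\beta=1$, leaving the passage from Luxemburg-norm convergence to modular convergence implicit. You fill in precisely that gap --- the identification of the modular with the displayed double integral and the convexity argument giving $I^{\overrightarrow{\Phi}}[\lambda(K_nf-f)]\le 2\lambda\mu_n\to 0$ --- so your write-up is strictly more detailed than what the paper provides, but the underlying approach is the same.
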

Since \( \phi_{\alpha} \) does not satisfies the \( \Delta_2 \)-condition, it follows that in Corollary \ref{c1} and \ref{c2}, modular and norm convergence are not equivalent.

% If both $\phi_1$ and $\phi_2$ satisfy the $\Delta_2$-condition then one can conclude that modular convergence is equivalent to norm convergence.
\section{examples of activation functions and applications}\label{5}
%The activation function is vital for efficient performance in artificial neural networks. 
%			 It determines whether a neuron should be activated based on the input received, a decision made by computing the weighted sum of the inputs and incorporating a bias term.
%			The main purpose of the activation function is to introduce non-linearity into the output of the neuron, which allows the network to learn and model more complex patterns in the data.
%	This section serves various examples of $\rho$ along with graphical representations and error-estimations.

The sigmoidal function plays an essential role as the activation function in neural networks.
In this section, we provide some numerical examples to support the theoretical results. To demonstrate how multivariate Kantorovich-type neural network operators approximates multivariate functions, we select a two-variable function and show the corresponding fitting surface of graphs, and estimates of error in approximation.

%First, we introduce examples of sigmoidal functions that satisfy conditions (S1)–(S3) discussed in Section \ref{2}.  
First, we discuss a few examples of smooth sigmoidal functions used as activation functions satisfying (S1)–(S2) discussed in Section \ref{2}. 
%	These sigmoidal functions are also effective for learning algorithms.
An important example in this direction is the \emph{logistic function} $\rho_l$ (see \cite{Anas3, NN}) given by
\[
\rho_l(x) =(1 + e^{-x})^{-1}, \quad x \in \mathbb{R}.
\] 
It is to note that the logistic function satisfies conditions (S1) and (S2) and also possesses Lipschitz continuity. 
%	Moreover, the function \( \rho_l \) satisfies (S3) for all \( \alpha > 0 \) due to its exponential decay to zero as \( x \rightarrow -\infty \) (see \cite{Anas3, NN}). 

Another example in this direction is the \emph{hyperbolic tangent function} $\rho_h$ (see \cite{NN, Anas2, Anas3}), defined as 
\[
\rho_h(x) =2e^x (e^x + e^{-x})^{-1}, \quad x \in \mathbb{R}.
\]
The hyperbolic tangent function have been playing a significant role as activation functions. For instance, as shown in  \cite{mishraNN} that feedforward neural network (FNN) using the hyperbolic tangent activation function, even with only two hidden layers, can approximate functions with similar approximation capability as deeper ReLU-based neural networks.

%that FNN based on hyperbolic tangent function as an activation fuction even with 
%only two hidden layers, is suitable to approximate functions as effectively as deeper ReLU-based neural networks.

% By selecting the hyperbolic tangent function as the sigmoidal activation function in our proposed neural network operators, it is demonstrated that even with only two hidden layers, these operators are capable of approximating functions at rates comparable to, or superior to, those achieved by deeper ReLU-based neural networks (see \cite{mishraNN}).

The \emph{ramp function } $\rho_R$ (see \cite{NN, 2}) is a suitable choice for a non-smooth sigmoidal function and is defined as 
\[
\rho_R(x) =
\begin{cases}
	0, & \text{if } x < -\frac{1}{2}, \\
	x + \frac{1}{2}, & \text{if } -\frac{1}{2} \leq x \leq \frac{1}{2}, \\
	1, & \text{if } x > \frac{1}{2}.
\end{cases}
\]
%	 Moreover, condition (S3) is satisfied for every $\alpha>0$ (see \cite{Anas3, NN}).
As the density function corresponding to the ramp function has compact support, it follows that the absolute moment of any order is finite.
%	 The convergence behavior of our neural network operators utilizing the ramp function is discussed in \cite{costajama}. 

%For instance,
%the Fej$\mathbf{\acute{e}}$r's kernel (see \cite{k2007}) is another suitable choice for $\rho$, which is defined as follows 
%\[
%F(x) := \frac{1}{2} \text{sinc}^2\left(\frac{x}{2}\right), \quad x \in \mathbb{R}.
%\]

%Next example in this direction is $\rho_{B_n}$. 
A constructive example of sigmoidal function associated with the B-spline is defined by
%The sigmoidal function associated with the B-spline is defined by
$$ \rho_{B_n}(x)= \int_{-\infty}^{x}B_n(t)dt,$$

where central B-spline of order $n \in \NN$ is defined as (see \cite{k2007})
\[
B_n(x) := \frac{1}{(n - 1)!} \sum_{i=0}^{n} (-1)^{i} \binom{n}{i} \left(\frac{n}{2} + x - i\right)^{r-1}_+, \quad x \in \RR,
\]
where \((x)_{+} := \max\{x, 0\}\) represents the positive part of \(x \in \mathbb{R}\).
Also, the Fourier transform of \(B_n\) is given by
\[
\hat{B}_n(v) = \text{sinc}^n\left( \frac{v}{2\pi} \right), \quad v \in \mathbb{R}.
\]
%where
%\[
%\text{sinc}(x) := 
%\begin{cases} 
%	\frac{\sin(\pi x)}{\pi x}, & x \in \mathbb{R} \setminus \{0\}, \\
%	1, & x = 0.
%\end{cases}
%\]
 The functions \(B_n\) defined for all \(n \in \mathbb{N}\) are bounded and have compact support within the interval  \([-n/2, n/2]\) on \(\mathbb{R}\). As a result, we have \(B_n \in L^1(\mathbb{R})\), and \(A_{\nu}(\Psi_\rho) < +\infty\) holds.

To demonstrate the practical performance of the proposed neural network operators given in~\eqref{multi}, we present graphical illustrations and error estimations. Specifically, we exhibit their effectiveness in reconstructing bivariate functions using different types of sigmoidal activation functions. In addition, we explore their applications in image processing tasks, including image reconstruction, denoising, scaling, as well as image inpainting. Furthermore, we also discuss the application of mixed norm structures in signal and image processing.

	\subsection{ Graphical  analysis}
	Here, we present some numerical examples to  demonstrate the approximation capabilities of the operator (\ref{multi}) for $r=2$. 
	
	\textbf{Example 1.} Consider the function $f : [0,1] \times [0,1] \rightarrow  \RR$ defined as $
	f(u_1, u_2) = \exp\left(-70\left[(u_1 - 0.3)^2 + (u_2 - 0.5)^2\right]\right) + \exp\left(-70\left[(u_1 - 0.7)^2 + (u_2 - 0.5)^2\right]\right).$
The convergence behavior and approximation accuracy of the operator (\ref{multi}) are illustrated in Figures~\ref{g1} and~\ref{g2}. Furthermore, Table~\ref{t1} and  \ref{or1} shows the numerical approximation errors of the function \( f \) by \( (K_nf) \) in the setting of mixed norm Lebesgue space and mixed norm Orlicz space.  These results clearly demonstrate that the operator  (\ref{multi}) provides an effective approximation to \( f \), with improved accuracy as the parameter \( n \) increases.

	\begin{figure}[htbp]
		\centering
		
		% First row
		\begin{subfigure}{0.3\textwidth}
			\centering
			\includegraphics[width=\linewidth]{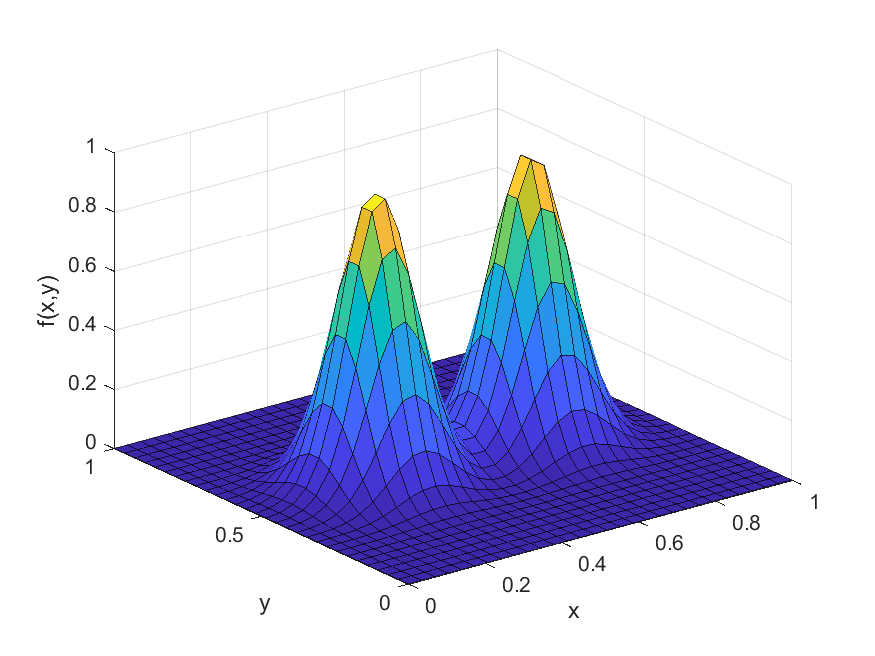}
			\caption{Original function}
		\end{subfigure}
		\hfill
		\begin{subfigure}{0.3\textwidth}
			\centering
			\includegraphics[width=\linewidth]{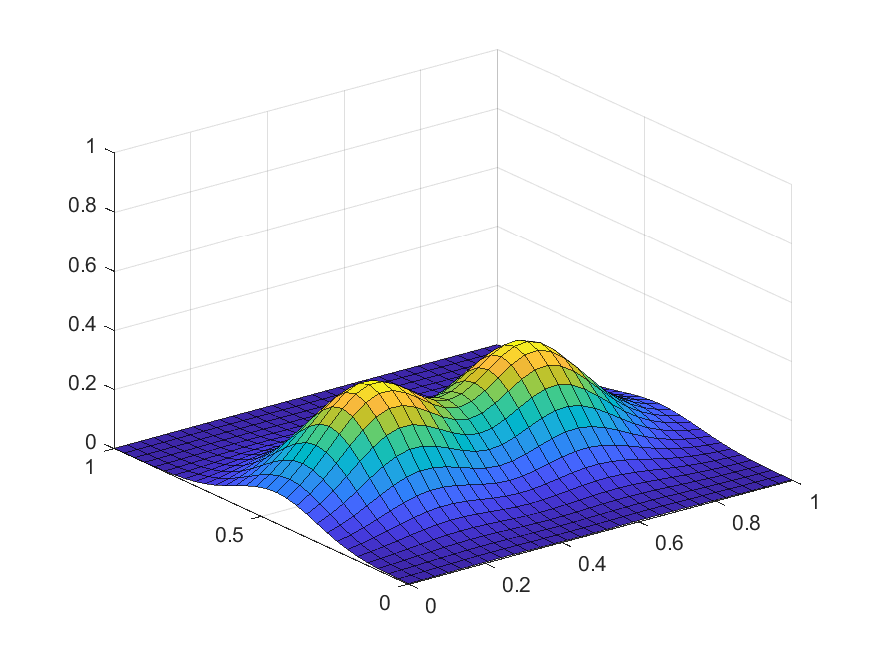}
			\caption{$K_{10} f$}
		\end{subfigure}
		\hfill
		\begin{subfigure}{0.3\textwidth}
			\centering
			\includegraphics[width=\linewidth]{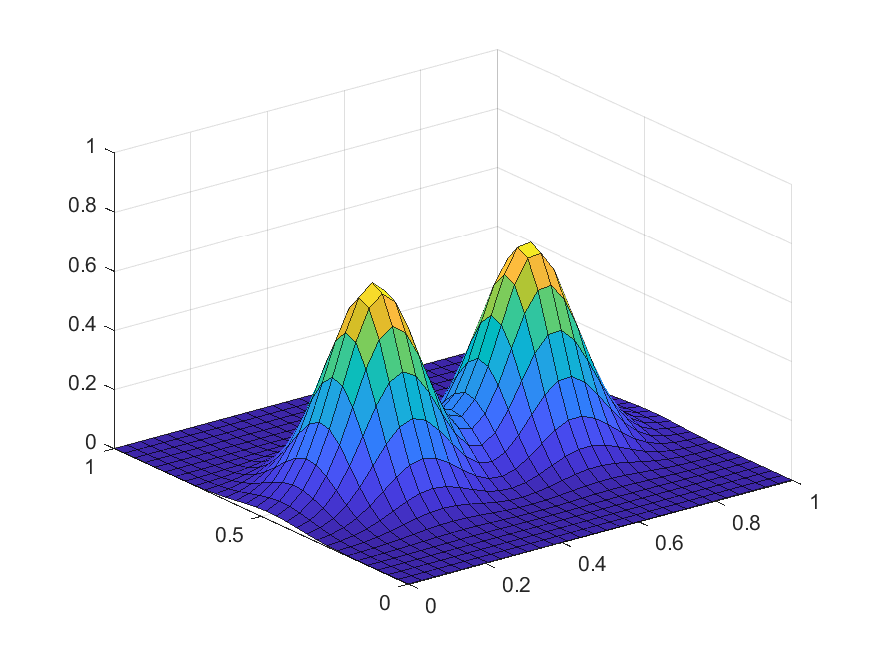}
			\caption{$K_{20} f$}
		\end{subfigure}
		
		\vspace{1em}
		
		% Second row
		\begin{subfigure}{0.3\textwidth}
			\centering
			\includegraphics[width=\linewidth]{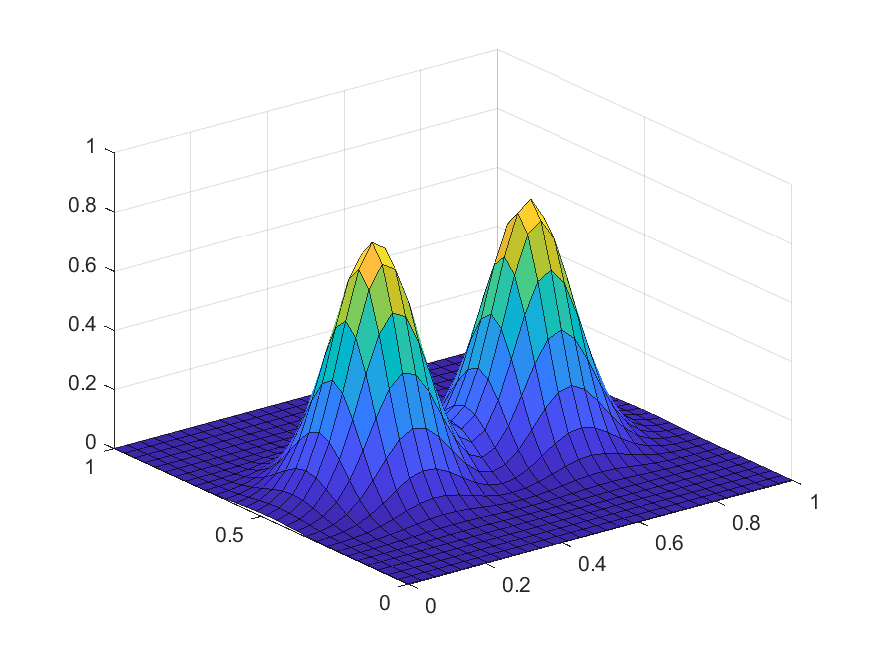}
			\caption{$K_{30} f$}
		\end{subfigure}
		\hfill
		\begin{subfigure}{0.3\textwidth}
			\centering
			\includegraphics[width=\linewidth]{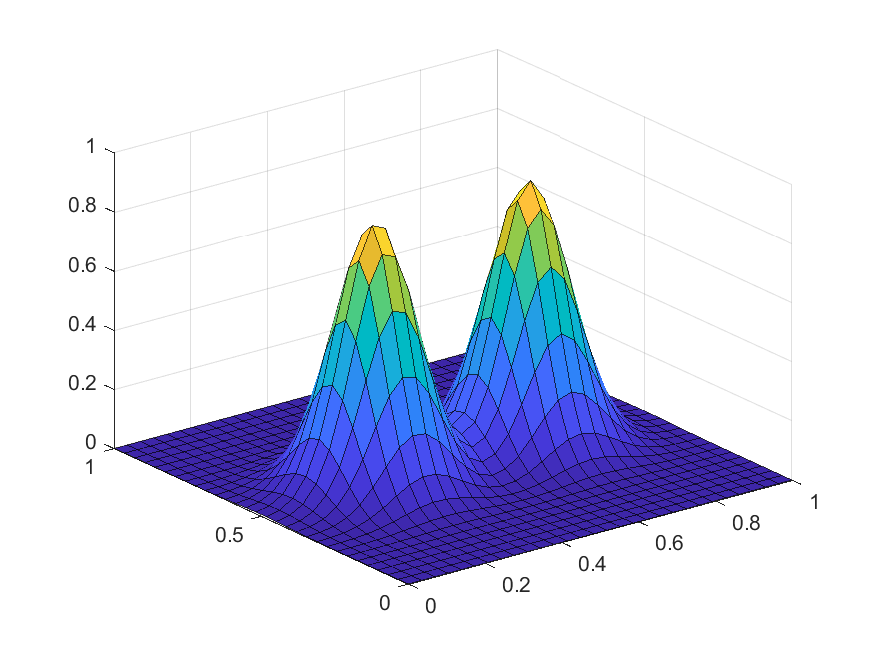}
			\caption{$K_{40} f$}
		\end{subfigure}
		\hfill
		\begin{subfigure}{0.3\textwidth}
			\centering
			\includegraphics[width=\linewidth]{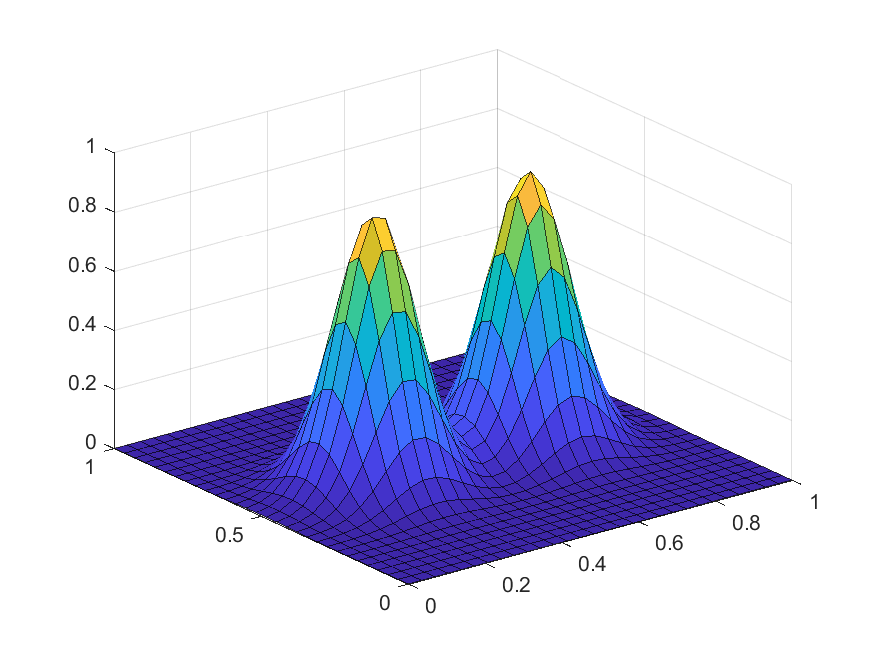}
			\caption{$K_{50} f$}
		\end{subfigure}

		\caption{Approximation of $f$ by $(K_{n} f)$ based on $\rho_h$ for different values of $n$}
		\label{g1}
	\end{figure}

	\begin{table}[h!]
		\centering
		\renewcommand{\arraystretch}{1.3}
		\setlength{\tabcolsep}{12pt}
		\caption{Error-estimates for  modular convergence of $f$ by $(K_n f)$.}
		\vspace{0.5em}
		\label{or1}
		\rowcolors{2}{gray!10}{white}
		\resizebox{0.75\textwidth}{!}{ % <<< STRETCHES WIDTH
			\begin{tabular}{c c c c c}
				\toprule
				\rowcolor{gray!25}
				$n$ 
				& \multicolumn{2}{c}{$L^{\left(\phi_\alpha,\phi_{\alpha,\beta}\right)}$} 
				& \multicolumn{2}{c}{$L^{\left(\phi_{\alpha,\beta},\phi_\alpha\right)}$} \\
				\cmidrule(lr){2-3} \cmidrule(lr){4-5}
				\rowcolor{gray!25}
				& \makecell{$\alpha = 1.3,$\\$\beta = 1$} 
				& \makecell{$\alpha = 2,$\\$\beta = 1$}
				& \makecell{$\alpha = 1.8,$\\$\beta = 2.2$} 
				& \makecell{$\alpha = 2,$\\$\beta = 1.7$} \\
				\midrule
				10  & 0.027429 & 0.001996 & 0.005597 & 0.002440 \\
				20 & 0.009345 & 0.000175 &  0.000748 & 0.000218 \\
				30 & 0.004903 & 0.000037 &0.000204 & 0.000044\\
				40 & 0.003263 &0.000014 & 0.000091& 0.000016 \\
				\bottomrule
			\end{tabular}
		}
	\end{table}

		\begin{table}[h!]
		\centering
		\renewcommand{\arraystretch}{1.2}
		\setlength{\tabcolsep}{10pt}
		\caption{Max absolute error and Mixed $L^{(p_1,p_2)}$-error of approximation of   $(K_nf)$ to $f$ based on $\rho_h$}
		\label{t1}
		\vspace{0.5em}
		\rowcolors{2}{gray!10}{white}
		\begin{tabular}{>{\bfseries}c c c c c}
			\toprule
			\rowcolor{gray!25}
			$n$ & Max absolute error & Mixed $L^{2,3}$-error & Mixed $L^{4,6}$-error & Mixed $L^{3,7}$-error \\
			\midrule
			%		5  & 0.84721 &  0.21378 & 0.37783 & 0.36379 \\
			10 &  0.66529 & 0.16642 & 0.29694 & 0.28452 \\
			%		15 &  0.47475 & 0.12461 & 0.22063 & 0.21132\\
			20 & 0.33918 & 0.097045 & 0.16852 & 0.16169 \\
			%		25 & 0.24753 & 0.079134 & 0.13443 & 0.12931 \\
			30 & 0.18669 & 0.067351 &  0.11239 & 0.10835 \\
			40 & 0.1147 & 0.053271 & 0.087012 & 0.084134 \\
			\bottomrule
		\end{tabular}
	\end{table}

	\begin{figure}[H]
	\centering
	\begin{subfigure}[t]{0.54\textwidth}
		\includegraphics[width=\linewidth]{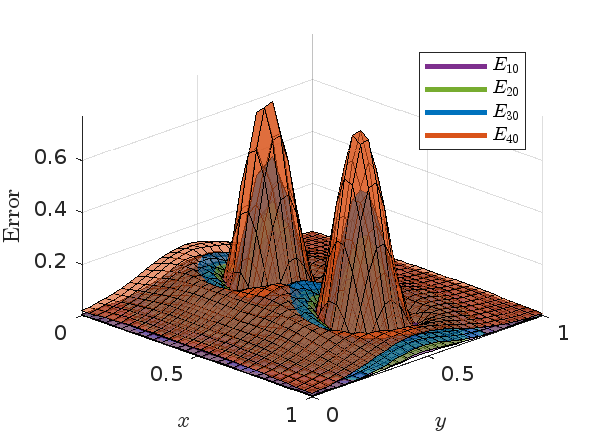}  % Replace with your actual filename
		\caption{Error of approximation of $f$ by $(K_{n}f)$ for\\  $n=10,20,30$ and $40$  }
		\label{fig:K10}
	\end{subfigure}
	\hfill
	\begin{subfigure}[t]{0.45\textwidth}
		\includegraphics[width=\linewidth]{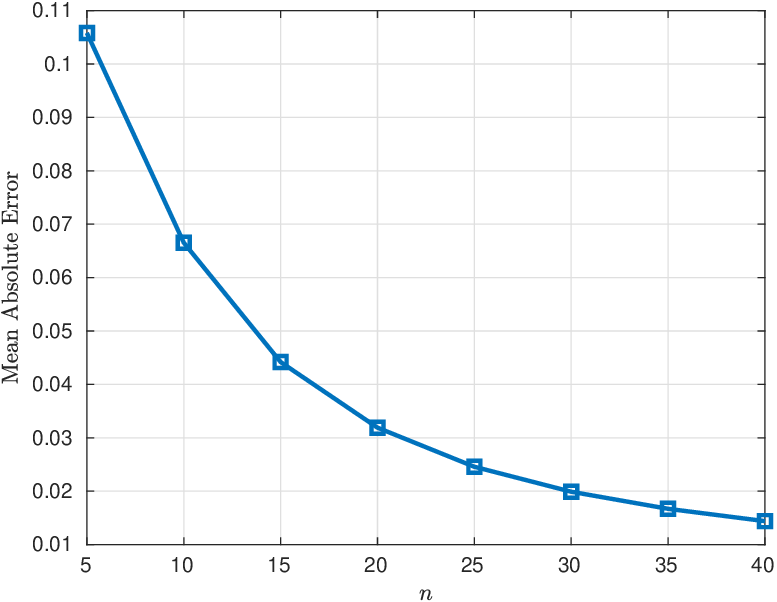} % Replace with your actual filename
		\caption{Numerical evaluation of approximation error versus $n$}
		\label{fig:K20}
	\end{subfigure}
	\caption{Enhanced approximation of $f$ by $(K_{n}f)$  with increasing $n$ }
	\label{g2}
\end{figure}

		%\begin{algorithm}[H]
		%	\caption{Surface reconstruction by Kantorovich-type shallow NN  operators}
		%	\label{alg:kantorovich-short}
		%	\begin{algorithmic}[1]
			%		\Function{Kantorovich-Approx}{$n$, $\rho(x)$, $f(u_1, u_2)$}
			%		\State Define activation function: $K(x) = \frac{1}{2}(\rho(x+1) - \rho(x-1))$
			%		\State Set: $\Psi(x, y) = K(x) \cdot K(y)$
			%		\State Initialize cell averages $A_{k_1, k_2} \gets 0$ for $k_1, k_2 \in \{-n, \ldots, n{-}1\}$
			%		
			%		\For{$k_1 = -n$ to $n{-}1$}
			%		\For{$k_2 = -n$ to $n{-}1$}
			%		\State Compute:
			%		\[
			%		A_{k_1, k_2} \gets n^2 \cdot \iint_{I_{k_1, k_2}} f(u_1, u_2)\, du_1 du_2
			%		\]
			%		where $I_{k_1, k_2} = \left[\frac{k_1}{n}, \frac{k_1+1}{n}\right] \times \left[\frac{k_2}{n}, \frac{k_2+1}{n}\right]$
			%		\EndFor
			%		\EndFor
			%		
			%		\State For each $(x, y)$, compute:
			%		\[
			%		\tilde{f}(x, y) = \frac{
				%			\sum_{k_1, k_2} A_{k_1, k_2} \cdot \Psi(n x - k_1, n y - k_2)
				%		}{
				%			\sum_{k_1, k_2} \Psi(n x - k_1, n y - k_2)
				%		}
			%		\]
			%%		where $T_x(x)$, $T_y(y)$ use barycentric sigmoidal weights.
			%		\EndFunction
			%	\end{algorithmic}
		%\end{algorithm}
		%

		\textbf{Example 2.} Let us consider the function $g : [0,1] \times [0,1] \rightarrow  \RR$ defined as $$g(u_1, u_2) = \frac{\sin\left(15 \cdot r(u_1, u_2)\right)}{1 + 10 \cdot r(u_1, u_2)},$$ where \( r(u_1, u_2) = \sqrt{(u_1 - 0.5)^2 + (u_2 - 0.5)^2} \). From Figures~\ref{g3} and~\ref{g4}, we observe that as \( n \) increases, the convergence behavior and approximation accuracy of the operator~\eqref{multi} improve significantly. Table~\ref{t2} and \ref{or2} presents the corresponding error-estimations for the function $g$ by $(K_{n}g)$ in the setting of mixed norm Lebesgue space and mixed norm Orlicz space.
		%				
		%				% In this case, the Kantorovich-type deep NN operators \( (K_{n_1,n_2}g) \) activated by logistic function $\rho_l$ are utilized.  The point-wise  approximation error of \( (K_{n_1,n_2}g) \) is presented in Table~\ref{t2}
		
			\begin{figure}[htbp]
								\centering
								
								% First row
								\begin{subfigure}{0.3\textwidth}
										\centering
										\includegraphics[width=\linewidth]{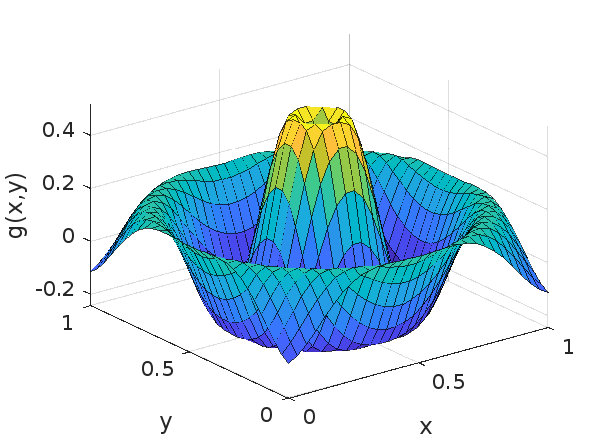}
										\caption{Original function }
									\end{subfigure}
								\hfill
								\begin{subfigure}{0.3\textwidth}
										\centering
										\includegraphics[width=\linewidth]{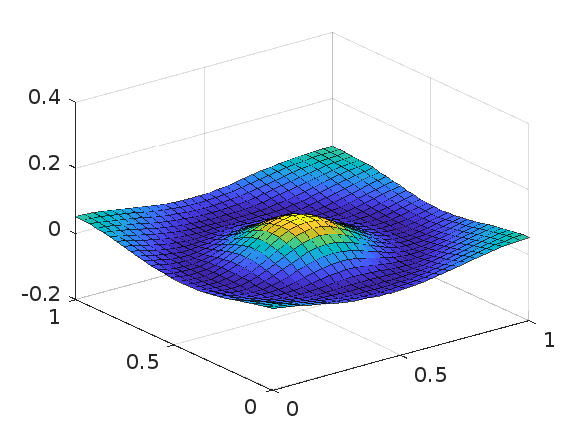}
										\caption{$K_{10} g$}
									\end{subfigure}
								\hfill
								\begin{subfigure}{0.3\textwidth}
										\centering
										\includegraphics[width=\linewidth]{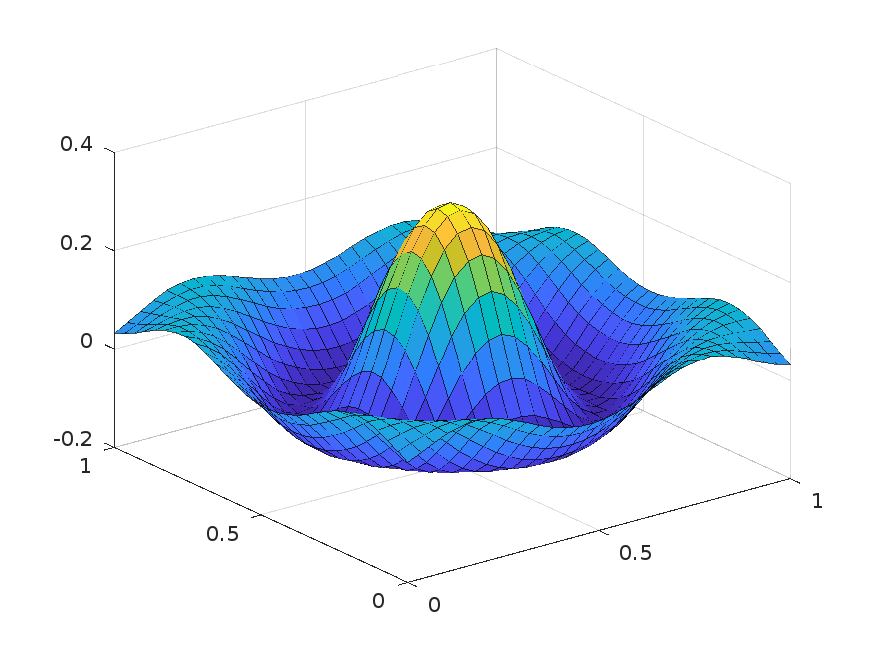}
										\caption{$K_{20} g$}
									\end{subfigure}
								
								\vspace{1em}
								
								% Second row
								\begin{subfigure}{0.3\textwidth}
										\centering
										\includegraphics[width=\linewidth]{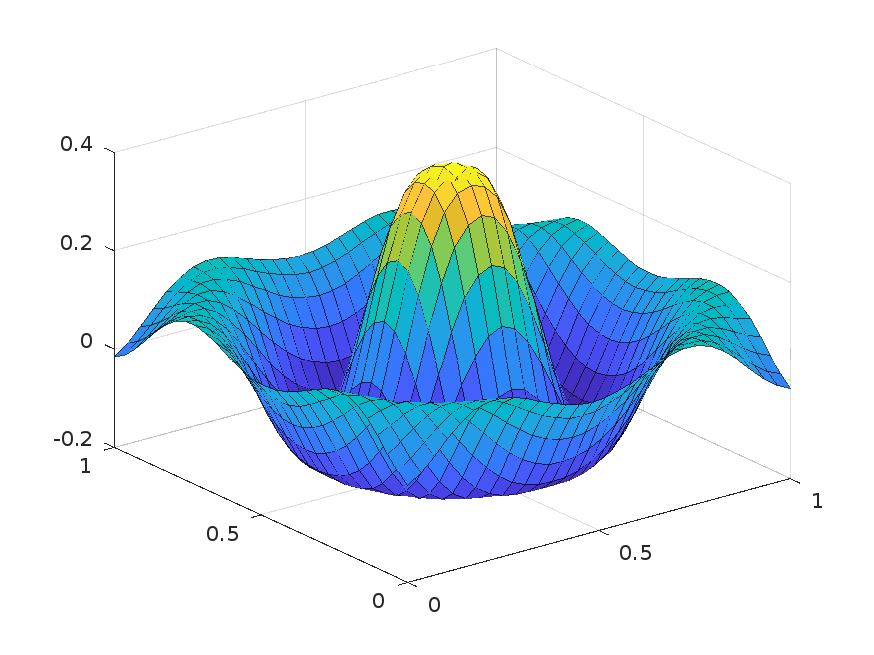}
										\caption{$K_{30} g$}
									\end{subfigure}
								\hfill
								\begin{subfigure}{0.3\textwidth}
										\centering
										\includegraphics[width=\linewidth]{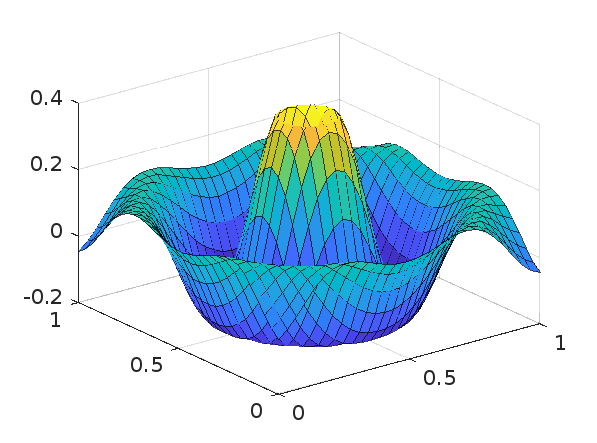}
										\caption{$K_{40} g$}
									\end{subfigure}
								\hfill
								\begin{subfigure}{0.3\textwidth}
										\centering
										\includegraphics[width=\linewidth]{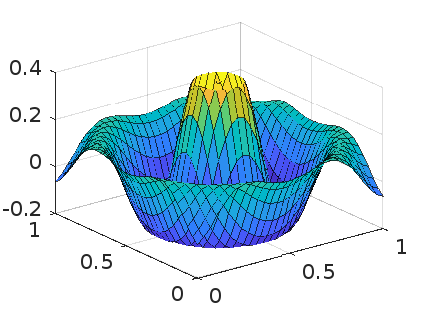}
										\caption{$K_{50} g$}
									\end{subfigure}
								
								\caption{Approximation of $g$ by $(K_{n} g)$ based on $\rho_l$ for different values of $n$}
								\label{g3}
							\end{figure}
		
			\begin{figure}[H]
			\centering
			\begin{subfigure}[t]{0.54\textwidth}
				\includegraphics[width=\linewidth]{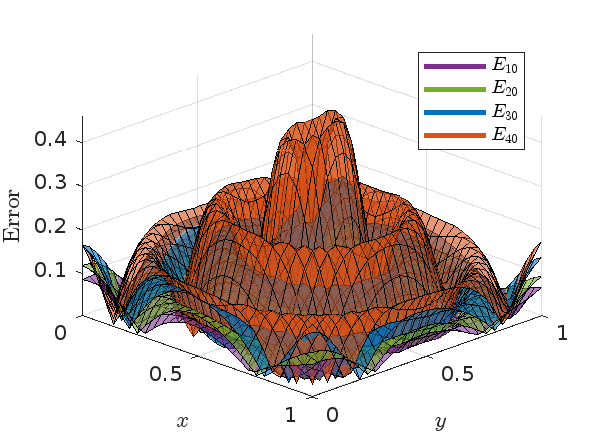}  % Replace with your actual filename
				\caption{Error of approximation of $g$ by $(K_{n}g)$ for\\  $n=10,20,30$ and $40$  }
				\label{fig:K10}
			\end{subfigure}
			\hfill
			\begin{subfigure}[t]{0.45\textwidth}
				\includegraphics[width=\linewidth]{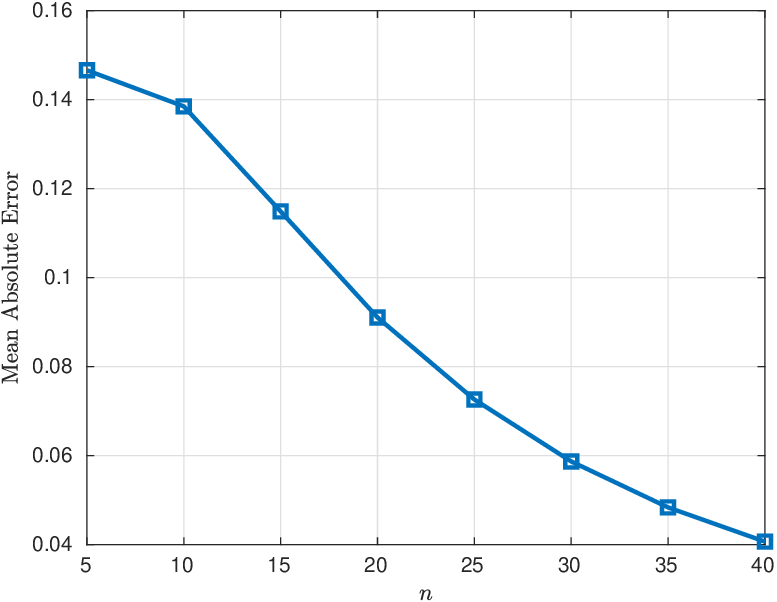}  % Replace with your actual filename
				\caption{Numerical evaluation of approximation error versus $n$}
				\label{fig:K20}
			\end{subfigure}
			\caption{Enhanced approximation of $g$ by $(K_{n}g)$  with increasing $n$ }
			\label{g4}
		\end{figure}
		
		\begin{table}[h!]
			\centering
			\renewcommand{\arraystretch}{1.2}
			\setlength{\tabcolsep}{10pt}
			\caption{Max absolute error and Mixed $L^{(p_1,p_2)}$-error of approximation of $(K_ng)$ to $g$  based on $\rho_l$}
			\label{t2}
			\vspace{0.5em}
			\rowcolors{2}{gray!10}{white}
			\begin{tabular}{>{\bfseries}c c c c c}
				\toprule
				\rowcolor{gray!25}
				$n$ & Max absolute error & Mixed $L^{3,4}$-error & Mixed $L^{2,5}$-error & Mixed $L^{3,6}$-error \\
				\midrule
				%		5  & 0.84721 &  0.21378 & 0.37783 & 0.36379 \\
				10 & 0.44481 & 0.20246 & 0.18735 & 0.21426 \\
				%		15 &  0.47475 & 0.12461 & 0.22063 & 0.21132\\
				20 & 0.41317  & 0.13517 & 0.12613 & 0.14362 \\
				%		25 & 0.24753 & 0.079134 & 0.13443 & 0.12931 \\
				30 & 0.41143 &  0.096905 &  0.08971 & 0.10421\\
				40 & 0.32870  &  0.078261 & 0.070622 &  0.086156 \\
				\bottomrule
			\end{tabular}
		\end{table}

		\begin{table}[h!]
			\centering
			\renewcommand{\arraystretch}{1.3}
			\setlength{\tabcolsep}{12pt}
			\caption{Error-estimates for modular convergence of $g$ by $(K_ng)$.}
			\label{or2}
			\vspace{0.5em}
			\rowcolors{2}{gray!10}{white}
			\resizebox{0.75\textwidth}{!}{ % <<< STRETCHES WIDTH
				\begin{tabular}{c c c c c}
					\toprule
					\rowcolor{gray!25}
					$n$ 
					& \multicolumn{2}{c}{$L^{\left(\phi_{\alpha,\beta},\phi_\alpha\right)}$}
					& \multicolumn{2}{c}{$L^{\left(\phi_\alpha,\phi_{\alpha,\beta}\right)}$} 
					\\
					\cmidrule(lr){2-3} \cmidrule(lr){4-5}
					\rowcolor{gray!25}
					& \makecell{$\alpha = 1,$\\$\beta = 1.3$} 
					& \makecell{$\alpha = 2,$\\$\beta = 1.5$}
					& \makecell{$\alpha = 2,$\\$\beta = 1.2$} 
					& \makecell{$\alpha = 1.5,$\\$\beta = 2$} \\
					\midrule
					10  & 0.163866 & 0.001470 & 0.001239& 0.018829\\
					20 & 0.106316 & 0.000274 & 0.000237 & 0.007283 \\
					30 & 0.071351 & 0.000066 & 0.000058 & 0.003133 \\
					40 & 0.052765 & 0.000024& 0.000022 &  0.001679 \\
					\bottomrule
				\end{tabular}
			}
		\end{table}

			To validate Theorem \ref{thm 3.3} and \ref{interpol}, i.e., the convergence of multivariate Kantorovich-type NN operators $(K_nf)$ in the setting of mixed norm Lebesgue space $L^{\overrightarrow{\mathscr{P}}}(\mathcal{J})$ and mixed norm Orlicz space $L^{\overrightarrow{\Phi}}(\mathcal{J})$, we provide Tables \ref{t1}- \ref{or2} for error-estimation with respect to these function spaces.

			In Algorithm~\ref{alg1}, we provide the procedure for surface reconstruction in the context of bivariate function approximation using Kantorovich-type  neural network operators (\ref{multi}).

			\begin{algorithm}[H]
				\caption{Surface Reconstruction by Kantorovich-Type Neural Network Operators}
				\label{alg1}
				\begin{flushleft}
					\textbf{function} \texttt{Kantorovich-Approx}$(n, \rho(x), f(u_1, u_2))$\\
					\textbf{Input:} Parameter $n$, sigmoidal function $\rho(x)$, bivariate function $f(u_1, u_2)$\\
					\textbf{Output:} Reconstructed surface $\tilde{f}(x, y)$
				\end{flushleft}
				\vspace{-0.5em}
				\begin{align*}
					&\text{1. Define activation kernel: } K(x) = \frac{1}{2}(\rho(x+1) - \rho(x-1)) \\
					&\text{2. Define bivariate basis: } \Psi(x, y) = K(x) \cdot K(y) \\
					&\text{3. For } k_1, k_2 = -n \text{ to } n{-}1: \\
					&\quad I_{k_1, k_2} \leftarrow \left[\frac{k_1}{n}, \frac{k_1+1}{n}\right] \times \left[\frac{k_2}{n}, \frac{k_2+1}{n}\right] \\
					&\quad A_{k_1, k_2} \leftarrow n^2 \cdot \iint_{I_{k_1,k_2}} f(u_1, u_2) \, du_1 \, du_2 \\
					&\text{4. For each evaluation point } (x, y): \\
					&\quad \tilde{f}(x, y) = 
					\frac{
						\sum\limits_{k_1, k_2} A_{k_1, k_2} \cdot \Psi(n x - k_1, n y - k_2)
					}{
						\sum\limits_{k_1, k_2} \Psi(n x - k_1, n y - k_2)
					}
				\end{align*}
			\end{algorithm}

%				Having demonstrated the effectiveness of the proposed operator, we now proceed to highlight its utility in various image processing tasks.

				\subsection{Applications in Digital Image Processing} \label{appimg}
				The family of NN operators has widespread applications in image processing.
				% Specifically, we present an application of Kantorovich neural network operators in image reconstruction. 
				These operators serve as convenient tool for filling in missing information within a data set.
				%  and correcting existing data. 
				In this process, an analog image is a function of two continuous variables, while a digital image is a discrete signal represented as a matrix. Throughout this procedure, the function corresponding to the original image matrix is used to obtain a new matrix using the multivariate Kantorovich-type NN operator serving as an approximator of the original image.
				An image with a specific resolution size $u \times v$ is a discrete structure composed of a finite set of pixels captured by an image system, from which a corresponding grayscale image matrix $C=(c_{ij})_{i,j \in\NN}, i=1,...,u; j=1,...,v$, \hspace{1pt} can be derived. The matrix representation of gray scale image matrix, denoted by $C$ can be viewed as a two-dimensional step function $I$ in $L^{(p_1,p_2)}(\RR^2)$, where $1 \leq p_1,p_2 <\infty.$ The function $I$ is defined as follows
				\begin{equation}\label{ii}
		I(x,y)=\sum_{i=1}^{u}\sum_{j=1}^{v}c_{ij}\cdot \mathbf{1}_{ij}(x,y)
				\end{equation}
				where 	\[
				\mathbf{1}_{ij}(x,y) := 
				\begin{cases} 
					1, &  (x,y)\in (i-1,i]\times (j-1,j], \\
					0, & otherwise.
				\end{cases}
				\]
				%$1_{ij}(x,y)=1,$ for $(x,y)\in (i-1,i]\times (j-1,j]$ and  $1_{ij}(x,y)=0,$ otherwise.
				Thus, $I(x,y)$ maps every index pair $(i,j)$ to the corresponding value $c_{ij}$.
				In this study, two quantitative metrics: PSNR and SSIM are utilized to assess the quality of the reconstructed images.
				
				\textbf{The Structural Similarity Index Measure (SSIM)}: It evaluates the quality of an image primarily by analyzing its brightness $\mathcal{B}(A,B)$, contrast $\mathcal{C}(A,B)$, and structure $\mathcal{E}(A,B)$, defined by
				\[
				\mathcal{B}(A,B) = \frac{2\mu_A\mu_B + d_1}{\mu_A^2 + \mu_B^2 + d_1},\quad
				\mathcal{C}(A,B) = \frac{2\sigma_A \sigma_B + d_2}{\sigma_A^2 + \sigma_B^2 + d_2},
				\quad
				\mathcal{E}(A,B)= \frac{\sigma_{AB} + d_3}{\sigma_A \sigma_B + d_3}
				\] 
				where  $\mu_A$ and $\mu_B$ are the mean values,
				$\sigma_A^2$ and $\sigma_B^2$ are their variances, and
				$\sigma_{AB}$ is the covariance of $A$ and $B$. Moreover,
				$d_1 = (k_1L)^2$, $d_2 = (k_2L)^2$, $d_3 = \frac{d_2}{2}$, and
				$L$ is the range of pixel values with $k_1 = 0.01$, $k_2 = 0.03$, $k_3 = 0.03$.
				Then, the SSIM is expressed as
				\[
				SSIM(A,B) := [\mathcal{B}(A,B)^\alpha \cdot \mathcal{C}(A,B)^\beta \cdot \mathcal{E}(A,B)^\gamma].
				\]
				
				For $\alpha = \beta = \gamma = 1$, SSIM is expressed as
				\[
				SSIM(A,B) = \frac{(2\mu_A\mu_B + d_1)(2\sigma_{AB} + d_2)}{(\mu_A^2 + \mu_B^2 + d_1)(\sigma_A^2+ \sigma_B^2 + d_2)}.
				\]
				
				Next, we present the formulation of the Peak Signal to Noise Ratio (PSNR), which is defined in terms of the Mean Square Error (MSE).
				
				\textbf{The Peak Signal to Noise Ratio (PSNR):} The MSE of the reconstructed image
				can be written as
				\[
				MSE(A,B) = \frac{1}{XY}\sum_{i=1}^{X} \sum_{j=1}^{Y}[A(i,j)-B(i,j)]^2,
				\]
				where original and reconstructed images are denoted by \( A \) and \( B \) respectively. Similarly, \( A(i, j) \) and \( B(i, j) \) represent the pixel values at the corresponding coordinates. Moreover, the corresponding PSNR is given by
				$$PSNR = 10 \log_{10} \left( \frac{(MAX)^2}{MSE} \right),$$
				% Here, \( max= 1 \).
				where \( `MAX$'$ \) denotes the maximum possible pixel value of the image.
				%\setcounter{figure}{2} 
				
				%\begin{figure}[h!]
				%	\centering
				%	\begin{subfigure}{0.32\textwidth}
					%		\centering
					%		\includegraphics[width=\textwidth]{1} % Replace with your image file
					%		\vspace{-12mm} 
					%		\caption{n=1}
					%	\end{subfigure}
				%	\hfill
				%	\begin{subfigure}{0.32\textwidth}
					%		\centering
					%		\includegraphics[width=\textwidth]{2} % Replace with your image file
					%		\vspace{-12mm} 
					%		\caption{n=2}
					%	\end{subfigure}
				%	\hfill
				%	\begin{subfigure}{0.32\textwidth}
					%		\centering
					%		\includegraphics[width=\textwidth]{3} % Replace with your image file
					%		\vspace{-12mm} 
					%		\caption{n=3}
					%	\end{subfigure}
				%	
				%	\vspace{5mm} % Add some space between the rows of images
				%	
				%	\begin{subfigure}{0.32\textwidth}
					%		\centering
					%		\includegraphics[width=\textwidth]{4} % Replace with your image file
					%		\vspace{-12mm} 
					%		\caption{n=4}
					%	\end{subfigure}
				%	\hfill
				%	\begin{subfigure}{0.32\textwidth}
					%		\centering
					%		\includegraphics[width=\textwidth]{5} % Replace with your image file
					%		\vspace{-12mm} 
					%		\caption{n=5}
					%	\end{subfigure}
				%	\hfill
				%	\begin{subfigure}{0.32\textwidth}
					%		\centering
					%		\includegraphics[width=\textwidth]{6} % Replace with your image file
					%		\vspace{-12mm} 
					%		\caption{n=6}
					%	\end{subfigure}
				%	
				%	\caption{ Approximation of Kantorovich NN operator activated by $\rho_h$ for $n=1$ to $n=6$.}
				%	\label{fig3}
				%\end{figure}
				
				\subsubsection{Image reconstruction}
%				In this subsection we demonstrate image reconstruction using multivariate Kantorovich-type NN operators (\ref{multi}) by collecting a random image data.
				
				We begin by demonstrating the application of   (\ref{multi}) in image reconstruction. Using the function representation of an image 
			$I$, as described in (\ref{ii}), we apply Algorithm \ref{alg2} to reconstruct a grayscale image of a \textit{Bird 1} based on its corresponding image function and the two-dimensional version of the density function.

			\begin{figure}[h!]
				\centering
				
				% First row
				\begin{subfigure}[c]{0.35\textwidth}
					\centering
					\includegraphics[width=\linewidth]{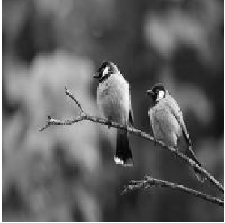}
					\vspace{-1em}
					\caption{Original image $128 \times 128$\\\makebox[\linewidth]{(Bird 1)}}
				\end{subfigure}
				\hfill
				\begin{subfigure}[c]{0.35\textwidth}
					\centering
					\includegraphics[width=\linewidth]{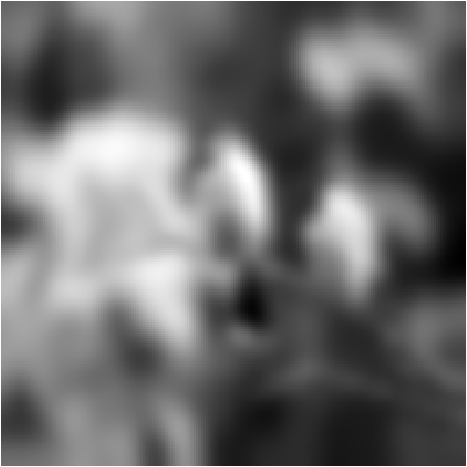}
					\vspace{-1em}
					%		\caption{Reconstructed image with\\ $n=50$. PSNR=}
					\captionsetup{justification=centering, font=small, skip=2pt}
					\caption{Reconstructed image with  $n=50$ PSNR=22.64, SSIM=0.7625}
				\end{subfigure}
				
				\vspace{2mm}
				
				% Second row
				\begin{subfigure}[c]{0.35\textwidth}
					\centering
					\includegraphics[width=\linewidth]{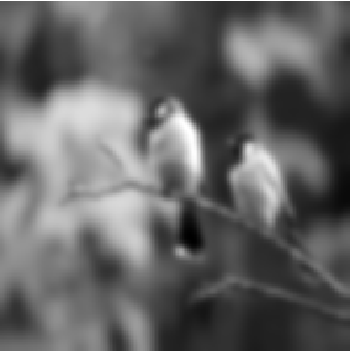}
					\vspace{-1em}
					\captionsetup{justification=centering, font=small, skip=2pt}
					\caption{Reconstructed image with $n=100$ PSNR=25.26, SSIM=0.8505}
				\end{subfigure}
				\hfill
				\begin{subfigure}[c]{0.35\textwidth}
					\centering
					\includegraphics[width=\linewidth]{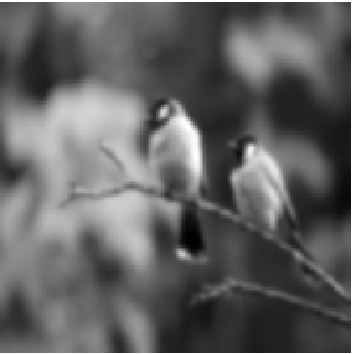}
					\vspace{-1em}
					\captionsetup{justification=centering, font=small, skip=2pt}
					\caption{Reconstructed image with $n=150$ PSNR=27.02, SSIM=0.8969}
				\end{subfigure}
				
				\vspace{2mm}
				
				% Third row
				\begin{subfigure}[c]{0.35\textwidth}
					\centering
					\includegraphics[width=\linewidth]{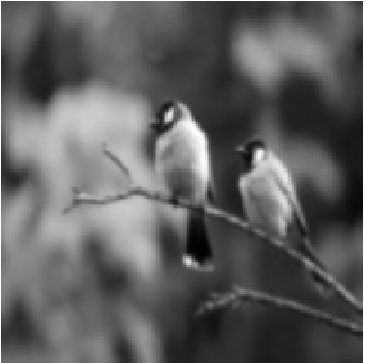}
					\vspace{-1em}
					\captionsetup{justification=centering, font=small, skip=2pt}
					\caption{Reconstructed image with $n=200$ PSNR=27.94, SSIM=0.9289}
				\end{subfigure}
				\hfill
				\begin{subfigure}[c]{0.35\textwidth}
					\centering
					\includegraphics[width=\linewidth]{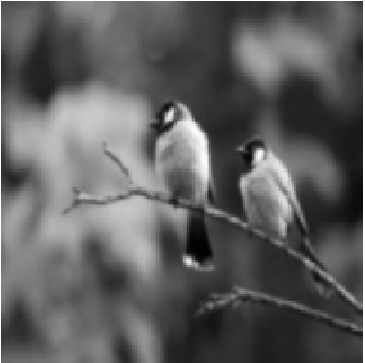}
					\vspace{-1em}
					\captionsetup{justification=centering, font=small, skip=2pt}
					\caption{Reconstructed image with  $n=250$ PSNR=29.36, SSIM=0.9466}
				\end{subfigure}
				
				\caption{Image reconstruction using multivariate Kantorovich-type  NN operators with $\rho_h$ for different values of $n$.}
				\label{g5}
			\end{figure}
			
				In Figure~\ref{g5}, we illustrate the visual quality of the reconstructed image for various randomly chosen values of \( n \). Along with this, we provide the performance evaluation of \(( K_{n}f) \) based on PSNR and SSIM metrics (see Table \ref{t11}). The procedure for reconstruction of grayscale image using multivariate Kantorovich-type neural network operators is described in Algorithm~\ref{alg2}.

			\begin{table}[h!]
				\centering
				\renewcommand{\arraystretch}{1.2}
				\setlength{\tabcolsep}{10pt}
				\caption{Evaluation of image reconstruction using PSNR and SSIM for different values of 
					$n$ based on multivariate Kantorovich-type  NN operators.}
				\label{t11}
				\vspace{0.5em}
				\rowcolors{2}{gray!10}{white}
				\begin{tabular}{>{\bfseries}c c c c c c}
					\toprule
					\rowcolor{gray!25}
					$n$ &  50 & 100 & 150 & 200 & 250 \\
					\midrule
					PSNR (dB)  & 22.64 & 25.26 & 27.02 & 27.94 & 29.36 \\
					SSIM       & 0.7625 & 0.8505 & 0.8969 & 0.9289 & 0.9466 \\
					\bottomrule
				\end{tabular}
			\end{table}

			\begin{algorithm}[H]
				\caption{Image reconstruction using multivariate Kantorovich-type  NN Operators  activated by $\rho_h$}
				\label{alg2}
				\begin{flushleft}
					\textbf{function} \texttt{Kantorovich-Reconstruct}$(n, \rho(x), I)$\\
					\textbf{Input:} Parameter $n$, sigmoidal function $\rho(x)$, grayscale image $I$\\
					\textbf{Output:} Reconstructed image $\tilde{I}$
				\end{flushleft}
				\begin{align*}
					&\text{1. Normalize input image } I \in [0,1] \text{ and convert to grayscale if needed} \\
					&\text{2. Let } (h_1, h_2) \gets \text{size}(I) \text{ and define normalized coordinates:} \\
					&\quad x = \frac{j - 1}{h_2 - 1},\quad y = \frac{i - 1}{h_1 - 1} \\
					&\text{3. Define kernel: } P(t) = \frac{1}{2} \left( \rho(t + 1) - \rho(t - 1) \right) \\
					&\text{4. Define continuous image access: } \\
					&\quad F(x, y) = I[\min(\lfloor y(h_1 - 1) \rfloor + 1,\ h_1),\ \min(\lfloor x(h_2 - 1) \rfloor + 1,\ h_2)] \\
					&\text{5. For each } (x, y) \in [0,1]^2 \text{ on the image domain:} \\
					&\quad \tilde{I}(x,y) = \frac{ \sum\limits_{k_1, k_2} K_n(F)(k_1, k_2) \cdot P(n x - k_1) \cdot P(n y - k_2) }{ \sum\limits_{k_1, k_2} P(n x - k_1) \cdot P(n y - k_2) } \\
					&\quad \text{where } K_n(F)(k_1, k_2) = \text{Kantorovich-type average of } F \text{ over a small neighborhood} \\
					&\text{6. Normalize } \tilde{I} \text{ to } [0,1] \text{ for display} \\
					&\text{7. Compute error metrics: PSNR}(I, \tilde{I}),\ \text{SSIM}(I, \tilde{I})
				\end{align*}
			\end{algorithm}

					\subsubsection{Image inpainting}
					
%					Image inpainting is a critical process in image processing to fill up a missing or damaged portion of the image and make it a visual and structural part of the image. Technically, taking the given image \( f: \Omega \to \mathbb{R} \) defined on domain \( \Omega \subset \mathbb{R}^2 \) and mask \( M \subset \Omega \) specifying occluded areas, inpainting generates the resulting image \( \tilde{f} \) such that visually it agrees with the unoccluded areas \( \Omega \setminus M \) whereas coherent filling occurs in the masked areas \cite{Bertalmio2000}.Inpainting recovers damaged or missing image regions (e.g., scratched photos, corrupted data) using surrounding pixel information. It has key applications in media editing (e.g., Photoshop) and cultural heritage restoration.
%
In this subsection, we discuss the application of the operator~\eqref{multi} in image inpainting, which is an essential technique in image processing that aims to restore missing or damaged regions of an image, ensuring both visual and structural consistency.
% Initially, it was primarily used for restoring old or degraded images by removing scratches and repairing deteriorated areas.
 Image inpainting is widely used in many applications to remove unwanted objects by filling in corrupted or missing regions with estimated pixel values.
  This method is capable of addressing various types of distortions, including text overlays, noise, block artifacts, scratches, lines, and different types of masks.

\clearpage
To evaluate the performance of our inpainting approach, we consider the gray-scale image \textit{Bird 2}, as shown in Figure~\ref{bird2}. We simulate missing data by introducing artificial masks randomly across the image, resulting in approximately 21\% of the pixels being removed, as illustrated in Figure~\ref{Masked image}. 
For each masked pixel at location $(x_1, x_2)$, Algorithm \ref{alg3} is employed to estimate its intensity by utilizing the intensities of neighboring unmasked pixels.
Since the original pixel values are available, we quantitatively assess the effectiveness of the inpainting using standard image quality metrics—PSNR and SSIM—by comparing the inpainted image and the original image (see Table \ref{t22}).

\begin{table}[h!]
	\centering
	\renewcommand{\arraystretch}{1.2}
	\setlength{\tabcolsep}{10pt}
	\caption{Evaluation of image inpainting  quality using PSNR and SSIM for different values of 
		$n$ based on multivariate Kantorovich-type  NN operators.}
	\label{t22}
	\vspace{0.5em}
	\rowcolors{2}{gray!10}{white}
	\begin{tabular}{>{\bfseries}c c c c c}
		\toprule
		\rowcolor{gray!25}
		$n$ & 10 & 50  & 100 & 150  \\
		\midrule
		PSNR (dB) & 27.72  & 28.72 & 30.05 & 31.21  \\
		SSIM      & 0.8767 & 0.9118 & 0.9380 & 0.9552 \\
		\bottomrule
	\end{tabular}
\end{table}

\begin{algorithm}[H]
	\caption{Image inpainting using multivariate Kantorovich-type  NN Operators  activated by $\rho_l$}
	\label{alg3}
	\begin{flushleft}
		\textbf{function} \texttt{Image-Kantorovich-Inpaint}$(n, m, \rho(x), I, a_1, b_1, a_2, b_2)$\\
		\textbf{Input:} Parameter $n$,  $m$, sigmoidal function $\rho(x)$, grayscale image $I$, $a_1, b_1, a_2, b_2 \in [0,1]$\\
		\textbf{Output:} Inpainted image $\tilde{I}$
	\end{flushleft}
	\begin{align*}
		&\text{1. Normalize input image } I \in [0,1] \text{ and convert to grayscale if needed} \\
		&\text{2. Define image size: } (H, W) \gets \text{size}(I) \\
		&\text{3. Generate binary mask } M \text{ with percentage } p \text{ of missing pixels} \\
		&\text{4. Create masked image: } I_{\text{masked}}(i,j) =
		\begin{cases}
			I(i,j), & \text{if } M(i,j) = 1 \\
			\text{NaN}, & \text{otherwise}
		\end{cases} \\
		&\text{5. Define kernel: } P(x) = \frac{1}{2}(\rho(x+1) - \rho(x-1)) \\
		&\text{6. Define access function: } F(x,y) = \texttt{get\_image\_value}(I_{\text{masked}}, x, y) \\
		&\text{7. For each pixel } (i,j) \in [1, H] \times [1, W]: \\
		&\quad \text{If } I_{\text{masked}}(i,j) = \text{NaN:} \\
		&\quad \quad x_1 = \frac{j - 1}{W - 1},\quad x_2 = \frac{i - 1}{H - 1} \\
		&\quad \quad \tilde{I}(i,j) = \frac{
			\sum\limits_{k_1,k_2} A_{k_1,k_2} \cdot P(n x_1 - k_1) \cdot P(n x_2 - k_2)
		}{
			\sum\limits_{k_1,k_2} P(n x_1 - k_1) \cdot P(n x_2 - k_2)
		} \\
		&\quad \quad \text{where } A_{k_1,k_2} = n^2 \cdot \sum\limits_{p,q=1}^m F\left(\frac{k_1}{n} + \frac{p{-}0.5}{n m},\ \frac{k_2}{n} + \frac{q{-}0.5}{n m} \right) \cdot \frac{1}{m^2} \\
		&\quad \text{Else: } \tilde{I}(i,j) = I_{\text{masked}}(i,j) \\
		&\text{8. Normalize } \tilde{I} \text{ to } [0,1] \text{ for display}
	\end{align*}
\end{algorithm}

\begin{figure}[h!]
	\centering
	
	% First row
	\begin{subfigure}[c]{0.35\textwidth}
		\centering
		\includegraphics[width=\linewidth]{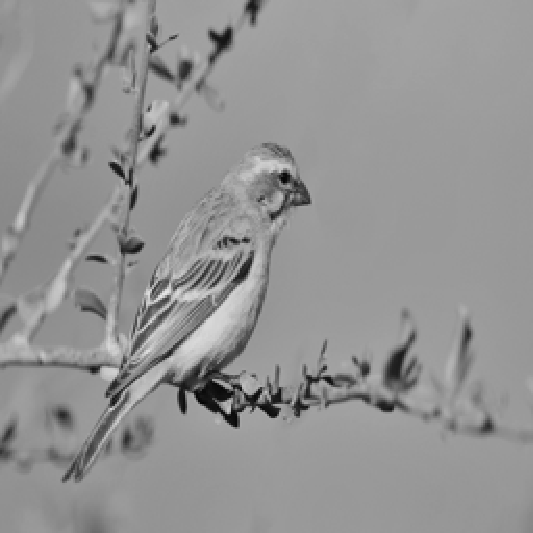}
		\vspace{-1em}
		\caption{Original image $128 \times 128$\\\makebox[\linewidth]{(Bird 2)}}
		\label{bird2}
	\end{subfigure}
	\hfill
	\begin{subfigure}[c]{0.35\textwidth}
		\centering
		\includegraphics[width=\linewidth]{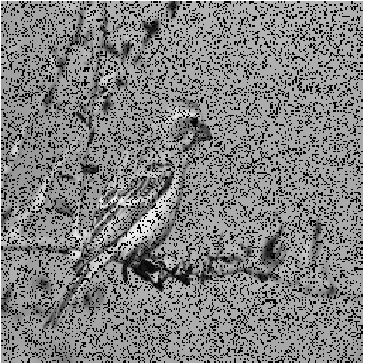}
		\vspace{-1em}
		%		\caption{Reconstructed image with\\ $n=50$. PSNR=}
		\captionsetup{justification=centering, font=small, skip=2pt}
		\caption{Masked image (21\% missing pixels )}
		\label{Masked image}
	\end{subfigure}
	
	\vspace{2mm}
	
	% Second row
	\begin{subfigure}[c]{0.35\textwidth}
		\centering
		\includegraphics[width=\linewidth]{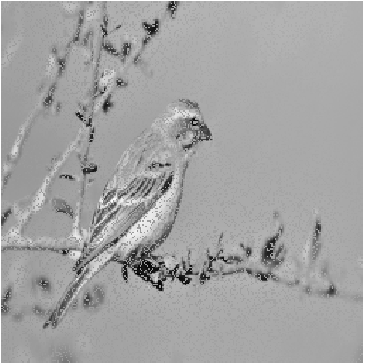}
		\vspace{-1em}
		\captionsetup{justification=centering, font=small, skip=2pt}
		\caption{ Inpainting image with $n=10$ PSNR=27.72, SSIM=0.8767}
	\end{subfigure}
	\hfill
	\begin{subfigure}[c]{0.35\textwidth}
		\centering
		\includegraphics[width=\linewidth]{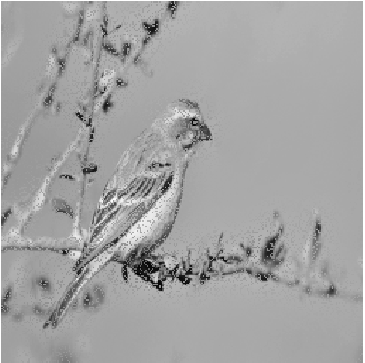}
		\vspace{-1em}
		\captionsetup{justification=centering, font=small, skip=2pt}
		\caption{Inpainting image with $n=50$ PSNR=28.72, SSIM=0.9118}
	\end{subfigure}
	
	\vspace{2mm}
	
	% Third row
	\begin{subfigure}[c]{0.35\textwidth}
		\centering
		\includegraphics[width=\linewidth]{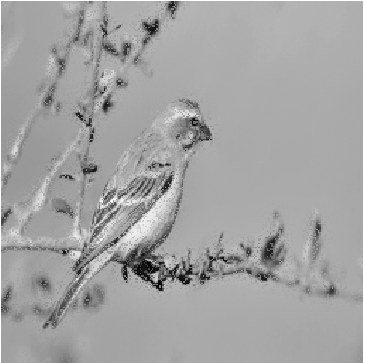}
		\vspace{-1em}
		\captionsetup{justification=centering, font=small, skip=2pt}
		\caption{Inpainting image with $n=100$ PSNR=30.05, SSIM=0.9380}
	\end{subfigure}
	\hfill
	\begin{subfigure}[c]{0.35\textwidth}
		\centering
		\includegraphics[width=\linewidth]{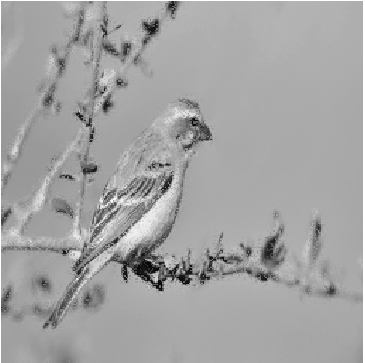}
		\vspace{-1em}
		\captionsetup{justification=centering, font=small, skip=2pt}
		\caption{Inpainting image with  $n=150$ PSNR=31.21, SSIM=0.9552}
	\end{subfigure}
	
	\caption{Image inpainting using multivariate Kantorovich-type  NN operator with $\rho_l$ for different values of $n$.}
	\label{g6}
\end{figure}

					\subsubsection{Image scaling}
%					To demonstrate the practical applicability of the proposed theoretical framework, we focus on the task of image scaling. Specifically, we perform image downscaling by a factor of 1/4 and subsequently apply an upscaling process with a magnification factor of 2. The quality of the reconstructed (upscaled) images is quantitatively assessed using standard image quality metrics, namely Peak Signal-to-Noise Ratio (PSNR) and Structural Similarity Index Measure (SSIM). For the upscaling operation, we employ the proposed two-dimensional fractional exponential sampling operator with parameter $n.$

						%%%%$$%%%%%%%%%%
					Here, we aim to study the image scaling capabilities of the operator defined in~(\ref{multi}).
					%							To demonstrate the practical applicability of the proposed theoretical framework, we focus on the task of image scaling.
							Image scaling is a fundamental tool in image processing used to resize images either by enlarging (upscaling) or reducing (downscaling) their dimensions. It plays a crucial role in various applications such as image enhancement, compression, and multi-resolution analysis. Traditionally, scaling was applied to adapt images to different display resolutions or storage formats.
							
								In this experiment, we first downsample the grayscale image by a factor of \( \frac{1}{2} \), and then apply an upscaling procedure using the Kantorovich-type NN operator with a magnification factor of 2.
							The quality of the reconstructed (upscaled) images is quantitatively assessed using standard image quality metrics, namely PSNR and SSIM (see Table \ref{t5}). A detailed procedure for grayscale image scaling using multivariate Kantorovich-type neural network operators is provided in Algorithm~\ref{alg4}.
						
%						 Today, advanced scaling techniques aim to preserve structural details and visual quality, especially during upscaling. Modern approaches estimate missing pixel values to reconstruct high-resolution images from lower-resolution inputs, while minimizing distortion and artifacts.
						
							\begin{figure}[h!]
							\centering
							
							% Original image at the top
							\begin{minipage}{0.5\textwidth}
								\centering
								\includegraphics[width=\linewidth]{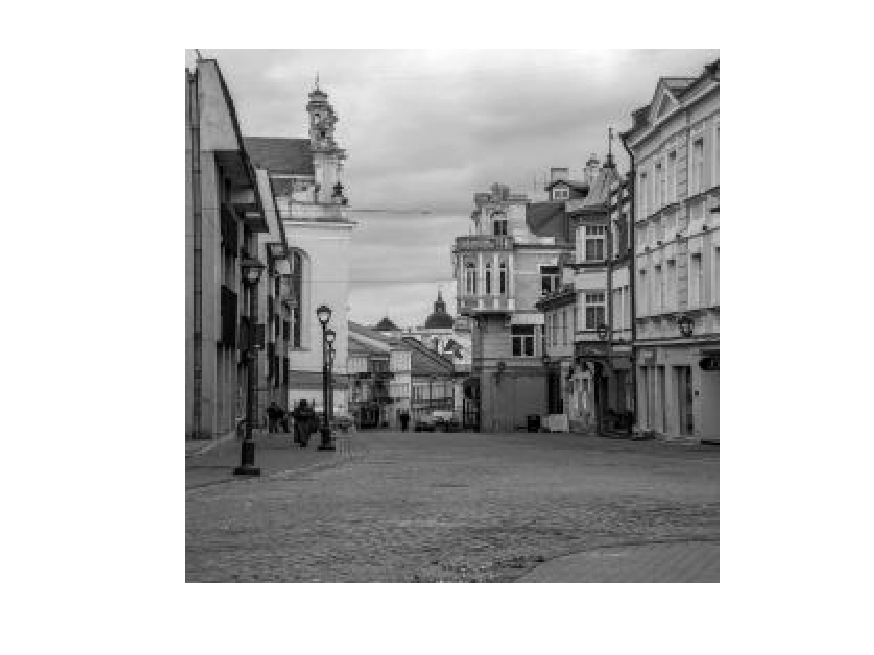}
								\vspace{-2.5em}
								\caption*{Original Image}
							\end{minipage}
							
							\vspace{4mm}  % Space between original and comparisons
							
							% First comparison row
							\begin{minipage}{0.49\textwidth}
								\centering
								\includegraphics[width=\linewidth]{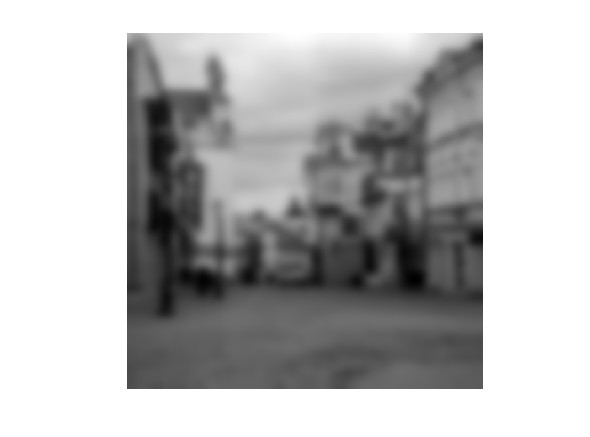}
								\vspace{-2.5em}
								\caption*{Upscaled image for $n=80$}
							\end{minipage}
							\hfill
							\begin{minipage}{0.49\textwidth}
								\centering
								\includegraphics[width=\linewidth]{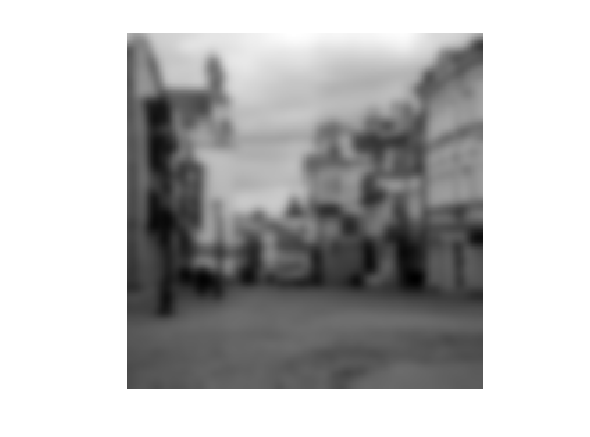}
								\vspace{-2.5em}
								\caption*{Downscaled image for $n=80$ }
							\end{minipage}
							
							\vspace{3mm}
							
							% Second comparison row
							\begin{minipage}{0.49\textwidth}
								\centering
								\includegraphics[width=\linewidth]{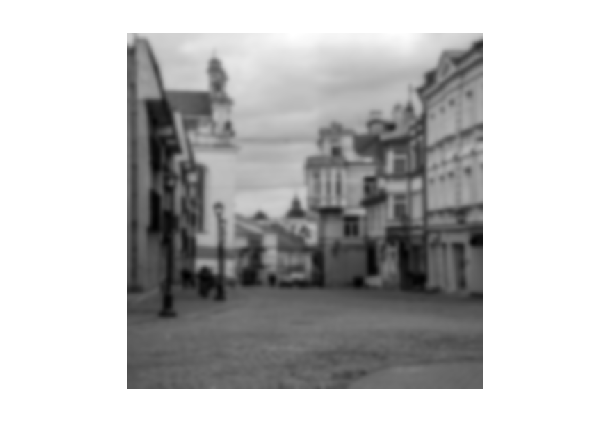}
								\vspace{-2.5em}
								\caption*{Upscaled image for $n=180$}
							\end{minipage}
							\hfill
							\begin{minipage}{0.49\textwidth}
								\centering
								\includegraphics[width=\linewidth]{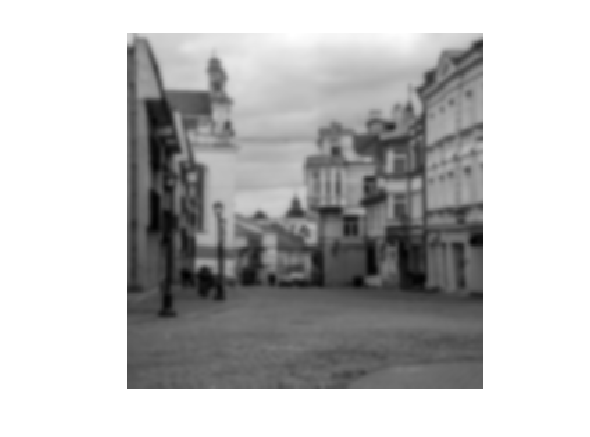}
								\vspace{-2.5em}
								\caption*{Downscaled image for $n=180$}
							\end{minipage}
							
							\vspace{2mm}
							\caption{Image scaling using multivariate Kantorovich-type NN operator with $\rho_h$ for different values of $n$.}
							\label{g7}
						\end{figure}

				\clearpage
			
					\begin{algorithm}[H]
						\caption{Image scaling using multivariate Kantorovich-type NN Operators  activated by $\rho_h$}
						\label{alg4}
						\begin{flushleft}
							\textbf{function} \texttt{Image-Kantorovich-Reconstruct}$(n, m, \rho(x), I)$\\
							\textbf{Input:} Parameters $n$, $m$, sigmoidal function $\rho(x)$, grayscale image $I$\\
							\textbf{Output:} Upsampled image $\tilde{I}$, downsampled image $D$
						\end{flushleft}
						\begin{align*}
							&\text{1. Normalize input image } I \in [0,1] \text{ and convert to grayscale if needed} \\
							&\text{2. Define image size: } (h_1, h_2) \gets \text{size}(I),\quad S \gets \text{upsampling factor} \\
							&\text{3. Define kernel: } P(x) = \frac{1}{2}(\rho(x+1) - \rho(x-1)) \\
							&\text{4. Define continuous access: } F(x,y) = I[\text{round}(y \cdot (h_1{-}1)) + 1,\ \text{round}(x \cdot (h_2{-}1)) + 1] \\
							&\text{5. For each pixel } (x, y) \in [0,1]^2 \text{ on the upsampled grid:} \\
							&\quad \tilde{I}(x,y) = \frac{ \sum\limits_{k_1,k_2} A_{k_1,k_2} \cdot P(n x - k_1) \cdot P(n y - k_2) }{ \sum\limits_{k_1,k_2} P(n x - k_1) \cdot P(n y - k_2) } \\
							&\quad \text{where } A_{k_1,k_2} = n^2 \cdot \sum\limits_{p,q=1}^m F\left(\frac{k_1}{n} + \frac{p{-}0.5}{n m},\ \frac{k_2}{n} + \frac{q{-}0.5}{n m} \right) \cdot \frac{1}{m^2} \\
							&\text{6. Normalize } \tilde{I} \text{ to } [0,1] \text{ for display} \\
							&\text{7. Downsample image: } D = \tilde{I}[S{:}S{:}end,\ S{:}S{:}end] \\
							&\text{8. Compute error metrics: PSNR}(I, D),\ \text{SSIM}(I, D)
						\end{align*}
					\end{algorithm}
					
						\begin{table}[h!]
						\centering
						\renewcommand{\arraystretch}{1.2}
						\setlength{\tabcolsep}{10pt}
						\caption{Evaluation of image scaling quality using PSNR and SSIM for different values of 
							$n$ based on multivariate Kantorovich-type  NN operators.}
						\label{t5}
						\vspace{0.5em}
						\rowcolors{2}{gray!10}{white}
						\begin{tabular}{>{\bfseries}c c c c c c c}
							\toprule
							\rowcolor{gray!25}
							$n$ & 20 & 50 & 80 & 110 & 150 & 180 \\
							\midrule
							PSNR (dB) & 19.04 & 20.53 & 21.46 & 22.24 & 23.06 & 23.56 \\
							SSIM      & 0.4180 & 0.4471 & 0.4907 & 0.5419 & 0.6012 & 0.6396 \\
							\bottomrule
						\end{tabular}
					\end{table}

					\subsubsection{Image denoising}
					We now demonstrate the image denoising performance of the operator~\eqref{multi}.
					In this process, we aim to recover the underlying clean image by effectively removing noise from a corrupted observation. A major challenge arises from the fact that noise, edges, and fine textures all reside in the high-frequency domain, making them difficult to separate during the denoising process. Noise in images can negatively affect later processing tasks such as video analysis, image recognition, and tracking. Therefore, removing noise, or image denoising, is an important step in modern image processing to improve image quality and support further analysis.
					Speckle noise is a type of grainy distortion commonly seen in medical and radar images. It is caused by the interference of reflected signals and reduces image clarity by masking fine details. Removing speckle noise is important for improving image quality and accurate analysis.  It is evident that the increasing value of $n$ reduces the impact of noise in the image (see Table \ref{t33}).
					Algorithm~\ref{alg5} outlines the step-by-step process for grayscale image denoising based on multivariate Kantorovich-type neural network operators.
					
%					 As a result, conventional denoising methods often lead to the loss of important structural details and fine textures. Nonetheless, accurately restoring meaningful content while suppressing noise remains a critical and ongoing problem in the field of image processing, with significant implications for enhancing the quality and reliability of visual data.
					%%%%%%%%%%%%%%%%%%%%%%%

						\begin{figure}[h!]
						\centering
						
						% First row
						\begin{subfigure}[c]{0.35\textwidth}
							\centering
							\includegraphics[width=\linewidth]{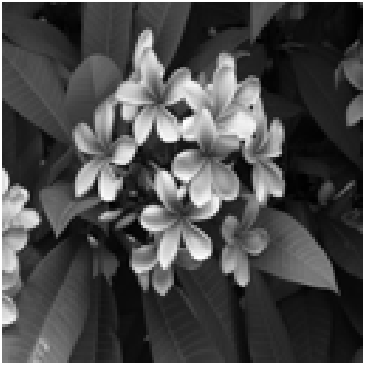}
							\vspace{-1em}
							\caption{Original image }
						\end{subfigure}
						\hfill
						\begin{subfigure}[c]{0.35\textwidth}
							\centering
							\includegraphics[width=\linewidth]{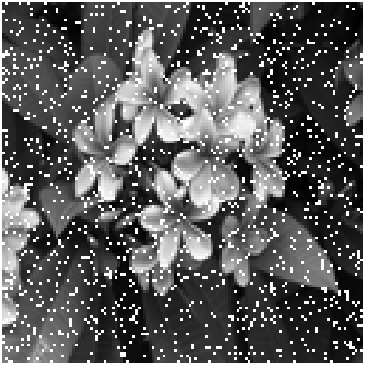}
							\vspace{-1em}
							%		\caption{Reconstructed image with\\ $n=50$. PSNR=}
							\captionsetup{justification=centering, font=small, skip=2pt}
							\caption{Noisy image (Sparkle noise)}
						\end{subfigure}
						
						\vspace{2mm}
						
						% Second row
						\begin{subfigure}[c]{0.35\textwidth}
							\centering
							\includegraphics[width=\linewidth]{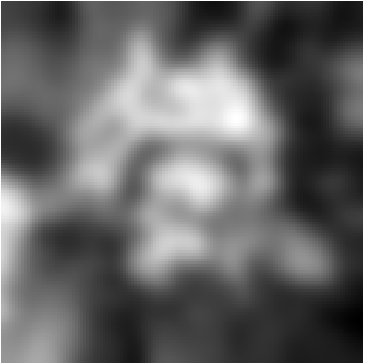}
							\vspace{-1em}
							\captionsetup{justification=centering, font=small, skip=2pt}
							\caption{Denoised image with $n=50$ PSNR=16.03, SSIM=0.3646}
						\end{subfigure}
						\hfill
						\begin{subfigure}[c]{0.35\textwidth}
							\centering
							\includegraphics[width=\linewidth]{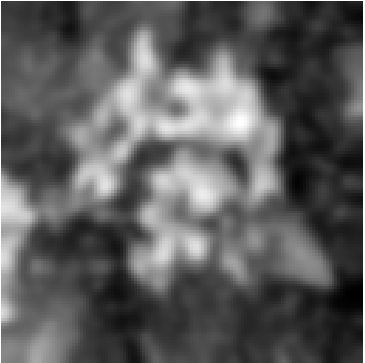}
							\vspace{-1em}
							\captionsetup{justification=centering, font=small, skip=2pt}
							\caption{Denoised image with $n=100$ PSNR=17.78, SSIM=0.4927}
						\end{subfigure}
						
						\vspace{2mm}
						
						% Third row
						\begin{subfigure}[c]{0.35\textwidth}
							\centering
							\includegraphics[width=\linewidth]{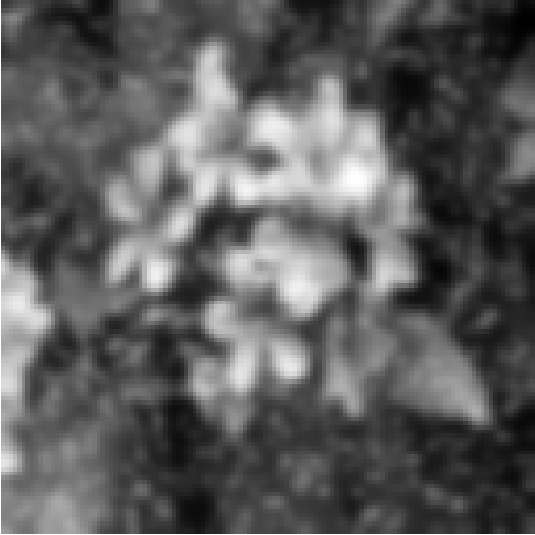}
							\vspace{-1em}
							\captionsetup{justification=centering, font=small, skip=2pt}
							\caption{Denoised image with $n=150$ PSNR=18.73, SSIM=0.5347}
						\end{subfigure}
						\hfill
						\begin{subfigure}[c]{0.35\textwidth}
							\centering
							\includegraphics[width=\linewidth]{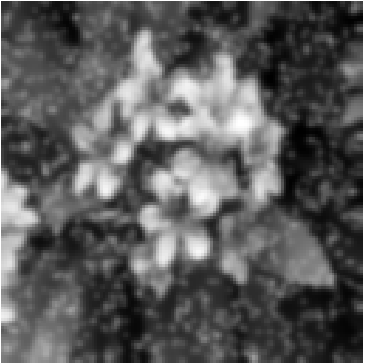}
							\vspace{-1em}
							\captionsetup{justification=centering, font=small, skip=2pt}
							\caption{Denoised image with  $n=200$ PSNR=19.03, SSIM=0.5584}
						\end{subfigure}
						
						\caption{Image denoising using multivariate Kantorovich-type NN operator with $\rho_l$ for different values of $n$.}
						\label{g8}
					\end{figure}

						\begin{table}[h!]
						\centering
						\renewcommand{\arraystretch}{1.2}
						\setlength{\tabcolsep}{10pt}
						\caption{Evaluation of image denoising quality using PSNR and SSIM for different values of 
							$n$ based on multivariate Kantorovich-type  NN operators.}
						\label{t33}
						\vspace{0.5em}
						\rowcolors{2}{gray!10}{white}
						\begin{tabular}{>{\bfseries}c c c c c}
							\toprule
							\rowcolor{gray!25}
							$n$ &  50 &  100 & 150 & 200 \\
							\midrule
							PSNR (dB) &  16.03 & 17.78 & 18.73 & 19.03 \\
							SSIM      &   0.3646 & 0.4927 & 0.5347 & 0.5584 \\
							\bottomrule
						\end{tabular}
					\end{table}

					\clearpage

					\begin{algorithm}[H]
						\caption{Image denoising using multivariate Kantorovich-type   NN Operators activated by $\rho_l$}
						\label{alg5}
						\begin{flushleft}
							\textbf{function} \texttt{Kantorovich-Denoise}$(n, m, \rho(x), I)$\\
							\textbf{Input:} Parameters $n$, $m$, sigmoidal function $\rho(x)$, grayscale image $I$\\
							\textbf{Output:} Denoised image $\tilde{I}$
						\end{flushleft}
						\begin{align*}
							&\text{1. Resize and normalize image } I \text{ to } [0,1] \text{ and size } 128 \times 128 \\
							&\text{2. Add sparkle noise (white impulse noise) with density } d \text{ to image} \\
							&\text{3. Define kernel: } P(x) = \frac{1}{2} \left( \rho(x + 1) - \rho(x - 1) \right) \\
							&\text{4. Define continuous access from noisy image:} \\
							&\quad F(x, y) = I_{\text{noisy}}\left[\min(\lfloor y(h_1 - 1) \rfloor + 1,\ h_1),\ \min(\lfloor x(h_2 - 1) \rfloor + 1,\ h_2)\right] \\
							&\text{5. For each pixel } (x, y) \in [0,1]^2 \text{ on the image grid:} \\
							&\quad \tilde{I}(x,y) = \frac{ \sum\limits_{k_1, k_2} A_{k_1,k_2} \cdot P(n x - k_1) \cdot P(n y - k_2) }{ \sum\limits_{k_1, k_2} P(n x - k_1) \cdot P(n y - k_2) } \\
							&\quad \text{where } A_{k_1,k_2} = n^2 \cdot \sum\limits_{p,q=1}^{m} F\left(\frac{k_1}{n} + \frac{p - 0.5}{n m},\ \frac{k_2}{n} + \frac{q - 0.5}{n m} \right) \cdot \frac{1}{m^2} \\
							&\text{6. Normalize } \tilde{I} \text{ to } [0,1] \text{ for visualization} \\
							&\text{7. Compute metrics: PSNR}(I, \tilde{I}),\ \text{SSIM}(I, \tilde{I})
						\end{align*}
					\end{algorithm}

						\subsection{Applications of Mixed Norm Structures}\label{appmix}

					Mixed norm spaces $L^{(p_1,p_2)}$ generalize the traditional \( L^{p} \) spaces and are particularly effective for analyzing multidimensional data such as signals and images. Unlike standard norms, which treat data uniformly, mixed norm apply different norms along different dimensions, enabling anisotropic analysis.
					To demonstrate this, we compare the standard \( L^p \) norm with a mixed \( L^{(p_1, p_2)} \) norm in quantifying reconstruction error for a synthetic 2D signal.
					%			This property is especially useful in applications where structural variations differ across dimensions, such as in 2D signal processing.

					%			Mixed norms have broad applications in signal processing, machine learning, and mathematical modeling, offering greater flexibility and sensitivity in capturing directional features and structured errors.

					\begin{figure}[h!]
					\centering
					\begin{subfigure}{0.3\textwidth}
						\includegraphics[width=\linewidth]{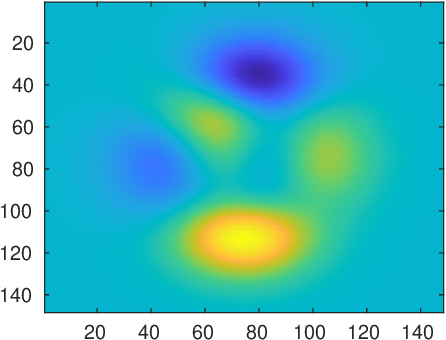}
						\captionsetup{justification=centering, font=small, skip=2pt}
						\caption{Clean signal (Original)}
						%					\vspace{-1em}
					\end{subfigure}
					\hfill
					\begin{subfigure}{0.3\textwidth}
						\includegraphics[width=\linewidth]{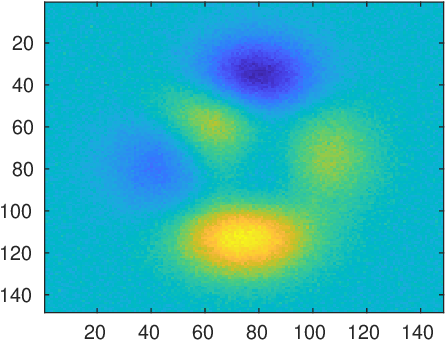}
						\captionsetup{justification=centering, font=small, skip=2pt}
						\caption{Noisy signal  }
					\end{subfigure}
					\hfill
					\begin{subfigure}{0.3\textwidth}
						\includegraphics[width=\linewidth]{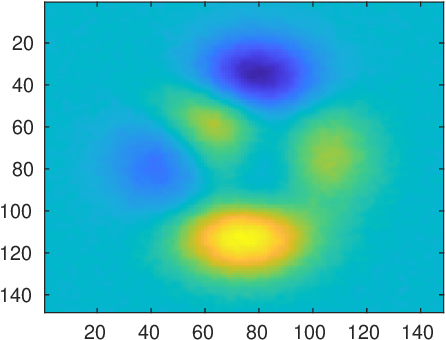}
						\captionsetup{justification=centering, font=small, skip=2pt}
						\caption{Reconstructed signal}
					\end{subfigure}
					\caption{Comparison of clean, noisy, and reconstructed (denoised) 2D signal}
					\label{m11}
				\end{figure}

					The steps followed in this experiment are listed below: 
					
					\begin{enumerate}
						\item A synthetic 2D signal is generated using \texttt{peaks} function with grid size \(148 \times 148\), which produces a smooth surface with multiple peaks and valleys.
						
						\item Gaussian noise with zero mean and standard deviation \(0.3\) is added element-wise to simulate measurement.
						
						\item The noisy signal is denoised using a Gaussian low-pass filter with standard deviation \(\sigma = 1\).
						
						\item Reconstruction errors are computed in two ways: standard $L^{(p_1,p_1)} $ and mixed $L^{(p_1,p_2)}$ norm.
						%				\begin{itemize}
							%					\item The standard \(L^p\) norm of the vectorized error signal.
							%					\item The mixed \(L^{p_1,p_2}\) norm, where row-wise \(L^{p_1}\) norms are computed first, followed by an \(L^{p_2}\) norm across those row norms.
							%				\end{itemize}
						
						%				\item For a set of multiple \((p, p_1, p_2)\) tuples, corresponding \(L^p\) and \(L^{p_1,p_2}\) errors are computed and displayed in tabular format for comparative analysis.

					\end{enumerate}

					\begin{table}[h!]
						\centering
						\renewcommand{\arraystretch}{1.7}
						\setlength{\tabcolsep}{15pt}
						\caption{Comparison of $L^{(p_1,p_1)}$ and mixed $L^{(p_1,p_2)}$-error for denoised 2D signal}
						\label{m1}
						\vspace{0.7em}
						\rowcolors{2}{gray!10}{white}
						\begin{tabular}{>{\bfseries}c c c c c c}
							\toprule
							\rowcolor{gray!25}
							$p_1$ & {$L^{(p_1,p_1)}$-error} & $(p_1,p_2)$ & {$L^{(p_1,p_2)}$-error } & $(p_1,p_2)$ & {$L^{(p_1,p_2)}$-error } \\
							\midrule
							%						1 & 1553.07961 & (1,2) & 128.53548 & (1,2) & 127.73890 \\
							2 & 13.12861   & (2,3) & 5.74018& (2,4) & 3.81862     \\
							3 & 2.89823   & (3,4) & 1.92922 & (3,5) & 1.51235    \\
							4 & 1.41900     & (4,5) & 1.11907 & (4,6) & 0.95241   \\
							5 & 0.94815   & (5,6) & 0.81432  & (5,7) & 0.72688    \\
							6 & 0.73864   &  (6,7) &  0.66584 & (6,8)  & 0.61599  \\
							7 &  0.62975  & (7,8) & 0.58520  & (7,9) & 0.55145\\
							8 &  0.56583 &  (8,9) &  0.53762  & (8,10) &  0.51063 \\
							\bottomrule
						\end{tabular}
					\end{table}
					
					Figure~\ref{m1} shows the reconstruction of a 2D signal corrupted by Gaussian noise. The reconstruction quality is measured using the standard 
					$L^{p_1}$
					norm and the mixed $L^{(p_1,p_2)}$
					norm (see Table \ref{m1}).
					%				The computed errors are: \( L^2 \) norm = 13.097685 and \( L^{2,4} \) norm = 3.789831.
					
					%							In this experiment, a grayscale image of size \(256 \times 256\) was corrupted with Gaussian noise to simulate a reconstructed version. The reconstruction error was evaluated using two norms: the standard \(L^2\) norm and a mixed \(L^{3,5}\) norm. While the \(L^2\) norm measures global error uniformly across all pixels, the mixed norm captures directional and structural variations by applying different norms along rows and columns. The smaller value of the mixed norm indicates its better sensitivity to local discrepancies in the image. 
					%							The reconstruction yielded a standard \(L^2\) norm error of 8.115811, while the mixed \(L^{3,5}\) norm error was significantly lower at 0.715125, highlighting the mixed norm's effectiveness in capturing structural discrepancies.
					
					Next, we apply the same procedure to image data, similar to the 2D signal case described above. The key steps are outlined below:
					
					\begin{enumerate}
						\item A grayscale image is loaded and resized to \(256 \times 256\), then normalized to [0, 1].
						\item Salt and pepper noise with density 0.05 is added.
						\item The noisy image is denoised using a median filter.
						\item Reconstruction error is computed using: standard $L^{(p_1,p_1)} $ and mixed $L^{(p_1,p_2)}$ norm.
						%	\begin{itemize}
							%		\item Standard \(L^2\) norm
							%		\item Mixed \(L^{3,5}\) norm (row-wise \(L^3\), then global \(L^5\))
							%	\end{itemize}
					\end{enumerate}

					\begin{figure}[h!]
						\centering
						\begin{subfigure}{0.3\textwidth}
							\includegraphics[width=\linewidth]{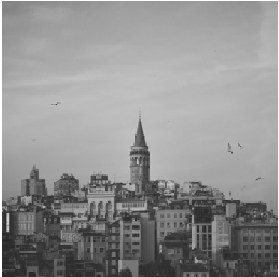}
							\captionsetup{justification=centering, font=small, skip=2pt}
							\caption{Clean  image (Original)}
							%					\vspace{-1em}
						\end{subfigure}
						\hfill
						\begin{subfigure}{0.3\textwidth}
							\includegraphics[width=\linewidth]{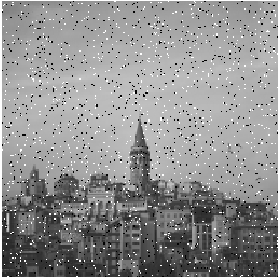}
							\captionsetup{justification=centering, font=small, skip=2pt}
							\caption{Noisy image}
						\end{subfigure}
						\hfill
						\begin{subfigure}{0.3\textwidth}
							\includegraphics[width=\linewidth]{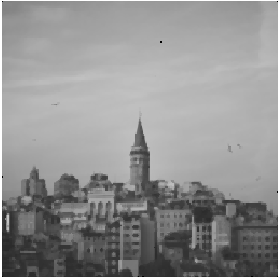}
							\captionsetup{justification=centering, font=small, skip=2pt}
							%					\vspace{0.01pt}
							\caption{Reconstructed image}
						\end{subfigure}
						\caption{Comparison of clean, noisy, and reconstructed (denoised) image.}
						\label{m22}
					\end{figure}

					\begin{table}[h!]
						\centering
						\renewcommand{\arraystretch}{1.7}
						\setlength{\tabcolsep}{15pt}
						\caption{Comparison of $L^{(p_1,p_1)}$ and mixed $L^{(p_1,p_2)}$-error for denoised image}
						\label{m2}
						\vspace{0.7em}
						\rowcolors{2}{gray!10}{white}
						\begin{tabular}{>{\bfseries}c c c c c c}
							\toprule
							\rowcolor{gray!25}
							$p_1$ & {$L^{(p_1,p_1)}$-error} & $(p_1,p_2)$ & {$L^{(p_1,p_2)}$-error } & $(p_1,p_2)$ & {$L^{(p_1,p_2)}$-error } \\
							\midrule
							%						1 & 1553.07961 & (1,2) & 128.53548 & (1,2) & 127.73890 \\
							2 & 6.846742    & (2,3) & 3.165071 & (2,4) & 2.185745     \\
							3 &  1.720873    & (3,4) &  1.195194 & (3,5) & 0.976082    \\
							4 &  1.100493     & (4,5) & 0.930642 & (4,6) & 0.855222   \\
							5 & 0.954135   & (5,6) & 0.904091  & (5,7) & 0.881201    \\
							6 & 0.853499   &  (6,7) &   0.844664 & (6,8)  & 0.841901  \\
							7 &  0.886418  & (7,8) & 0.854099  & (7,9) &  0.830459\\
							8 &   0.769987  &  (8,9) &   0.761601  & (8,10) & 0.755460 \\
							\bottomrule
						\end{tabular}
					\end{table}
					
					Figure~\ref{m22} illustrates the reconstruction of an image affected by salt and pepper noise. We assess the reconstruction quality using both the standard 	$L^{p_1}$
					norm and the mixed $L^{(p_1,p_2)}$
					norm (see Table \ref{m2}). 
%					 It is evident from Tables \ref{m1}-\ref{m2}  that the mixed-norm error is more effective than the standard norm.
					%				The resulting error values were: \( L^2 \) norm = 6.646659 and \( L^{3,5} \) norm = 1.083135.

					\begin{table}[h!]
						\centering
						\renewcommand{\arraystretch}{1.7}
						\setlength{\tabcolsep}{9pt}
						\caption{Comparison of $L^{(p_1,p_1)}$ and mixed $L^{(p_1,p_2)}$-error for Image reconstruction, Image inpainting, Image scaling and Image denoising  }
						\label{errimg}
						\vspace{0.7em}
						\rowcolors{2}{gray!10}{white}
						\resizebox{0.99\textwidth}{!}{%
							\begin{tabular}{c|cc|cc|cc|cc|}
								\toprule
								\rowcolor{gray!25}
								\multirow{2}{*}{$n$} 
								& \multicolumn{2}{c|}{Image reconstruction Fig \ref{g5}}
								& \multicolumn{2}{c|}{Image inpainting Fig \ref{g6}} 
								& \multicolumn{2}{c|}{Image scaling Fig \ref{g7}} 
								& \multicolumn{2}{c}{Image denoising Fig \ref{g8}} 
								\\
								\cmidrule(lr){2-3} \cmidrule(lr){4-5} \cmidrule(lr){6-7} \cmidrule(lr){8-9}
								\rowcolor{gray!25}
								& \makecell{$L^{(2,2)}$-error} 
								& \makecell{$L^{(2,4)}$-error}
								& \makecell{$L^{(3,3)}$-error} 
								& \makecell{$L^{(3,5)}$-error}
								& \makecell{$L^{(4,4)}$-error} 
								& \makecell{$L^{(4,5)}$-error}
								& \makecell{$L^{(5,5)}$-error} 
								& \makecell{$L^{(5,6)}$-error} 
								\\
								\midrule
								20  & 30.95 & 9.12 & 3.92 & 1.91 &3.11 & 2.41 &1.83 & 1.58 \\
								40  & 25.09 & 7.42   & 3.79 & 1.85 & 2.49 & 1.93 & 1.69 & 1.46 \\
								60  & 19.25 & 5.74 & 3.70 & 1.79 & 2.28 & 1.76 & 1.57 & 1.36 \\
								80  & 16.03  & 4.78   & 3.64  & 1.76 & 2.20 & 1.70 & 1.47& 1.28  \\
								100  & 13.82  & 4.14   & 3.63  & 1.75 & 2.10 & 1.62 & 1.38 & 1.20 \\
								\bottomrule
							\end{tabular}
						}
					\end{table}
				
					In Table~\ref{errimg}, we provide error estimates for the \(L^{(p_1,p_1)}\)-error and the mixed \(L^{(p_1,p_2)}\)-error for randomly chosen values of \(p_1\) and \(p_2\). From these results, we observe that the mixed norm yields a more detailed and directionally sensitive evaluation of the reconstruction error.
						This results indicates that the mixed norm offers a more refined and direction-sensitive evaluation of the reconstruction error.
					
					%				\begin{enumerate}
						%					\item \textbf{Image Preprocessing:} Load the image, resize it to \(256 \times 256\), convert it to grayscale if needed, and normalize pixel values to the range \([0,1]\).
						%					
						%					\item \textbf{Noise Simulation:} Add Gaussian noise to the original image to generate a noisy (reconstructed) version.
						%					
						%					\item \textbf{Error Calculation:} 
						%					\begin{itemize}
							%						\item Compute the standard \(L^2\) norm error over all pixels.
							%						\item Compute the mixed \(L^{3,5}\) norm by applying \(L^3\) norm across rows followed by \(L^5\) norm on the resulting row norms.
							%					\end{itemize}
						%					
						%					\item \textbf{Result Visualization:} Display the original and reconstructed images and save both as EPS figures for inclusion in LaTeX documents.
						%				\end{enumerate}

					%				From Table \ref{t1} and \ref{t2} we can validate the result which we have proved in Theorem \ref{thm 3.3}

					\section{Conclusion}
					Based on the insights from Section \ref{1}, the theory of neural networks has been extensively explored due to its broad significance in both theoretical and applied domains.
					In this paper, we examine the approximation abilities of a family of \textit{multivariate Kantorovich-type neural network operators} as defined in (\ref{multi}). We study these operators within the framework of mixed norm Lebesgue spaces and mixed norm Orlicz spaces. 
					%The theory presented here is devoted  with the approximation of multi-variable functions.
					%As discussed in Section \ref{2}, the Orlicz space $L^{\phi}$ itself represents a wide range of function spaces, including classical Lebesgue spaces, Logarithmic spaces and Exponential spaces, for different choices of $\phi-$function.
					% Hence, this paper not just discuss the \textit{deep} (multi-layered) version of the Durrmeyer type NN operators studied in \cite{DurNN}, but also presents a general framework to study the family of semi-discrete deep NN operators in several function spaces which provides a comprehensive understanding of the approximation behaviour across different functional spaces with varying norm structures.
					% In view of Remark \ref{kant}, since the Kantorovich-type deep NN operators can be derived from the proposed family of operators for a particular choice of kernel, our work significantly generalizes the analysis presented in \cite{sharma} by extending it to the framework of Orlicz spaces, thereby to several function spaces.
					From Section \ref{2}, it is evident that $L^{\phi}$ 
					generates various function spaces depending on the choice of
					$\phi-$functions, including classical Lebesgue spaces, Logarithmic spaces and Exponential spaces. Hence, this paper provides a general framework for analyzing these operators across various function spaces simultaneously, providing a comprehensive understanding of their approximation behavior in different spaces with varying norm structures.
					%First, we discuss the approximation capabilities of multivariate Kantorovich shallow neural network operators and then extend our study to multivariate Kantorovich deep neural network operators.
					Sections \ref{3} and \ref{4} are devoted to present boundedness and convergence results for $(K_{n} f)$ in $L^{\overrightarrow{\mathscr{P}}}(\mathcal{J})$ and $L^{\overrightarrow{\Phi}}(\mathcal{J})$ spaces respectively. 
					To demonstrate the convergence, in Section \ref{5}, we present a few examples with graphical representations (see Figures \ref{g1}-\ref{g4}) and estimates of approximation-error (see Tables \ref{t1}-\ref{or2}) in various function spaces utilizing specific sigmoidal functions, namely the logistic function and the hyperbolic tangent function.
					%	      which demonstrate that the multivariate Kantorovich neural network operators exhibit significant approximation performance. These examplesows 
					%In addition, we present the function approximation by computing the error in the mixed norm Lebesgue space (see Table \ref{t33}).
					In addition we discuss its applications in image processing such as Image reconstruction (Fig \ref{g5}), Image inpainting  (Fig \ref{g6}), Image scaling  (Fig \ref{g7}) and Image denoising  (Fig \ref{g8}). From these figures, we demonstrate the capabilities of the operator, as evidenced by consistently higher SSIM and PSNR values with increasing $n$ (see Tables \ref{t11}-\ref{t33}). 	Subsequently, we provide  Table \ref{errimg} for error-estimation with respect to  $L^p$-error and mixed $L^{(p_1,p_2)}$-error.
						We further demonstrate the utility of the mixed norm structure in signal and image analysis affected by Gaussian and salt-and-pepper noise (see Fig~\ref{m11}-\ref{m22}), and provide an error-estimation  comparing the $L^p$ and mixed $L^{(p_1,p_2)}$	norms (Table \ref{m1}-\ref{m2}).

%					and it is evident from Figures \ref{g5}-\ref{g8} and Table \ref{errimg} that the PSNR and SSIM values get higher as $n$ increases, reflecting its' ability of image reconstruction. 
%					Subsequently, the scaling procedure has been demonstrated in Figure \ref{f4} with significant output as PNSR value of $21.13$ for $n=20.$
					%	It is evident from the visuals of Section \ref{5} that the proposed deep NN operators given by \eqref{eq 3.5} and \eqref{eq 3.10} approximate the target functions effectively and the accuracy increases as number of samples are increased in the process. 

					\section{Information.} 
					
					\subsection{Ethic.}
					\noindent 
				The authors declare that this is an original work, not previously published or submitted for publication elsewhere. 
					
					\subsection{Funding.} 
					\noindent The research of the first author is funded by the University Grants Commission (UGC), New Delhi, India, through NTA Reference No : 231610034215.
					
					\subsection{Acknowledgement}
					\noindent 
					Priyanka Majethiya and Shivam Bajpeyi gratefully acknowledge SVNIT Surat, India, for the facilities and support provided during the course of this research.

					\subsection{Data availability.} 
					\noindent 
					This article does not involve any data; therefore, data sharing is not applicable.
					
					\subsection{Authorship contribution.}
					\noindent
					\textbf{Priyanka Majethiya:} Writing – Original Draft, Writing – Review and Editing, Conceptualization, Formal Analysis, Methodology, Mathematical Proofs, Visualization. 
					\textbf{Shivam Bajpeyi:} Writing – Original Draft, Writing – Review and Editing, Conceptualization, Formal Analysis, Methodology, Mathematical Proofs, Visualization, Supervision.

					\subsection{Conflict of interest.} 
					\noindent 
				The authors declare that there is no conflict of interest regarding the content of this article.


\begin{thebibliography}{99}
						
						\bibitem{Agrawal} P. N. Agrawal, B. Baxhaku, Neural network Kantorovich operators activated by smooth ramp functions, Mathematical Methods in the Applied Sciences 48 (1) (2025) 563-589.
						
						\bibitem{Anas1} G. A. Anastassiou, Rate of convergence of some neural network operators to the unit-univariate case, Journal of Mathematical Analysis and Applications 212(1) (1997) 237–262.
						
						\bibitem{Anas2} G. A. Anastassiou, Univariate hyperbolic tangent neural network approximation, Mathematical and Computational Modelling 53(5–6) (2011) 1111–1132.
						
						\bibitem{Anas3} G. A. Anastassiou, Univariate sigmoidal neural network approximation, Journal of Computational Analysis and Applications 14(4) (2012) 659–690.
						
						
						\bibitem{AAmine} S. Bajpeyi, D. Patel, S. Sivananthan, Relevant sampling in a reproducing kernel subspace of Orlicz space, Analysis and Applications 22(6) (2024) 1075–1094.
						
						\bibitem{ban}R. A. Bandaliyev, A. Serbetci, S. G. Hasanov, On Hardy inequality in variable Lebesgue spaces with mixed norm, Indian Journal of Pure and Applied Mathematics 49 (2018) 765–782.
						
						\bibitem{k2007} C. Bardaro, G. Vinti, P. L. Butzer, R. L. Stens, Kantorovich-type generalized sampling series in the setting of Orlicz spaces, Sampling Theory in Signal and Image Processing 6(1) (2007) 29–52.
						
						
						\bibitem{barron} A.R. Barron, Universal approximation bounds for superpositions of a sigmoidal function, IEEE Transactions on Information theory 39 (3) (1993) 930-945.
						
						
						
						\bibitem{BA1} B. Baxhaku, P. N. Agrawal, Neural network operators with hyperbolic tangent functions, Expert Systems with Applications 226 (2023) (in press).
						
						
							\bibitem{imgpn}	F. Baxhaku, A. Berisha, P.N. Agrawal, and B. Baxhaku, Multivariate neural network operators activated by smooth ramp functions, Expert Syst. Appl., 269 (2025), 126119.
							
							
						\bibitem{dct}A. Benedek, R. Panzone, The space
						$L^p$with mixed norm, Duke Mathematical Journal 28 (1961) 301–324.
						
							\bibitem{siam}	P.L. Butzer and R.L. Stens, Sampling theory for not necessarily band-limited functions: a historical overview, SIAM Rev., 34 (1) (1992), 40–53.
							
						\bibitem{cao11}F. Cao, R. Zhang, The errors of approximation for feed-forward neural networks in the 
						$L^p$ metric, Mathematical and Computer Modelling 49(7–8) (2009) 1563–1572.
						
						
						\bibitem{CE}P. Cardaliaguet, G. Euvrard, Approximation of a function and its derivative with a neural network, Neural Networks 5(2) (1992) 207–220.
						
						
						
						
						
							\bibitem{cao2}Z. Chen and F. Cao, The approximation operators with sigmoidal functions, Comput. Math. Appl., 58 (4) (2009), 758–765.
						
						
						
						
						\bibitem{clean}G. Cleanthous, A. G. Georgiadis, M. Nielsen, Molecular decomposition of anisotropic homogeneous mixed norm spaces with applications to the boundedness of operators, Applied and Computational Harmonic Analysis 47(2) (2019) 447–480.
						
						
						
						
						
						
						\bibitem{NN} D. Costarelli, R. Spigler, Approximation results for neural network operators activated by sigmoidal functions, Neural Networks 44 (2013) 101–106.	
						
						
						\bibitem{spigler}D. Costarelli, R. Spigler, Convergence of a family of neural network operators of the Kantorovich type, Journal of Approximation Theory 185 (2014) 80–90.
						
						\bibitem{2}D. Costarelli, Interpolation by neural network operators activated by ramp functions, Journal of Mathematical Analysis and Applications 419(1) (2014) 574-582.
						
						
						\bibitem{nn}D. Costarelli, G. Vinti, Convergence for a family of neural network operators in Orlicz spaces, Mathematische Nachrichten 290(2–3) (2017) 226–235.
						
						
						
						
%						\bibitem{density}D. Costarelli, Density results by deep neural network operators with integer weights, Mathematical Modelling and Analysis 27(4) (2022) 547–560.
						
						\bibitem{cybenko}G. Cybenko, Approximation by superpositions of a sigmoidal function, Mathematics of Control, Signals and Systems 2(4) (1989) 303–314. https://doi.org/10.1007/BF02551274.
						
						
						
						
						
						\bibitem{exp} D. Edmunds, M. Krbec, Two limiting cases of Sobolev embeddings, Houston Journal of Mathematics 21(1) (1995) 119–128.
						
						\bibitem{grey} N. Evseev, A. Menovschikov, Bounded operators on mixed norm Lebesgue spaces, Complex Anal. Oper. Theory 13 (2019) 2239–2258.
						
						\bibitem{log2} A. Fiorenza, Some remarks on Stein’s 
						LlogL result, Differential and Integral Equations 5(6) (1992) 1355–1362.
						
						%	\bibitem{folland} G. B. Folland, Real Analysis: Modern Techniques and Their Applications, second edition, Wiley-Interscience (1999).
						
						
						\bibitem{Funahashi} K. I. Funahashi, On the approximate realization of continuous mappings by neural networks, Neural Networks (1989) 183–192.
						
%						\bibitem{deepApl1} Z. Guo, X. Li, H. Huang, N. Guo, Q. Li, Deep learning-based image segmentation on multimodal medical imaging, IEEE Transactions on Radiation and Plasma Medical Sciences 3(2) (2019) 162–169.
						
						
%						\bibitem{deepApl2} M. Hatt, C. Parmar, J. Qi, I. E. Naqa, Machine (deep) learning methods for image processing and radiomics, IEEE Transactions on Radiation and Plasma Medical Sciences 3(2) (2019) 104–108.
						
						
%						\bibitem{deepApl3}G. Hinton, L. Deng, D. Yu, G. E. Dahl, A. Mohamed, N. Jaitly, A. Senior, V. Vanhoucke, P. Nguyen, T. N. Sainath, B. Kingsbury, Deep neural networks for acoustic modeling in speech recognition: The shared views of four research groups, IEEE Signal Processing Magazine 29(6) (2012) 82–97.
						
						
						
						
						
						\bibitem{Hornik} K. Hornik, M. Stinchombe, H. White, Multilayer feedforward networlicz are universal approximators,  Neural Networks 2(1989) 359-366.
						
						
						\bibitem{mes}V. E. Ismailov, E. Savas, Measure-theoretic results for approximation by neural networks with limited weights, Numerical Functional Analysis and Optimization 38(7) (2017) 819–830.
						
							\bibitem{jerri} A. J. Jerri, The Shannon sampling theorem—its various extensions and applications: a tutorial review, Proc. IEEE, 65(11) (1977), 1565–1596.
							
						\bibitem{sun} Y. Jiang, W. Sun, Adaptive sampling of time-space signals in a reproducing kernel subspace of mixed Lebesgue space, Banach J. Math. Anal. 14 (2020) 821–841.
						
						
							
						\bibitem{applkadak} U. Kadak, Multivariate fuzzy neural network interpolation operators and applications to image processing, Expert Systems with Applications 206 (2022) 117771.
						
						\bibitem{kadakapl}U. Kadak, Multivariate neural network interpolation operators, Journal of Computational and Applied Mathematics 414 (2022) 13 pp.
						
						\bibitem{kadakfuzzy} U. Kadak, L. Coroianu, Integrating multivariate fuzzy neural networks into fuzzy inference system for enhanced decision making, Fuzzy Sets and Systems 470 (2023) 20 pp.
						
						\bibitem{heat} D. Kim, Elliptic and parabolic equations with measurable coefficients in $L^p$-spaces with mixed norms, Methods Appl. Anal. 15 (4) (2008) 437–467.
						
						\bibitem{top} M.A. Kouritzin, D. Richard, Universal Approximation and the Topological Neural Network, IEEE Access (2023).
						
%						\bibitem{1} R. Liu, B. Boukai, Z. Shang, Optimal nonparametric inference via deep neural network, Journal of Mathematical Analysis and Applications 505(2) (2022) 125561.
						
						
						\bibitem{Leshno} M. Leshno, V.Y. Lin, A. Pinks, S. Schocken, Multilayer feedforward networks with a nonpolynomial activation function can approximate any function, Neural Networks 6 (1993) 861-867.
						
						\bibitem{sainz} B. Llanas, F. Sainz, Constructive approximate interpolation by neural networks, Journal of Computational and Applied Mathematics 188(2) (2006) 283–308.
						
						
						\bibitem{2025}P. Majethiya, S. Bajpeyi, Modular convergence of semi-discrete neural network operators in Orlicz space, Journal of Pseudo-Differential Operators and Applications 16(2) (2025) 27.
						
						\bibitem{3} L. Maligranda, Calderón–Lozanovskii construction for mixed norm spaces, Acta Math. Hungar. 103 (4) (2004) 279–302.
						
						
						
						\bibitem{13} L. Maligranda, Orlicz Spaces and Interpolation, Sem. Mat. 5, Universidade Estadual de Campinas, Departamento de Matemática, Campinas (1989) iii+206 pp.
						
						\bibitem{pq}M. Milman, A note on 
						L(p,q) spaces and Orlicz spaces with mixed norms, Proceedings of the American Mathematical Society 83(4) (1981) 743–746.
						
						
						
%						\bibitem{mhas2}  H. N. Mhaskar, T. Poggio, Deep vs. shallow networks: an approximation theory perspective, Analysis and Applications 14(6) (2016) 829–848.
						
						
						
						
						
						\bibitem{orl}W. Orlicz, Über konjugierte Exponentenfolgen, Studia Mathematica 3(1) (1931) 200–211.
						
						
						\bibitem{pandey}K. K. Pandey, P. Viswanathan, Some elementary properties of Bernstein–Kantorovich operators on mixed norm Lebesgue spaces and their implications in fractal approximation, Nonlinear Analysis 239 (2024) 113416.
						
						\bibitem{pinkus} A. Pinkus, Approximation theory of the MLP model in neural networks, Acta Numerica 8 (1999) 143–195.
						
						
						
						
						\bibitem{qyu}Y. Qian, D. Yu, Rates of approximation by neural network interpolation operators, Applied Mathematics and Computation 418 (2022) 18 pp.
						
							\bibitem{q1}	Y. Qian and D. Yu, Neural network interpolation operators activated by smooth ramp functions, Anal. Appl., 20 (4) (2022), 791–813.
						
						
						
						\bibitem{orliczapp}M. M. Rao, Z. D. Ren, Applications of Orlicz Spaces, Marcel Dekker, New York (2002).
						
						
						\bibitem{Raobook}
						M. M. Rao, Z. D. Ren, Theory of Orlicz Spaces, Marcel Dekker, New York (1991).
						
						
						
%						\bibitem{eldan} E. Ronen, O. Shamir, The power of depth for feed-forward neural networks, Proceedings of Machine Learning Research 49 (2016) 907–940.
						
						
						
						
						\bibitem{mishraNN} T. Ryck, S. Lanthaler, S. Mishra, On the approximation of functions by tanh neural networks, Neural Networks 143 (2021) 732–750.
						
						
%						\bibitem{sharma} M. Sharma, U. Singh, Some density results by deep Kantorovich type neural network operators, Journal of Mathematical Analysis and Applications 533(2) (2024) 11 pp.
						
						
						
%						\bibitem{tel}M. Telgarsky, Benefits of depth in neural networks, Proceedings of Machine Learning Research 49 (2016) 1517–1539.
						
							\bibitem{noise} C.E. Shannon, Communication in the presence of noise, Proc. I.R.E. 37 (1949) 10–21.
							
							\bibitem{vayeda}K. Vayeda, S. Bajpeyi, Approximation by family of max–min sampling operators in function spaces, Comput. Appl. Math. 44 (2025) 356.
							
							
						\bibitem{WYG} G. Wang, D. Yu, L. Guan, Neural network interpolation operators of multivariate functions, Journal of Computational and Applied Mathematics 431 (2023) 22 pp.
						
%						\bibitem{dyu} D. Yu, F. Cao, Approximation by a class of neural network operators on scattered data, Mathematical Methods in the Applied Sciences 45(12) (2022) 7652–7662.
						
						\bibitem{Yu} D. S. Yu, F. L. Cao, Construction and approximation rate for feedforward neural network operators with sigmoidal functions, Journal of Computational and Applied Mathematics 453 (2025) 16 pp.
						
						
						
						\bibitem{Zhou3}D. X. Zhou, Universality of deep convolutional neural networks, Applied and Computational Harmonic Analysis  48(2) (2020) 787-794.
						
						%%%%%%%%%%%%%%%%%%555555
						
				
						
						
				
						
						
				
					
				
						
								\end{thebibliography}
				\end{document}